\newtheorem{thm}{Theorem}[section]
\newtheorem{lem}[thm]{Lemma}
\newtheorem{que}{Question}
\newtheorem{thmx}{Theorem}
\theoremstyle{definition}
\newtheorem{defn}[thm]{Definition}
\newtheorem{rem}[thm]{Remark}
\numberwithin{equation}{section}
\newcommand{\norm}[1]{\Vert#1\Vert}
\newcommand{\Norm}[1]{\left\Vert#1\right\Vert}
\newcommand{\na}{\nabla}
\newcommand{\pa}{\partial}
\newcommand{\lec}{\lesssim}
\newcommand{\td}{\tilde}
\renewcommand{\div}{\operatorname{div}}
\newcommand{\curl}{\operatorname{curl}}
\newcommand{\dist}{\operatorname{dist}}
\newcommand\al{\alpha}
\newcommand\be{\beta}
\newcommand\de{\delta}
\newcommand\De{\Delta}
\newcommand{\ga}{\gamma}
\newcommand\ep{\epsilon}
\newcommand\e {\varepsilon}
\newcommand\ka{\kappa}
\newcommand{\la}{\lambda}
\newcommand\ph{\varphi}
\renewcommand{\th}{\theta}
\newcommand{\om}{\omega}
\newcommand{\Om}{\Omega}
\newcommand{\si}{\sigma}
\newcommand{\R}{\mathbb{R}}
\newcommand{\N}{\mathbb{N}}
\newcommand{\supp}{\operatorname{supp}}
\newcommand{\I}{\textrm{Id}}
\newcommand{\loc}{{\text{loc}}}
\newcommand{\uloc}{\text{uloc}}
\renewcommand{\d}{\text{d}}
\newcommand{\ac}[1]{\accentset{\circ}#1}
\def\dint{\,\ThisStyle{\ensurestackMath{%
  \stackinset{c}{.2\LMpt}{c}{.5\LMpt}{\SavedStyle-}{\SavedStyle\phantom{\int}}}%
  \setbox0=\hbox{$\SavedStyle\int\,$}\kern-\wd0}\int}
\def\diint{\,\ThisStyle{\ensurestackMath{%
  \stackinset{c}{.2\LMpt}{c}{.5\LMpt}{\SavedStyle$-----$}{\SavedStyle\phantom{\iint}}}%
  \setbox0=\hbox{$\SavedStyle\iint\,$}\kern-\wd0}\iint}  
\title{The role of the pressure in the regularity theory\\ for the Navier-Stokes equations}
\author{Hyunju Kwon}
\thanks{School of Mathematics, Institute for Advanced Study, Princeton, NJ 08540, USA., hkwon@ias.edu.}
\begin{document}
\maketitle
\begin{abstract}
We first show the equivalence of two classes of generalized suitable weak solutions to the 3D incompressible Navier-Stokes equations allowing distributional pressure, the class of dissipative weak solutions and local suitable weak solutions. Then, an $\varepsilon$-regularity criterion for dissipative weak solutions follows from that for local suitable weak solutions. We relax the $\varepsilon$-regularity criterion with a new approach using a local version of Leray projection operator. As an application of the approach, we obtain the short-time regularity result on a bounded domain for dissipative solutions.
\end{abstract}

\section{Introduction}
Consider the three-dimensional incompressible Navier-Stokes equations 
\begin{align}\label{NS}\tag{NS}
\begin{cases}
\pa_t u + (u\cdot \na ) u + \na p = \De u \\
\div u =0
\end{cases}
\end{align}
on $(0,T)\times \R^3$. The equations describe the flow of incompressible viscous fluids and have two unknown functions $u:(0,T)\times \R^3 \to \R^3$ and $p:(0,T)\times \R^3 \to \R$, which represent the velocity of the fluid and pressure, respectively. This equations have an invariance under the scaling
\begin{align*}
u_\la(t,x):= \la u(\la^2 t, \la x), \quad
p_\la(t.x):= \la^2 p(\la^2 t, \la x), \quad \forall \la>0,
\end{align*}
which plays an important role in the regularity theory.

For any given initial data with finite kinetic energy, the existence of a weak solution to the Navier-Stokes equations for all time dates back to Leray \cite{Leray} (See also Hopf \cite{Hopf} for smooth bounded domain cases). The constructed solutions  satisfy the energy inequality:
\begin{align}\label{EI}
\int_{\R^3} |u(t,x)|^2 \d x + \int_0^t \int_{\R^3} |\na u(\tau, x)|^2 \d x \d \tau \leq \int_{\R^3} |u(0,x)|^2 \d x , \quad \forall t \geq 0,
\end{align}
so a weak solution satisfying the energy inequality is called a {\it Leray-Hopf} weak solution. The uniqueness and regularity of Leray-Hopf weak solutions remain open. Even for smooth initial data with sufficiently fast decay at spatial infinity, whether the solution stays smooth for all time is an outstanding open problem.  

To better understand the regularity, conditional regularity of a {\it weak solution} has been widely investigated, which tells us what is happening near a {\it singular} point. One of the classical questions in the regularity theory is about the interior regularity (see for example \cite{SeSv18}):
\begin{que}[Interior regularity] For the Navier-Stokes equations on $Q_1$, what are minimal conditions on a weak solution to make the origin regular?
\end{que}
\noindent Here, we denote a parabolic cylinder by $Q_r:=(-r^2,0)\times B_r\subset \R\times \R^3$, where $B_r$ is the Euclidean ball centred at the origin with the radius $r>0$. Due to the translation and scale invariance of the Navier-Stokes equations, one can work on $Q_1$ without loss of generality. A regular point is defined in the sense of Caffarelli-Kohn-Nirenberg \cite{CKN}: the origin is regular if $v\in L^\infty(Q_r)$ for some $r>0$. If a point is not regular, we call it as a singular point. 

There are many important results regarding to the interior regularty. For weak solutions, Serrin \cite{Serrin62} showed the local $L^\infty$-boundedness under the assumption $u\in L^r_tL^m_x(Q_1)$ for $(r,m)$ in the subcritical range $\frac2r +\frac3m < 1$. After that, Struwe \cite{Str88} and Takahashi \cite{Tak90} extended the result to the critical range, the equality case, when $m\in (3,\infty]$.
These results work even with distributional pressure, but it is not known whether a generic weak solution satisfies the assumption. Indeed, by interpolation, a weak solution satisfies $u\in L^r_tL^m_x(Q_1)$ for $(r,m)$ in the supercritical range $ \frac2r +\frac3m \ge \frac32$. To get around this issue, Scheffer \cite{Sch77} introduced a suitable weak solution: a weak solution $(u,p)$ satisfying $p\in L^\frac32(Q_1)$ and
the  local energy inequality, which is a local analogue of the energy inequality \eqref{EI}. Building upon \cite{Sch77, Sch76}, Caffarelli, Kohn, and Nirenberg \cite{CKN} obtained an $\e$-regularity regularity criterion for suitable weak solutions and, as a consequence, proved that the set of all singular points has one-dimensional parabolic Hausdorff measure zero. For different approaches, see also \cite{Lin98, LaSe99, Vasseur}. The following theorem is a version of $\e$-regularity criteria in \cite{NRS96, Lin98, LaSe99}. 

\begin{thmx}\label{thm:origin} There exist universal positive constants $\e$ and $C$ such that for any suitable weak solution $(u,p)$ to \eqref{NS} in $Q_1$ satisfying 
\begin{align*}
\norm{u}_{L^3(Q_1)} + \norm{p}_{L^\frac32(Q_1)} \le \e,
\end{align*}
we have
\begin{align*}
\norm{u}_{C^{\al}_{\text{par}}(Q_{1/2})} \le C
\end{align*}
for some $\al\in (0,1)$. 
\end{thmx}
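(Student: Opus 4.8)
\noindent The plan is to run a blow-up (compactness) argument: first establish geometric decay of a scale-invariant excess at every base point near the origin, then upgrade this to local boundedness of $u$, and finally pass to H\"older continuity via the subcritical regularity results recalled above.

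\emph{Reduction.} For $z_0=(t_0,x_0)$ and $0<r\le\tfrac14$ I would set $Q_r(z_0)=(t_0-r^2,t_0)\times B_r(x_0)$ and
\[
\mathcal E(r,z_0):=\frac1{r^2}\int_{Q_r(z_0)}\bigl|u-\langle u\rangle_{Q_r(z_0)}\bigr|^3\,dz+\frac1{r^2}\int_{Q_r(z_0)}\bigl|p-\langle p\rangle_{B_r(x_0)}\bigr|^{3/2}\,dz,
\]
where $\langle\cdot\rangle$ denotes the average over the indicated set (over $B_r(x_0)$ at fixed time in the pressure term); $\mathcal E$ is invariant under the Navier--Stokes scaling. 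The crux is the claim that there are universal $\varepsilon_1>0$, $\theta\in(0,\tfrac14)$ and $\mu>0$ such that, for every suitable weak solution $(u,p)$ on $Q_1$, $\mathcal E(1,0)\le\varepsilon_1$ forces $\mathcal E(\theta,0)\le\theta^{\mu}\mathcal E(1,0)$. Translation and scaling promote this to $\mathcal E(\theta R,z_0)\le\theta^{\mu}\mathcal E(R,z_0)$ whenever $\mathcal E(R,z_0)\le\varepsilon_1$ and $Q_R(z_0)\subset Q_1$; since $\mathcal E(\tfrac14,z_0)\lesssim\|u\|_{L^3(Q_1)}^3+\|p\|_{L^{3/2}(Q_1)}^{3/2}\lesssim\varepsilon^{3/2}$ for $z_0\in Q_{1/2}$ (using $\|p-\langle p\rangle_B\|_{L^{3/2}(B)}\le2\|p\|_{L^{3/2}(B)}$), taking $\varepsilon$ small and iterating in the scale yields $\mathcal E(r,z_0)\lesssim r^{\mu}$, hence $\mathcal E(r,z_0)\le\varepsilon_1$ at every scale $0<r<\tfrac14$, uniformly for $z_0\in Q_{1/2}$.

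\emph{The one-step estimate.} I would prove the claim by contradiction: fix $\theta,\mu$ to be chosen from the linear theory and suppose there are suitable weak solutions $(u_k,p_k)$ on $Q_1$ with $d_k:=\mathcal E(1,0)[(u_k,p_k)]\to0$ but $\mathcal E(\theta,0)[(u_k,p_k)]>\theta^{\mu}d_k$ (the case $d_k=0$ is trivial). Renormalising $v_k:=d_k^{-1/3}(u_k-\langle u_k\rangle_{Q_1})$ and $q_k:=d_k^{-2/3}(p_k-\langle p_k\rangle_{B_1})$ gives $\mathcal E(1,0)[(v_k,q_k)]=1$, $\langle v_k\rangle_{Q_1}=0$, $\langle q_k\rangle_{B_1}\equiv0$, and
\[
\pa_t v_k-\De v_k+d_k^{1/3}\bigl((v_k\cdot\na)v_k+\na q_k\bigr)=0,\qquad\div v_k=0.
\]
The rescaled local energy inequality --- whose nonlinear and pressure contributions now carry a factor $d_k^{1/3}\to0$ --- bounds $v_k$ uniformly in $L^\infty_tL^2_x\cap L^2_tH^1_x$ on interior cylinders; the equation then bounds $\pa_t v_k$ in a negative-order space, and Aubin--Lions gives, along a subsequence, $v_k\to v$ strongly in $L^3_{\loc}$ with $v$ a divergence-free caloric vector field, hence smooth in the interior with $\|v\|_{C^2(Q_{1/4})}\lesssim\|v\|_{L^3(Q_{1/2})}\le1$. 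For the pressure I would use the interior identity $-\De q_k=\pa_i\pa_j(v_k^iv_k^j)$ to split $q_k=q_k^{(1)}+q_k^{(2)}$ with $-\De q_k^{(1)}=\pa_i\pa_j(\chi\,v_k^iv_k^j)$ on $\R^3$ for a fixed $\chi\in C_c^\infty(B_{3/4})$ with $\chi\equiv1$ on $B_{1/2}$: Calder\'on--Zygmund bounds $q_k^{(1)}$ in $L^{3/2}$ by $\|v_k\|_{L^3}^2\lesssim1$ and (since $v_k\otimes v_k\to v\otimes v$ in $L^{3/2}_{\loc}$) yields $q_k^{(1)}\to q^{(1)}$ strongly in $L^{3/2}_{\loc}$, while $q_k^{(2)}$ is spatially harmonic on $B_{1/2}$ and bounded in $L^{3/2}$, so for $\theta<\tfrac14$ its spatial oscillation over $B_\theta$ is $\lesssim\theta\,\|q_k^{(2)}(t)\|_{L^{3/2}(B_{1/2})}$. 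Evaluating $\mathcal E(\theta,0)[(v_k,q_k)]$ and passing to the limit where needed, the velocity part tends to $\tfrac1{\theta^2}\int_{Q_\theta}|v-\langle v\rangle_{Q_\theta}|^3\lesssim\theta^6$, the $q^{(1)}$-part to $O(\theta^{9/2})$, and the $q^{(2)}$-part is $O(\theta^{5/2})$ uniformly in $k$; hence $\limsup_k\mathcal E(\theta,0)[(v_k,q_k)]\le C_*\theta^{5/2}$. Taking any $\mu\in(0,\tfrac52)$ and then $\theta$ so small that $C_*\theta^{5/2}<\theta^{\mu}$ contradicts $\mathcal E(\theta,0)[(v_k,q_k)]>\theta^{\mu}$ for large $k$.

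\emph{From decay to H\"older; the main obstacle.} Since $\mu<\tfrac52<3$, the Morrey-type decay $\mathcal E(r,z_0)\lesssim r^{\mu}$ does \emph{not} by itself give $u\in C^{\al}_{\text{par}}$ through Campanato's characterisation --- the low time-integrability of the pressure is precisely what blocks a one-shot iteration, and this is where I expect the real work to lie. Instead I would feed the uniform smallness back into the local energy inequality, obtaining $\tfrac1r\sup_t\int_{B_r(x_0)}|u|^2+\tfrac1r\int_{Q_r(z_0)}|\na u|^2\to0$ as $r\to0$, uniformly for $z_0\in Q_{1/2}$, and then run a De Giorgi/Moser-type iteration for $\pa_t u-\De u=-\div(u\otimes u)-\na p$ --- using this smallness and the parabolic Sobolev inequality to raise the integrability of $u$ step by step --- to reach $u\in L^\infty$ on a slightly smaller cylinder. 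Once $u$ is bounded, the interior pressure identity and Calder\'on--Zygmund put $p$ in $L^q_{\loc}$ for every $q<\infty$, so $\div(u\otimes u+p\,\I)\in L^q_{\loc}$; parabolic regularity for $\pa_t u-\De u=\div F$ then gives $\na u\in L^q_{\loc}$ for every $q<\infty$, and the parabolic Sobolev embedding upgrades this to $u\in C^{\al}_{\text{par}}(Q_{1/2})$ for some $\al\in(0,1)$ with the asserted bound --- the subcritical regularity phenomenon of Serrin, Struwe and Takahashi recalled above. A structural point used throughout is that the compactness of $(v_k)$ rests on the \emph{local} energy inequality built into the notion of a suitable weak solution.
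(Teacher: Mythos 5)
Your overall strategy (blow-up/compactness for a one-step decay, then iteration over scales) is the same family of argument the paper uses for its generalization, Theorem \ref{thm:ep.reg} via Lemma \ref{lem:osc} (Theorem A itself is quoted from Lin/Ladyzhenskaya--Seregin/NRS), but as written your proof has two genuine gaps. First, in the blow-up step the renormalized equation is wrong: writing $u_k=\langle u_k\rangle_{Q_1}+d_k^{1/3}v_k$ produces the constant-drift term $(\langle u_k\rangle_{Q_1}\cdot\na)v_k$, which carries no factor $d_k^{1/3}$ and which you have dropped. Since your one-step lemma is stated for arbitrary suitable weak solutions with no hypothesis on $\langle u\rangle_{Q_1}$, this coefficient is not controlled by $\mathcal E(1,0)$ along the contradicting sequence; the local energy inequality for $v_k$ then contains the uncontrolled transport term, so the uniform $L^\infty_tL^2_x\cap L^2_tH^1_x$ bound, the $\pa_t v_k$ bound needed for Aubin--Lions, and the caloric limit all fail as stated. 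The standard repair is to assume $|\langle u\rangle_{Q_1}|\le 1$, keep the drift (the limit then solves a Stokes system with constant drift, exactly the $\kappa$-terms in \eqref{Stokes.V}), and then \emph{propagate} the bound on the averages through the scale iteration by a telescoping sum --- this is precisely the bookkeeping in Lemma \ref{lem:osc} ($|(v)_{Q_1}|\le1$) and in the induction Claim inside the proof of Theorem \ref{thm:ep.reg}, and it is absent from your plan. With your CKN-normalized excess and $\mu<3$ the averages $\langle u\rangle_{Q_{\theta^k}(z_0)}$ may grow geometrically (like $\theta^{-k(3-\mu)/3}$), so you must at least verify that the rescaled drift $\theta^k\langle u\rangle_{Q_{\theta^k}(z_0)}$ stays admissible before re-applying the one-step lemma; nothing in the proposal addresses this.

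Second, the endgame is not closed. The obstruction you identify ($\mu<\tfrac52$ is too weak for Campanato) is an artifact of measuring the excess in the CKN normalization: the paper's quantity puts an extra factor $\rho$ on the pressure oscillation and measures the velocity oscillation in mean-average form, so the one-step decay iterates to a genuine Campanato decay of the velocity at every base point in $Q_{1/2}$ and $C^\al_{\text{par}}$ follows at once from Campanato's lemma --- no De Giorgi stage is needed. Your substitute (smallness of the local energy at all scales, then a De Giorgi/Moser iteration to $L^\infty$, then a Serrin-type bootstrap) is essentially a second $\e$-regularity theorem, the CKN/Vasseur route, asserted without proof; it is the analytic heart of the statement and cannot be left as a one-line step. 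It also contains an inaccuracy: once $u\in L^\infty_{\loc}$, the pressure decomposition only yields $p\in L^{3/2}_tL^q_{x}$ locally (the $x$-harmonic part inherits no better time integrability than $L^{3/2}$), not $p\in L^q_{\loc}$ in space-time, so the final parabolic bootstrap must be run in mixed norms. Either switch to the Campanato-weighted excess and close the argument as in Lemma \ref{lem:osc} and Theorem \ref{thm:ep.reg}, or supply the De Giorgi iteration in full; as written the proof does not reach the stated H\"older bound.
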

\noindent 
In the developments of \cite{GuPh17, HWZ, DoWa21}, the smallness assumption in the classical theorem has been relaxed; in particular, in \cite{HWZ, DoWa21}, for any $(r,m)\in (1,\infty)\times (3/2,\infty)$ with $2/r+3/m<2$, if a suitable weak solution satisfies
\begin{align*}
\norm{u}_{L^r_tL^m_x(Q_1)} + \norm{p}_{L^1(Q_1)} \le \e(r,m),
\end{align*}
for some $\e(r,m)>0$, then $u$ is regular in $\overline{Q_{1/2}}$. For other types of $\e$-regularity criteria for suitable weak solutions, see also \cite{GKT07, DoWa21}. For the improvement of the partial regularity result by a logarithmic factor, see \cite{ChLe, ChYa, RWW}.

The paper particularly concerns the role of pressure in the $\e$-regularity criteria. Observe that the smallness assumption on pressure in Theorem \ref{thm:origin} cannot be removed, keeping both universal constants $\e$ and $C$; in such case, one can construct a counter-example from a class of explicit solutions, called parasitic solutions, suggested by Serrin \cite{Serrin62}, 
\begin{align}\label{para.sol}
u(t,x) = \Phi(t) \na H(x), \quad p(t,x) = -\Phi'(t) H(x) - \frac 12 |\Phi(t)\na H(x)|^2  
\end{align}
where $H$ is harmonic on $B_1$. Also, considering a bounded but discontinuous function $\Phi$ of time in these examples, one cannot expect the improvement of the time regularity of a weak solution $u$ under no integrability assumption on the pressure. The parasitic solutions $u$, on the other hand, are harmonic and hence smooth in space. Indeed, even with distributional pressure, $\e$-regularity criteria are still valid, if one allow the local $L^\infty$-bound of the velocity to depend on $\norm{u}_{L^\infty_tL^1_x(Q_1)}$. Since the definition of suitable weak solutions requires integrability of pressure, we first introduce two classes of solutions which generalize suitable weak solutions: {\it dissipative weak solutions} and {\it local suitable weak solutions.}

Arising in the study of turbulent flows at high Reynolds number, the class of dissipative weak solutions on periodic domains is first introduced by Duchon-Robert in \cite{DR99}. The following definition is a version on bounded domains in \cite{CLRM18}.
\begin{defn}[Dissipative weak solution]\label{def:diss} We call $(u,p)$ is a {\it dissipative weak solution} to \eqref{NS} on a domain $\mathcal{O} = (a,b)\times B_R$, $a<b\in \R$ and $R>0$ if 
\begin{enumerate}[(1)]
\item $u\in L^\infty_tL^2_x(\mathcal{O}) \cap L^2_t\dot{H}^1_x(\mathcal{O})$ and $p\in \mathcal{D}'(\mathcal{O})$.
\item $(u,p)$ solves \eqref{NS} in distribution sense: $u$ is divergence free in distribution sense and $(u,p)$ satisfies
\begin{align}\label{weak.NS}
\iint u \cdot (\pa_t \zeta + \De \zeta ) + u\otimes u : \na \zeta + p\cdot \na \zeta \d x \d t =0, \qquad \forall \zeta\in C_c^\infty(\mathcal{O};\R^3).
\end{align}
\item $(u,p)$ satisfies the local energy inequality in distribution sense:
\begin{align}\label{LEI0}
\iint |\na u|^2\xi \d x \d t 
\leq \iint \frac{|u|^2}2 (\pa_t \xi + \De \xi) + \frac{|u|^2}2 (u\cdot \na) \xi \d x \d t + \langle \div (pu) \rangle(\xi) 
\end{align}
for any non-negative function $\xi\in C_c^\infty(\mathcal{O};\R)$. Here, $\langle \div (pu) \rangle\in \mathcal D'(\mathcal{O})$ is defined by
\begin{align}\label{divpu}
\langle \div (pu)\rangle (\xi) :=-\lim_{\ep \to 0^+}\lim_{\al \to 0^+} \int (p\ast \psi_{\al,\ep})(u\ast \psi_{\al,\ep}\cdot \na) \xi \d x \d t ,
\end{align}
and the limit is independent of a space-time mollifier (see \cite[Proposition 1.1]{CLRM18}) of the form $\psi_{\al,\ep}(t,x) := \be_\al(t) \ga_\ep(x) $, where $\be_\al(t) := {\al}^{-1} \be(\al^{-1}t)$, $\ga_\ep(x) := \ep^{-3} \ga(\ep^{-1} x)$ for $\al>0$, $\ep>0$ and two functions $\be\in C_c^\infty(\R)$ and $\ga\in C_c^\infty(\R^3)$ with $\int_{\R} \be \,\d t = \int_{\R^3} \ga \,\d x =1$. (Note that each $\xi$ has $\supp(\xi)\Subset \mathcal{O}$, so that the integral in the right hand side of \eqref{divpu} is well-defined for sufficiently small $\al>0$ and $\ep>0$.)
\end{enumerate}
\end{defn}
\noindent Obviously, any suitable weak solution is a dissipative solution, but the converse is not true due to the Serrin's examples \eqref{para.sol}. Also, on the periodic domain, the solutions constructed by Leray are dissipative solutions \cite{DR99}. For these solutions, a partial regularity result is recently obtained by Chamorro, Lemari{\'e}-Rieusset, and Mayoufi \cite{CLRM18}. 
\medskip

The class of local suitable weak solutions, on the other hand, are introduced by Wolf \cite{Wolf15} in order to remove the assumption on pressure in $\e$-criteria for suitable weak solutions. 
\begin{defn}[local suitable weak solution]\label{defn:LSWS} We call $u$ is a {\it local suitable weak solution} to \eqref{NS} on a domain $\mathcal{O} = (a,b)\times B_R$, $a<b\in \R$ and $R>0$ if 
\begin{enumerate}[(1)]
\item $u\in L^\infty_tL^2_x(\mathcal{O})\cap L^2_t\dot{H}^1_x(\mathcal{O})$
\item $u$ solves the weak form of \eqref{NS}: $u$ is divergence-free in distribution sense and satisfies
\begin{align*}
\iint_{\R\times \R^3} u\cdot (\pa_t \zeta + \De \zeta ) + u\otimes u :\na \zeta =0, 
\end{align*}
for any divergence-free $ \zeta \in C_{c}^\infty(\mathcal{O};\R^3)$.
\item It satisfies the local energy inequality for $v=u+ \na p_{h, B_R}$ in distribution sense:
\begin{align*}
(\pa_t -\De)\frac{|v|^2}2 + |\na v|^2 + \div \left(\frac{|v|^2u}2\right) - v\cdot \div (\na p_{h, B_R} \otimes u) + \div(p_{o, B_R} v) \leq 0.
\end{align*}
\end{enumerate}
where $\na p_{h, B_R} := - E_{B_R}^*(u)$ and $\na p_{o, B_R} = - E_{B_R}^* (\div(u\otimes u)) + E_{B_R}^*(\De u)$. 
\end{defn}
\noindent Here, a bounded linear operator $E_{\Om}^*$ on $W^{-1,q}(\Om)$\footnote{The operator $E_{\Om}^*$ can be extended to the one on  $L^s(a,b;W^{1,-q}(\Om))$, $s\in [1,\infty]$, defining $E_{\Om}^*(f)(t):= E_{\Om}^*(f(t))$.}, for a bounded Lipschitz domain $\Om\subset \R^3$ and $q\in (1,\infty)$, is defined by $E_\Om^*(f) = \na \pi$ where $\pi$ is obtained as a unique solution $(w,\pi)\in W^{1,q}_{0}(\Om) \times L^q(\Om)$ to the steady Stokes system\footnote{For the theory of steady Stokes system, see \cite{GSS94, BrSh95}. For further discussion about the operator $E_{\Om}^*$, see \cite{Wolf15, ChWo17}.}
\begin{equation*}
-\De w + \na \pi = f, \quad \div w=0 \quad \text{on } \Om, \qquad w|_{\pa \Om} = 0, \quad \int_{\Om} \pi \, \d x =0.
\end{equation*}
The definition of local suitable weak solutions is motivated by the local representation of $\na p =\na ( p_{o, B_R}+ \pa_t p_{h, B_R})$ in distribution sense, whenever $(u,p)$ solves \eqref{NS} in distribution sense. We remark that these solutions include suitable weak solutions and parasitic solutions. Also, the $\e$-regularity criterion has been further improved in \cite{ChWo17, JWZ, WWZ}; in particular the smallness assumption was relaxed to 
\begin{align*}
\norm{u}_{L^m(Q_1)}\leq \e(m), \qquad \text{for } m>5/2 
\end{align*}
in \cite{WWZ}. The partial regularity result \cite{CKN} is extended to local suitable weak solutions in \cite{Wolf15} and improved by a logarithmic factor in \cite{WWZ}.  
\medskip

In this paper, we first show that these two classes of  generalized suitable weak solutions are equivalent.
\begin{thm}\label{lem:equi} If $u$ is a local suitable weak solution to \eqref{NS} on $(0,T)\times B_R$, $T,R\in(0,\infty) $, then $(u,p_{o, B_R}+ \pa_t p_{h, B_R})$ is a dissipative solution. Conversely, if $(u,p)$ is a dissipative solution, then $u$ is a local suitable weak solution. 
\end{thm}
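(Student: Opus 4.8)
The plan is to isolate two facts and then combine them. The first is an \emph{identification of the pressure}: if $u$ is divergence-free and solves the weak form of \eqref{NS} against divergence-free test fields on $\mathcal{O}:=(0,T)\times B_R$, then every $p\in\mathcal D'(\mathcal O)$ with $\pa_t u-\De u+\div(u\otimes u)=-\na p$ has the form $p=p_{o,B_R}+\pa_t p_{h,B_R}+c(t)$ with $c\in\mathcal D'(0,T)$. The second is an \emph{identity of the two energy expressions}: for every $\xi\in C^\infty_c(\mathcal O)$, the distribution governing the inequality in Definition~\ref{defn:LSWS}(3), written for $v=u+\na p_{h,B_R}$, and the one governing \eqref{LEI0}, written for $(u,\,p_{o,B_R}+\pa_t p_{h,B_R})$, take the same value at $\xi$ (so that the two inequalities, together with their nonnegative defects, coincide). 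Granting these, the theorem follows quickly in either direction: the divergence-free weak form in Definition~\ref{defn:LSWS}(2) is exactly the restriction of \eqref{weak.NS} to divergence-free $\zeta$ (on which the pressure term vanishes), and conversely de~Rham supplies a pressure for which the first fact applies; moreover adding $c(t)$ to the pressure changes neither \eqref{weak.NS} (since $\int_{B_R}\div_x\zeta\,\d x=0$ for $\zeta\in C^\infty_c(\mathcal O)$) nor $\langle\div(pu)\rangle(\xi)$ (since each $u\ast\psi_{\al,\ep}$ is divergence-free in $x$, so $\int_{B_R}(u\ast\psi_{\al,\ep})\cdot\na_x\xi\,\d x=0$).

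For the first fact: by interpolation $u\in L^{10/3}(\mathcal O)$, so $\div(u\otimes u)\in L^{5/3}_tW^{-1,5/3}_x$ and $\De u\in L^2_tH^{-1}_x\hookrightarrow L^{5/3}_tW^{-1,5/3}_x$, and hence $\na p_{o,B_R}=E^*_{B_R}(\De u-\div(u\otimes u))$ and $\na p_{h,B_R}=-E^*_{B_R}(u)$ are well defined; the steady Stokes theory shows $p_{h,B_R}$ is harmonic in $x$ (because $\div u=0$) with $\na p_{h,B_R}\in L^\infty_tL^2_x$, $\na^2 p_{h,B_R}\in L^2_tL^2_x$, so that $v=u+\na p_{h,B_R}\in L^\infty_tL^2_x\cap L^2_tH^1_x$ is divergence-free. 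Given such a $p$, the right side of $\na p=\De u-\div(u\otimes u)-\pa_t u$ lies in $L^{5/3}_tW^{-1,5/3}_x$, so by the Ne\v{c}as/de~Rham lemma $p=\hat p+c(t)$ with $\hat p\in L^{5/3}(\mathcal O)$; since $E^*_{B_R}$ is linear, commutes with $\pa_t$, and fixes gradients of $L^{5/3}$ functions, applying it to $\na\hat p$ yields $\na\hat p=\na p_{o,B_R}+\pa_t\na p_{h,B_R}$, hence $\hat p=p_{o,B_R}+\pa_t p_{h,B_R}+\td c(t)$. (If only the divergence-free weak form is assumed, de~Rham still produces at least one admissible $p$.)

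For the second fact I would argue formally and then mollify. Using $\De p_{h,B_R}=0$ and $p=p_{o,B_R}+\pa_t p_{h,B_R}\ (\mathrm{mod}\ c(t))$, substituting $u=v-\na p_{h,B_R}$ into the equation for $u$ gives $\pa_t v-\De v+(u\cdot\na)v=\div(\na p_{h,B_R}\otimes u)-\na p_{o,B_R}$; multiplying by $v$ (with $\div u=\div v=0$) reproduces the left side of Definition~\ref{defn:LSWS}(3), while multiplying the equation for $u$ by $u$ reproduces \eqref{LEI0}, the pressure contribution there appearing formally as $\div(pu)$ with rigorous meaning $\langle\div(pu)\rangle$. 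To match the two resulting distributions at $\xi$ I would expand $|v|^2=|u|^2+2u\cdot\na p_{h,B_R}+|\na p_{h,B_R}|^2$ and $|\na v|^2=|\na u|^2+2\na u:\na^2 p_{h,B_R}+|\na^2 p_{h,B_R}|^2$, integrate against $\xi$, and integrate by parts in $x$: the cross terms $\pm2\na u:\na^2 p_{h,B_R}$ cancel, the $|\na p_{h,B_R}|^2$-terms reorganize via $\De p_{h,B_R}=0$, and the remaining $\na p_{h,B_R}$-terms are matched against $\langle\div(pu)\rangle(\xi)=\langle\div(p_{o,B_R}u)\rangle(\xi)+\langle\div((\pa_t p_{h,B_R})u)\rangle(\xi)$; for the last term, $p_{h,B_R}$ being smooth in $x$ gives $\div_x((\pa_t p_{h,B_R})u)=u\cdot\pa_t\na p_{h,B_R}=\pa_t(u\cdot\na p_{h,B_R})-\na p_{h,B_R}\cdot\pa_t u$, into which one substitutes $\pa_t u=\De u-\div(u\otimes u)-\na p$. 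Carried out for the mollified fields and passed to the limit using the mollifier-independence of $\langle\div(pu)\rangle$ from \cite[Proposition~1.1]{CLRM18}, all the terms collect and cancel.

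The main obstacle is the second fact — making this distributional computation rigorous: justifying the repeated spatial integrations by parts for fields of only energy regularity, and, most delicately, inserting the equation for $\pa_t u$ \emph{inside} the mollified double limit defining $\langle\div(pu)\rangle(\xi)$, so that the mollifier-independent limit reproduces exactly the pressure terms $-v\cdot\div(\na p_{h,B_R}\otimes u)+\div(p_{o,B_R}v)$ of Definition~\ref{defn:LSWS}(3). The first fact is comparatively routine once the mapping properties of $E^*_{B_R}$ from \cite{Wolf15,ChWo17}, the Ne\v{c}as inequality, and the spatial harmonicity of $p_{h,B_R}$ are in hand.
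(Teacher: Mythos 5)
Your route differs from the paper's. The paper never expands $|v|^2$ in terms of $u$ and $\na p_{h,B_R}$: after invoking \cite[Lemma 2.4]{Wolf15} to write $\na p=\na(p_{o,B_R}+\pa_t p_{h,B_R})$, it tests the equation for $u$ against $u_{\al,\ep}\xi\ast\psi_{\al,\ep}$ and the rewritten equation for $v=u+\na p_{h,B_R}$ against $v_{\al,\ep}\xi\ast\psi_{\al,\ep}$, producing two mollified local energy identities each carrying a Duchon--Robert defect $\mu_\ep$, $\td\eta_\ep$ from the cubic nonlinearity. The whole equivalence then reduces to showing $\lim_{\ep\to0}(\mu_\ep-\td\eta_\ep)(\xi)=0$, accomplished through the commutator decomposition of \cite[Lemmas~3.7, 3.8]{CLRM18} together with the crucial regularity $\na p_{h,B_R}\in L^\infty_t\mathrm{Lip}_x$ on $\supp(\xi)$; the existence of the double limit $\langle\div(pu)\rangle(\xi)$ is a by-product of this comparison rather than something established independently.

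Your heuristic computation is in fact correct: the cross terms in $|\na v|^2$ cancel, the $|\na p_{h,B_R}|^2$ contributions reorganize via $\De p_{h,B_R}=0$, the transport terms collapse to $\div(u\otimes u)\cdot\na p_{h,B_R}$, and after inserting the equation for $\pa_t u$ the residual is exactly the formal identity $u\cdot\pa_t\na p_{h,B_R}=\div((\pa_t p_{h,B_R})u)$. But the obstacle you flag at the end is where the content of the theorem actually lives, and you leave it open. The Leibniz step $\pa_t(u\cdot\na p_{h,B_R})=\pa_t u\cdot\na p_{h,B_R}+u\cdot\pa_t\na p_{h,B_R}$ is not legitimate here: $\pa_t u$ has only negative spatial regularity and $\pa_t\na p_{h,B_R}$ is only a time-distribution, so neither summand is a well-defined pairing at energy regularity. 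Making this rigorous forces you to mollify \emph{before} expanding, and once you do, the mollified cubic terms produce precisely the defects $\mu_\ep,\td\eta_\ep$; closing the argument then requires the commutator estimate of \cite[Lemmas~3.7, 3.8]{CLRM18}, not merely the mollifier-independence statement of \cite[Proposition~1.1]{CLRM18} you cite. Without that ingredient your argument for the second fact does not close, and with it you have essentially reproduced the paper's proof by a longer path.
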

\noindent As a consequence, the results regarding local suitable weak solutions work for dissipative solutions. In particular, an $\e$-regularity criterion with the smallness assumption on $\norm{u}_{L^m(Q_1)}$ for $m>5/2$ holds for dissipative solutions. 
In this context, we improve the result and, more importantly, provide a new proof.    
\begin{thm}\label{main.thm}
For any $r,m\in (2,\infty]$ with $2/r+ 3/m<2$, we can find $\e=\e(r,m)>0$ and a universal constant $C>0$ such that for any dissipative solution to \eqref{NS} on $Q_2$ satisfying
\begin{align}\label{small.u}
\norm{u}_{L^r_t L^m_x(Q_2)}  \leq \e,
\end{align}
we have a decomposition $u = v + h$ on $Q_1$ such that $h$ is a harmonic function on $Q_1$ with $\norm{\na^k h}_{L^\infty((-t_0,0)\times B_1)}\lec_k \norm{u}_{L^\infty(-t_0,0;L^1(B_2))}$ for any integer $k\geq 0$ and $t_0\in (0,1]$, and 
\begin{align*}
\norm{v}_{C^\al_{\text{par}}(Q_{1/2})}\leq C
\end{align*}
for some $\al\in (0,1/2-1/r)$. In particular,
\begin{align*}
\norm{u}_{L^\infty_tC^\al_x(Q_{1/2})} 
\leq C + \norm{u}_{L^\infty(-1/4,0;L^1(B_2))}
\end{align*}
\end{thm}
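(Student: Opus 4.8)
The plan is to transfer the $\varepsilon$-regularity statement from local suitable weak solutions—for which, by the cited results of \cite{WWZ, ChWo17, Wolf15}, we already have an $\varepsilon$-criterion under $\norm{u}_{L^m(Q_1)}\le\varepsilon(m)$ for $m>5/2$—to dissipative solutions via the equivalence in Theorem \ref{lem:equi}, and then sharpen the hypothesis to mixed-norm smallness $\norm{u}_{L^r_tL^m_x(Q_2)}\le\varepsilon$ with $2/r+3/m<2$ by running a compactness/iteration argument with the pressure handled through the local Leray projection $E^*_{B_R}$. The decomposition $u=v+h$ is forced on us by the structure of Definition \ref{defn:LSWS}: one takes $h$ to be (minus) a primitive in time of $\na p_{h,B_1}=-E^*_{B_1}(u)$ so that $v=u-h$ satisfies the genuine local energy inequality in Definition \ref{defn:LSWS}(3), and the harmonicity and the bound $\norm{\na^k h}_{L^\infty}\lec_k\norm{u}_{L^\infty_tL^1_x(B_2)}$ come from interior elliptic estimates for the Stokes problem defining $E^*_{B_1}$: $\na p_{h,B_1}$ is a gradient of a harmonic function, and its pointwise size on the smaller cylinder is controlled by the $L^1$ norm of the data on the larger ball by the mean value property applied to $\na^{k+1}$ of the harmonic potential.

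\medskip

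First I would fix $r,m$ and pass to the equivalent local suitable weak solution $u$ on $Q_2$; set $\na p_{h}:=-E^*_{B_1}(u)$ and $\na p_{o}:=-E^*_{B_1}(\div(u\otimes u))+E^*_{B_1}(\De u)$, and define $h$ by solving $\pa_t h = \na p_h$ with zero initial trace at $t=-1$ (or, more robustly, $h(t):=-\int_{-1}^t E^*_{B_1}(u)(s)\,ds$), so that $v:=u-h$ solves the Stokes-type system with pressure $p_o$ and satisfies the differential local energy inequality of Definition \ref{defn:LSWS}(3) with $v$ in the dissipative role. The harmonic bounds for $h$ follow as above; in particular $h\in L^\infty_{t,x}$ on $Q_1$ with the stated $L^1$-dependence, and $\na^k h$ is bounded for every $k$, so \emph{all} regularity of $u$ on $Q_{1/2}$ reduces to regularity of $v$. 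Next I would establish that $v$ inherits the mixed-norm smallness: since $\norm{h}_{L^\infty(Q_1)}\lec\norm{u}_{L^\infty_tL^1_x(B_2)}$ but this is \emph{not} small, the honest statement is that $v$ solves a forced Stokes system whose drift and pressure are controlled by $\norm{u}_{L^r_tL^m_x}$ plus lower-order harmonic terms; one then runs the standard $\varepsilon$-regularity iteration (decay of scaled energy quantities $\int|\na v|^2$, $\int|v|^3$, and a Stokes pressure term estimated via $E^*$) on $v$, using that the quadratic nonlinearity $u\otimes u=(v+h)\otimes(v+h)$ produces the dangerous term $v\otimes v$ (small, quadratic) and cross terms $v\otimes h$ that are linear in $v$ and hence absorbable at small scales because $h$ is bounded, plus the harmless smooth term $h\otimes h$. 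The gain $\al\in(0,1/2-1/r)$ in parabolic Hölder regularity is the standard one coming from $L^r_t$ integrability of the forcing in the linear Stokes estimate (Linstead of full $L^\infty_t$ one loses $1/r$ in the time-Hölder exponent).

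\medskip

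\textbf{The main obstacle} I anticipate is twofold. First, the cross term $v\otimes h$ in the nonlinearity is only linear, not quadratic, in the small unknown $v$, so the naive Picard-type smallness argument does not close; one must exploit that at scale $\rho$ the contribution of $\na(v\otimes h)$ to the pressure and drift carries a factor $\rho$ from the boundedness of $h$ and the $C^\al$ (indeed smooth) regularity of $h$, which makes it a genuinely subcritical perturbation that can be absorbed by choosing the iteration scale small and using an iteration of the Caffarelli–Kohn–Nirenberg / Lin type rather than a fixed-point argument. Second, some care is needed because the local Leray projection $E^*_{B_1}$ is defined relative to a \emph{fixed} ball $B_1$, not to the shrinking balls $B_\rho$ of the blow-up, so the quantities $p_h,p_o$ do not transform cleanly under parabolic rescaling; the standard fix, as in \cite{Wolf15, WWZ}, is to re-expand $\na p = \na p_o + \pa_t\na p_h$ locally on each $B_\rho$, pick up the error from the difference of two Leray projections (which is again a gradient of a harmonic function, hence smooth and controlled by lower norms), and track it through the iteration as an additional "tail" term. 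Once $v\in C^\al_{\mathrm{par}}(Q_{1/2})$ with $\norm{v}\le C$ is obtained, the final displayed bound $\norm{u}_{L^\infty_tC^\al_x(Q_{1/2})}\le C+\norm{u}_{L^\infty(-1/4,0;L^1(B_2))}$ follows immediately by writing $u=v+h$ and using $\norm{h}_{C^\al_x}\lec\norm{\na h}_{L^\infty}\lec\norm{u}_{L^\infty_tL^1_x(B_2)}$ on the slightly smaller cylinder, and the conversion from $Q_1$ to $Q_2$ in the hypotheses is just to leave room for the interior elliptic estimates defining $h$ and for one application of Theorem \ref{lem:equi}.
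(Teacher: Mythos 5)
Your route (pass to Wolf's local suitable weak solution via Theorem \ref{lem:equi} and build the decomposition from $E^*_{B_R}$) is genuinely different from the paper's, but as written it has a concrete error at the very first step. You define $h$ as a time primitive, $h(t):=-\int_{-1}^t E^*_{B_1}(u)(s)\,\d s$, and claim $v=u-h$ satisfies the local energy inequality of Definition \ref{defn:LSWS}(3). That inequality is stated for $v=u+\na p_{h,B_R}$, i.e.\ the correct harmonic part is $h=-\na p_{h,B_R}$ itself, with no time integration; with your primitive, $\pa_t v=\pa_t u-\na p_h$ and the distributional term $\pa_t\na p_h$ is \emph{not} absorbed into $\pa_t v$, so the pressure you must handle is still $p_o+p_h+\pa_t p_h$ and the whole point of the decomposition is lost. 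The time primitive also destroys the bound you need in the statement: $h(t)$ then depends on $u$ on all of $(-1,t)$, so $\norm{\na^k h}_{L^\infty((-t_0,0)\times B_1)}$ cannot be controlled by $\norm{u}_{L^\infty(-t_0,0;L^1(B_2))}$ alone. Even after replacing $h$ by $-\na p_{h,B_2}$, the mean-value property only controls $\na^k\na p_h$ on $B_1$ by $\norm{\na p_h(t)}_{L^1(B_{3/2})}$, and passing to $\norm{u(t)}_{L^1(B_2)}$ requires the $W^{-1,q}$-boundedness of $E^*$ with $q<3/2$ together with $L^1\hookrightarrow W^{-1,q}$; this is fixable but is not what the mean-value argument you invoke gives.

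The second, and larger, gap is that the core analytic step is only asserted. Under the corrected Wolf decomposition, $v$ solves a perturbed system whose pressure is $p_{o,B_R}$, and $\na p_{o,B_R}$ contains $E^*_{B_R}(\De u)$, which is \emph{linear in $\na u$}: it is bounded by the energy but is not small under \eqref{small.u}, so the "standard CKN/Lin iteration" does not start from a small pressure, and your acknowledged problem that $E^*_{B_1}$ does not rescale is left with only a heuristic fix. This is exactly the difficulty the paper's proof is built to avoid: Lemma \ref{lem:decomp} uses the localized Leray projection $v=-\curl\De^{-1}(\ph\curl u)$, for which the associated pressure $q$ is genuinely quadratic, \eqref{est.q}, the force $f$ and the harmonic remainder $h=u-v$ are controlled by $L^1_x$-norms of $u$, \eqref{est.f}--\eqref{est.h}, and no $\na u$ enters; the mixed-norm $\e$-criterion is then proved in Theorem \ref{thm:ep.reg} via the oscillation Lemma \ref{lem:osc} (a compactness argument keeping $(v\cdot\na)h+(h\cdot\na)v$ as drift terms, which is the rigorous version of your "absorbable because $h$ is bounded" remark). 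To make your approach work you would have to supply a full mixed-norm $\e$-regularity proof adapted to Wolf's pressure decomposition at every scale (as in \cite{Wolf15, WWZ}), which is precisely the part your proposal leaves open; as it stands, the argument does not close.
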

\begin{rem} In the case of the Serrin's examples \eqref{para.sol}, we have $v=0$ and $h=u$. 
\end{rem}
\noindent Our approach, inspired by \cite{CLRM18}, is based on taking a local version of Leray projection operator, $-\curl\De^{-1}(\ph \curl \cdot)$ for some spatial cut-off $\ph$, to \eqref{NS}. In this way, one can remove the pressure term $\na p$ from the discussion. Then, a dissipative solution can be decomposed into the principal part $v= -\curl\De^{-1}(\ph \curl u)$ and harmonic part $h=u-v$. where $v$ is a suitable weak solution to perturbed Navier-Stokes equations with smooth perturbation $h$ and some smooth external force. Theorem \ref{main.thm} then follows from the suitable modification of the proof of $\e$-regularity criteria for suitable weak solutions.
We note that in \cite{CLRM18}, the perturbation terms $(v\cdot\na )h + (h\cdot\na) v$ are considered as a part of the external force, which deteriorates the regularity of force. 

Another application of the decomposition of the dissipative solution is the short-time regularity on a bounded domain. It is well-known that on a whole domain, if an initial datum $u_0\in L^p(\R^3)$ for $p\geq 3$ then a unique (mild) solution to the Navier-Stokes equation exists for a short-time $[0,T)$, which is smooth on $(0,T)$ \cite{Kato84, Leray}. Similar results are also known in the critical Besov spaces \cite{Planchon}, $BMO^{-1}(\R^3)$ \cite{KoTa}, and $L^3_\uloc(\R^3)$ \cite{MaTe}, where 
$\norm{u_0}_{L^p_\uloc(\R^3)} := \sup_{x_0\in \R^3} \norm{u_0}_{L^p(B_1(x_0))}$. 
One then may ask whether the smoothing is a local phenomenon; Jia-\v{S}ver\'ak \cite{JS14} first raised the following question.
\begin{que} If $u_0\in L^q(B_2)$ for some $q\geq 3$, then is an associated weak solution $u$ with the initial data $u_0$ regular in $(0,t_0)\times B_1$ for some time $t_0$?
\end{que}
\noindent Analogous answers to this question is not obvious because of the non-local nature of pressure. Indeed, if the initial data outside of the ball has very low regularity, one may not be possible to expect smoothing of the solution, as in the whole domain case. 

The short-time regularity of a {\it local Leray solution} on $[0,T)\times \R^3$ was obtained in the subcritical case $u_0\in L^q(B_{2})$, $q>3$, in \cite{JS14}, and in the critical case  $u_0\in L^3(B_{2})$ in \cite{KMT20, BP20}, when it satisfies additional global information
\begin{align*}
\norm{u_0}_{L^2_\uloc(\R^3)}^2 := \sup_{x_0\in \R^3} \norm{u_0}_{L^2(B_1(x_0))}^2 <+\infty, \quad
\lim_{|x_0|\to \infty} \norm{u_0}_{L^2(B_1(x_0))}^2=0.
\end{align*}
Indeed, the definition of local Leray solutions provides the representation of the pressure and its estimate controlled by the global information of initial data, so that the difficulty arising from the pressure can be resolved and the length of time interval $t_0$ depends on $\norm{u_0}_{L^2_\uloc(\R^3)}$. The result is further extended in \cite{KMTs20, KMTs21} 
to local energy solutions with initial data in local Morrey spaces. The results in \cite{KMT20, KMTs20}, on the other hand, are corollaries of the short-time regularity of suitable weak solutions, in which case $t_0$ depends on the size of the pressure in $L^2_tL^\frac32_x$. We remark that all the above-mentioned results got around the difficulty originating from pressure either by additional assumptions on the initial data globally in space or by at least some integrability assumption on pressure. As a result, the length of time $t_0$ depends on a certain size of the pressure directly and indirectly.

The last result of the paper assert that even with distributional pressure the short-time smoothing always occurs. Indeed, the result is stated for dissipative weak solutions which includes distributional pressure, and the length of time for which the regularity of the solutions is guaranteed is independent of the size of the pressure.
\begin{thm}\label{thm:app} There exists an universal constant $\e>0$ such that if a dissipative solution $(u,p)$ to the Navier-Stokes equations \eqref{NS} in $(0,T_0)\times B_2$, for some $T_0\in(0,1)$ has an initial data $u_0$ satisfying 
\begin{align*}
\norm{u_0}_{L^3(B_2)}\leq \e
\end{align*}
in the sense of {$\lim_{t\to 0^+}\norm{u(t,\cdot)-u_0}_{L^2(B_2)}=0$,}
and satisfies $\norm{u}_{L^3((0,T_0)\times B_2)}\leq M$ for some $M\in (0,\infty)$, then it is regular in $(0,T] \times B_{1/2}$, $T=\min(T_0, C_*(1+M^{42})^{-1})$ for some positive universal constant $C_*$.
\end{thm}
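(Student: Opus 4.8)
The plan is to localize the problem at scale $1$ and reduce Theorem \ref{thm:app} to an application of Theorem \ref{main.thm} together with a standard bootstrap. The first step is to decompose the solution on $Q_2$ (or on $(0,T_0)\times B_2$, after a cutoff in time near $t=0$ that we treat separately) into $u = v+h$ via the local Leray projection $v = -\curl\De^{-1}(\ph\,\curl u)$ and $h = u - v$, exactly as in the proof of Theorem \ref{main.thm}. Here $h$ is harmonic in $x$ on a slightly smaller cylinder, smooth, with $\norm{\na^k h}_{L^\infty((0,T_0)\times B_1)}\lec_k \norm{u}_{L^\infty(0,T_0;L^1(B_2))}\lec_k M + \e$ (the bound on $\norm{u}_{L^\infty_tL^1_x}$ following from $u\in L^\infty_tL^2_x$, which in turn is controlled near $t=0$ by $\norm{u_0}_{L^3(B_2)}\le\e$ via the local energy inequality and interpolation — this is where the initial condition $\lim_{t\to0^+}\norm{u(t)-u_0}_{L^2(B_2)}=0$ enters). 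So it suffices to prove Hölder regularity of $v$ on $(0,T]\times B_{1/2}$.

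Next I would address the smallness hypothesis of Theorem \ref{main.thm}. The point is that $\norm{u}_{L^3((0,T_0)\times B_2)}\le M$ is \emph{not} small, but on a short time interval $(0,T)$ one can hope that $\norm{u}_{L^3((0,T)\times B_2)}$ is small — this is false in general for a fixed $M$, so instead I would use the scaling/localization structure: by a parabolic rescaling around a point $(t_*,x_*)$ with $t_*\le T$, $x_*\in \b{B_{1/2}}$, at scale $\rho$, the rescaled velocity $u_\rho$ has $\norm{u_\rho}_{L^3(Q_2)} = \norm{u}_{L^3(Q_{2\rho}(t_*,x_*))}$, which by absolute continuity of the integral and the bound $\norm{u}_{L^3}\le M$ (plus the smallness of $\norm{u_0}_{L^3}$ near $t=0$, handled by a separate short-time argument) can be made $\le \e$ once $\rho$ is small enough — and the threshold $\rho$ is where the power $M^{42}$ and the time $T = C_*(1+M^{42})^{-1}$ come from, after quantifying the absolute continuity using $\norm{u}_{L^3}\le M$ and the decay as $t_*\to 0$ coming from $\norm{u_0}_{L^3}\le\e$. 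One applies Theorem \ref{main.thm} to the rescaled solution $u_\rho$ on $Q_2$: this is legitimate because the dissipative-solution property is scale-invariant. Then $v_\rho$ is $C^\al_{\mathrm{par}}$ on $Q_{1/2}$ with a universal bound, which unscales to Hölder bounds on $v$ near $(t_*,x_*)$; since $(t_*,x_*)$ was arbitrary in $(0,T]\times \b{B_{1/2}}$, $v\in L^\infty$ there, hence so is $u = v+h$, i.e.\ every such point is regular.

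The only genuinely delicate point is the interface near $t=0$: one must first establish regularity on a time-strip $(0,t_1]$ for some small $t_1$ using \emph{only} the smallness $\norm{u_0}_{L^3(B_2)}\le\e$ (not $M$), which is itself a short-time $\e$-regularity statement; here the decay $\lim_{t\to0^+}\norm{u(t)-u_0}_{L^2(B_2)}=0$ together with $u_0\in L^3(B_2)$ gives, via the local energy inequality, smallness of the scale-invariant quantities $\rho^{-2}\iint_{Q_\rho}|u|^3$ on cylinders anchored near $t=0$ for $\rho$ small, feeding Theorem \ref{main.thm} on those cylinders. For the complementary region $t\ge t_1$ one uses the $M$-dependent argument above. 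Matching the two gives $T = \min(T_0, C_*(1+M^{42})^{-1})$. I expect the main obstacle to be bookkeeping the quantitative dependence of the various small scales $\rho$ on $M$ (tracking how the exponent $42$ arises from the interplay of parabolic scaling of $\norm{\cdot}_{L^3}$, the $L^\infty_tL^2_x\hookrightarrow L^{10/3}$ energy interpolation, and the hypothesis $2/r+3/m<2$ in Theorem \ref{main.thm}), rather than any conceptual difficulty — once the decomposition $u=v+h$ is in hand and the harmonic part is absorbed harmlessly, the pressure has been eliminated and everything reduces to the already-proven perturbed $\e$-regularity theorem.
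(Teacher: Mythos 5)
Your decomposition step ($u=v+h$ via the localized Leray projection, so the pressure is eliminated and only the perturbed $\e$-regularity theorem needs to be invoked) is exactly the paper's starting point, and your recognition that the initial smallness $\norm{u_0}_{L^3(B_2)}\le\e$ must be propagated forward by the local energy inequality is also correct. But the central mechanism you propose — ``rescale at scale $\rho$ and invoke absolute continuity of the $L^3$ integral together with $\norm{u}_{L^3}\le M$'' — has a scaling error that undermines the whole plan. Under the parabolic rescaling $u_\rho(t,x)=\rho\,u(\rho^2t+t_*,\rho x+x_*)$, the space-time $L^3$ norm is \emph{not} scale-invariant:
\begin{align*}
\norm{u_\rho}_{L^3(Q_2)} = \rho^{-2/3}\,\norm{u}_{L^3(Q_{2\rho}(t_*,x_*))}.
\end{align*}
Absolute continuity tells you $\norm{u}_{L^3(Q_{2\rho})}\to 0$ as $\rho\to 0$, but gives no rate; to make the rescaled norm small you need it to decay faster than $\rho^{2/3}$, i.e.\ you need smallness of the scale-invariant quantity $\rho^{-2}\iint_{Q_{2\rho}}|u|^3$, which is precisely the content of an $\e$-regularity hypothesis and cannot be extracted from the bound $\norm{u}_{L^3}\le M$ alone. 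In particular, no threshold scale depending only on $M$ can be produced this way, so the explicit time $T\sim(1+M^{42})^{-1}$ cannot emerge from this route. (The ``$t\ge t_1$'' region you imagine does not in fact exist: for the short $T$ in the statement, every cylinder $Q_r(t_0,x_0)$ with $t_0\le T$ is anchored near $t=0$.)

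What the paper actually does, and what is missing from your outline, is a quantitative propagation argument: one defines a localized scale-invariant energy functional
\begin{align*}
E_r(t)=\frac{1}{2r}\int_{B_r}|v(t)|^2\,\d x+\frac{1}{r}\int_0^t\!\!\int_{B_r}|\na v|^2\,\d x\,\d s+\frac{1}{r^2}\int_0^t\!\!\int_{B_r}|q|^{3/2}\,\d x\,\d s,
\end{align*}
tests the local energy inequality for $v$ with cutoffs, splits the pressure $q$ locally into a Calder\'on--Zygmund part $q_o$ and a harmonic part $q_h$ (the harmonic part handled by interior elliptic estimates on dilated balls), and derives a Gr\"onwall-type inequality for $\mathcal{E}_{r,\rho_0}(t)=\sup_{r\le\rho\le\rho_0}E_\rho(t)$ with coefficients controlled by $\e$ and $M$. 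The Gr\"onwall lemma then yields $\mathcal{E}_{r,\rho_0}(t)\lec\e^2+\de^{1/4}$ on a time interval $(0,c_1r^2)$ whose length is explicit in $M$, and it is this iteration — not absolute continuity — that produces the power $M^{42}$ and feeds into the local $\e$-regularity criterion. Your proposal correctly identifies the reduction and the role of the local energy inequality in spirit, but leaves the quantitative Gr\"onwall step entirely undone, omits the local pressure decomposition needed to control $q$ in the scale-invariant quantity, and in its place offers a rescaling argument that cannot work.
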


This paper is organized as follows. In Section \ref{sec:dec}, we introduce our main observation, a decomposition lemma for dissipative solutions. Based on the lemma, we obtain the $\e$-regularity results, Theorem \ref{main.thm}, in Section \ref{sec:reg}-\ref{sec:proof}. The short-time regularity result, Theorem \ref{thm:app}, is established in Section \ref{sec:stlr}. Lastly, the proof of Theorem \ref{lem:equi} is given in Section \ref{sec:appenB}.

\subsection*{Notations} Given two comparable quantities $X$ and $Y$, the inequality $X\lec Y$ means $X\leq CY$ for some positive constant $C$. In a similar way, $X\gtrsim Y$ denotes $X\geq CY$ for some $C>0$. When the implicit constant has a dependence on some quantities $Z_1, \cdots, Z_n$, we use $X\lec_{Z_1, \cdots, Z_n} Y$ and $X\gtrsim_{Z_1, \cdots, Z_n} Y$. Given an open set $B$, we write $A\Subset B$ when $A$ is compactly embedded in $B$. For a point $z_0=(t_0,x_0) \in \R\times \R^3$ and a positive real number $r>0$, $Q_r$ stands for a parabolic cylinder $(t_0-r^2, 0) \times B_r(x_0)$, where $B_r(x_0)$ is a Euclidean ball centered at $x_0$ with the radius $r$. Similarly, $Q_r^*$ is an extended parabolic cylinder $(t_0-r^2, t_0+r^2) \times B_r(x_0)$. We write the H\"older space on a domain $\mathcal{O}\subset \R\times \R^3$ with respect to the parabolic distance as $C^\al_{\text{par}}(\mathcal{O})$ and the dual space of $W_0^{1,p}(\Om)$ as $W^{-1,p'}(\Om)$ where $p'$ is the H\"older conjugate of $p\in (1,\infty)$.

\subsection*{Acknowledgments}
The author is grateful to Camillo
De Lellis and Elia Bru\`e for helpful discussions and thank Hongjie Dong for suggesting to weaken the smallness assumption. The author has been supported by the National Science Foundation under Grant No. DMS-1926686.

\section{Decomposition of a dissipative solution}\label{sec:dec}
In this section, we decompose a dissipative solution into the principal part and the harmonic part. The principal part is obtained by taking a local version of the Leray projection operator $ - \curl\curl \De^{-1}= \I - \na\De^{-1}\div$ to the solution. We first introduce the local version, which  dates back to \cite{OL03}, to our knowledge.   
\begin{defn}\label{def:llp}(A localized Leray projection operator) Let $\Om$ be a bounded domain in $\R^3$ and $\Om_0$ be an open set with $\Om_0\Subset \Om$. Let $\ph=\ph(x)\in C_c^\infty(\Om;[0,1])$ be a function satisfying $\Om_0 \Subset \{\ph=1\}$. Then, a localized Leray projection operator $\mathbb{P}_{\ph}$ is defined by
\begin{align*}
\mathbb{P}_{\ph} g := - \curl\De^{-1}(\ph \curl g).
\end{align*}
\end{defn}

\begin{rem}\label{rem:llp1} The operator $\mathbb{P}_{\ph}$ maps from $L^p(\Om)$ to $L^p_\si(\Om_0)$ for any $p\in (1,\infty)$, where $L^p_\si(\Om_0) = \{f\in L^p(\Om_0): \div f =0\}$. By the definition, $\mathbb{P}_{\ph}g$ is obviously divergence-free and one can write
\begin{align}
\mathbb{P}_{\ph} g
&= -\curl\De^{-1}\curl( \ph g) + \curl\De^{-1} (\na \ph \times g) 
\nonumber\\
&= g\ph - \na \De^{-1}\div (\ph g) + \curl\De^{-1} (\na \ph \times g). \label{llp1}
\end{align}
It is then easy to see that by the $L^p$-boundedness of Riesz transforms and $\Om_0\Subset \{\ph=1\}$, 
\begin{align}\label{est.v}
\norm{\mathbb{P}_{\ph} g}_{L^p(\Om_0)} 
\lec_{p, \Om_0, \ph} \norm{g\ph}_{L^p(\R^3)} + \norm{\na \ph\times g}_{L^1(\R^3)}
\lec_{\ph} \norm{g}_{L^p(\Om)}.
\end{align}  
Indeed, since $x\in \Om_0$ and $y\in \supp(\na \ph)$ satisfy $|x-y|\geq c$ for some $c=c(\Om_0,\ph)>0$, we have
\begin{align}\label{with.naph}
|\curl\De^{-1}(\na \ph\times g)(x)|
\lec \int \frac{ |\na \ph\times g(y)| }{|x-y|^2}\d y \leq c^{-2}\norm{\na \ph\times g}_{L^1(\R^3)}, \quad\forall x\in \Om_0.
\end{align}
\end{rem}
\begin{rem}\label{rem:llp2} The motivation of the original Leray Projection operator is to eliminate pressure from the Navier-Stokes equations. The localized Leray projection operator still plays the role;
\begin{align*}
\mathbb{P}_\ph \na p =0 
\end{align*}
for any $p\in \mathcal{D}'(\Om)$ in distribution sense. Indeed, for any $\xi\in C_c^\infty(\Om)$, we have
\begin{align*}
(\mathbb{P}_\ph \na p)(\xi)
= \int p \div (\curl(\ph \curl\De^{-1} \xi)) \d x=0.  
\end{align*}
However, it doesn't map a divergence-free vector to itself. Instead, for any divergence-free $g$, the error \begin{align*}
g -\mathbb{P}_{\ph} g
=g\ph -\mathbb{P}_{\ph} g
= \na \De^{-1} (g\cdot \na\ph) - \curl\De^{-1}(\na\ph\times g), \quad \text{ on }\Om_0
\end{align*}
(see \eqref{llp1})
is harmonic and smooth in $\Om_0$; the harmonicity follows from $\na \ph =0$ on $\Om_0$ and for any integer $m\geq 0$,
\begin{align}\label{error.est}
\norm{\na^m(\mathbb{P}_{\ph} g- g)}_{L^\infty(\Om_0)}
&\le \norm{\na^{m+1} \De^{-1}(\na \ph\cdot g )}_{L^p(\Om_0)}+ \norm{\na^{m} \curl\De^{-1} (\na \ph \times g)}_{L^p(\Om_0)}  \nonumber\\
&\lec_{m, \Om_0,\ph} \norm{g}_{L^1(\Om)}.
\end{align}
The last inequality is obtained similar to \eqref{with.naph}. 
\end{rem} 

Now, we introduce the notion of a suitable weak solution to generalized Navier-Stokes equations. 

\begin{defn}[A suitable weak solution]\label{def:suit} We call $(u,p)$ as a suitable weak solution to generalized Navier-Stokes equations
\begin{equation}\label{gen.NS}\begin{split}
\begin{cases}
\pa_t v + \la (v\cdot \na )v+ (v\cdot \na)h  +(h\cdot \na )v + \na q = \De v + f\\
\div v =0
\end{cases}
\end{split}\end{equation}
on a domain $\mathcal{O}\subset  \R\times \R^3$ for $\la\in \R$, divergence-free $h\in L^2_tL^\infty_x(\mathcal{O})$ and divergence-free $f\in L^1_tL^2_x(\mathcal{O}) $ if the following holds
\begin{enumerate}[(i)]
\item $v\in L^\infty_tL^2_x(\mathcal{O})$, $\na v\in L^2(\mathcal{O})$, and $q\in L^\frac32(\mathcal{O})$.   
\item $(v,q)$ solves \eqref{gen.NS} in $\mathcal{O}$ in distribution sense.
\item $(v,q)$ satisfies the local energy inequality 
\begin{align}\label{LEI.v}
(\pa_t -\De)\left(\frac12 |v|^2\right) + |\na v|^2
+ \div \left(\frac 12|v|^2 (\la v+ h)\right) + \div(h\otimes v)\cdot v + \div (qv) - f\cdot v \leq 0
\end{align}
in distribution sense in $\mathcal{O}$. Here and in what follows, $\div (a\otimes b)$ denotes $\pa_j (a_i b_j)$ in the Einstein summation convention. 
\end{enumerate}
\end{defn}

We decompose a dissipative solution based on the following lemma. Indeed, it is the key lemma for Theorem \ref{main.thm}. 
\begin{lem}\label{lem:decomp} Let $\ph\in C_c^\infty(\R^3)$ be a spatial cut-off function satisfying $B_1 \Subset \{\ph =1\}$ and $\supp(\ph)\subset B_2$. 
For any dissipative solution $(u,p)$ to \eqref{NS} on $Q_2$, we have a decomposition of $u= v+h$ in $(-4,0)\times B_1$ such that
\begin{enumerate}[(i)]
\item $v=- \curl\De^{-1}(\ph \curl u)$ is a suitable weak solution to 
\begin{equation}\label{per.NS}\begin{split}
\begin{cases}
\pa_t v + (v\cdot \na )v+ (v\cdot \na)h  +(h\cdot \na )v + \na q = \De v + f\\
\div v =0
\end{cases}
\end{split}\end{equation}
on $(-4,0)\times B_1$ with some pressure $q$ and external force $f$ satisfying the following: $\div f =0$,
\begin{align}
\norm{q}_{L^{\frac r2}(-4,0;L^\frac m2( B_1))}
&\lec_m \norm{u}_{L^r_tL^m_x(Q_2)}^2 \label{est.q}\\
\norm{f}_{L^{\frac s2}(a,b;L^\infty(B_1))} &\lec \norm{u}_{L^s(a,b;L^1(B_2))} + \norm{u}_{L^s(a,b;L^2(B_2))}^2 \label{est.f}
\end{align}
for any $m\in (2,\infty)$, $r, s\in [2,\infty]$, and $(a,b)\subset (-4,0)$, provided that the right hand side of \eqref{est.q} is finite. 

\medskip

\item $h$ is divergence-free and harmonic in $(-4,0)\times B_1$ and satisfies
\begin{align}
\norm{\na^k h}_{L^r(a,b;L^\infty(B_1))}
&\lec_{k} \norm{u}_{L^r(a,b;L^1(B_2))}, \label{est.h}
\end{align}
for any integer $k\geq 0$, $(a,b)\subset (-4,0)$ and $r\in [1,\infty]$. 
\end{enumerate}
\end{lem}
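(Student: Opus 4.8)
The plan is to apply the localized Leray projection operator $\mathbb{P}_\ph = -\curl\De^{-1}(\ph\curl\,\cdot)$ from Definition \ref{def:llp} (with $\Om = B_2$, $\Om_0 = B_1$) to the dissipative solution $(u,p)$, setting $v := \mathbb{P}_\ph u$ and $h := u - v$ on $(-4,0)\times B_1$. By Remark \ref{rem:llp2}, $h = \na\De^{-1}(u\cdot\na\ph) - \curl\De^{-1}(\na\ph\times u)$ on $B_1$ is divergence-free, harmonic (since $\na\ph \equiv 0$ on $B_1$), and satisfies the pointwise derivative bounds \eqref{error.est}, which integrated in time gives \eqref{est.h}. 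So part (ii) is essentially immediate from Remark \ref{rem:llp2}; the real content is part (i), showing $v$ solves \eqref{per.NS} in the suitable-weak-solution sense with forcing and pressure obeying \eqref{est.q}--\eqref{est.f}.

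For the equation: starting from \eqref{weak.NS}, I would apply $\mathbb{P}_\ph$ formally to $\pa_t u - \De u + \div(u\otimes u) + \na p = 0$. The pressure dies by Remark \ref{rem:llp2}. Since $\mathbb{P}_\ph$ is a spatial Fourier multiplier composed with multiplication by $\ph$ (time-independent), it commutes with $\pa_t$ and with $\De$, so $\pa_t v - \De v + \mathbb{P}_\ph\div(u\otimes u) = 0$ on $B_1$. Now write $u = v + h$ and decompose $u\otimes u = v\otimes v + v\otimes h + h\otimes v + h\otimes h$. The term $\mathbb{P}_\ph\div(v\otimes v)$ should reproduce $\div(v\otimes v) + \na(\text{something})$ modulo a harmonic error; more precisely on $B_1$, $\mathbb{P}_\ph\div(v\otimes v) = \div(v\otimes v) - \na\De^{-1}\div\div(\ph v\otimes v) + (\text{smooth harmonic error from }\na\ph)$, by \eqref{llp1}. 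The middle term is a pressure gradient $\na q_1$ with $q_1 = -\De^{-1}\div\div(\ph v\otimes v)$, and Calderón--Zygmund estimates give $\|q_1\|_{L^{r/2}_tL^{m/2}_x} \lec \|v\otimes v\|_{L^{r/2}_tL^{m/2}_x} \lec \|v\|_{L^r_tL^m_x}^2 \lec \|u\|_{L^r_tL^m_x(Q_2)}^2$ using \eqref{est.v}; this is \eqref{est.q}. Similarly $\mathbb{P}_\ph\div(v\otimes h + h\otimes v) = \div(v\otimes h) + \div(h\otimes v) + \na q_2 + (\text{error})$, and since $h$ is smooth with $\|h\|$ controlled by $\|u\|_{L^r_tL^1_x}$, the pressure piece $q_2$ can be absorbed and the genuinely nonlocal leftover, together with $\mathbb{P}_\ph\div(h\otimes h)$ and all the $\na\ph$-supported harmonic errors, gets collected into the divergence-free forcing $f$. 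One then checks $\div f = 0$ directly (every term is built from $\mathbb{P}_\ph$-images, which are divergence-free, plus exact gradients which we moved into $q$) and that $f \in L^{s/2}_tL^\infty_x(B_1)$ with the bound \eqref{est.f} — the quadratic term $\|h\otimes h\| \lec \|u\|_{L^s_tL^1_x}^2$ and the errors involving $v$ contribute $\|v\|_{L^s_tL^2_x}^2 \lec \|u\|_{L^s_tL^2_x}^2$ and linear-in-$h$ pieces bounded by $\|u\|_{L^s_tL^1_x}$, matching the two terms on the right of \eqref{est.f}.

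The one step that genuinely needs care, and which I expect to be the main obstacle, is the local energy inequality \eqref{LEI.v} for $v$. One cannot simply multiply the equation by $v$ and integrate, because $v$ is only a weak solution and $\mathbb{P}_\ph$ is nonlocal, so the distributional local energy inequality \eqref{LEI0} for $(u,p)$ does not transfer termwise. The approach is to start from \eqref{LEI0}, which controls $\iint|\na u|^2\xi$, and relate $|\na u|^2$ to $|\na v|^2$ using $u = v + h$ with $h$ smooth: $|\na u|^2 = |\na v|^2 + 2\na v:\na h + |\na h|^2 \geq |\na v|^2 + 2\na v:\na h$, so the smooth cross term and $|\na h|^2$ are harmless lower-order contributions that can be folded into $f\cdot v$ or handled by integration by parts against the smooth $h$. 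For the right-hand side of \eqref{LEI0}, one substitutes $u = v+h$ into $|u|^2/2$, $(u\cdot\na)\xi$ and into $\langle\div(pu)\rangle$, and the key identification is that $\langle\div(pu)\rangle$, expressed via the Duchon--Robert limit \eqref{divpu}, matches (up to smooth-in-$h$ terms and the $\div(qv)$ and $f\cdot v$ terms generated by the equation manipulation above) the distributional $\div(qv) + \div((|v|^2/2)h) + \div(h\otimes v)\cdot v$ appearing in \eqref{LEI.v}; here one uses that $q$ was defined precisely as the $\De^{-1}$-pressures extracted from $\mathbb{P}_\ph\div(u\otimes u)$, together with the identity relating $pu$ to $qv$ modulo smooth corrections. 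Carefully tracking the mollification limits — commuting $\mathbb{P}_\ph$ past the mollifier $\psi_{\al,\ep}$ and justifying the passage $\al,\ep\to 0$ using the uniform bounds on $v, h, q$ — is the technical heart, but structurally it is a bookkeeping exercise modeled on the Duchon--Robert / Chamorro--Lemarié-Rieusset--Mayoufi computation, which I would cite for the analytic lemmas on the independence of the limit from the mollifier.
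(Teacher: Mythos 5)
Your overall architecture --- set $v=\mathbb{P}_\ph u$, $h=u-v$, read part (ii) off Remark \ref{rem:llp2}, derive the $v$-equation by applying $\mathbb{P}_\ph$ to \eqref{NS}, push exact gradients into $q$ and the $\na\ph$-supported errors into $f$ --- is the same as the paper's. However, your claim that $\mathbb{P}_\ph$ ``commutes with $\De$'' is false: $\mathbb{P}_\ph \De u-\De v=\curl\De^{-1}\bigl(2\na\ph\cdot\na\curl u+\De\ph\,\curl u\bigr)$, which is nonzero on $B_1$ (it is spatially smooth there, but $\De^{-1}$ is nonlocal), so the equation $\pa_t v-\De v+\mathbb{P}_\ph\div(u\otimes u)=0$ you write down is missing forcing terms; these are exactly the terms \eqref{err.Deu} that the paper puts into $f$, and their estimate \eqref{est.fd} requires first integrating by parts to move all derivatives off $u$ (otherwise you would need $|u|\,|\na u|\in L^{s}_tL^1_x$ uniformly in time, which the right-hand side of \eqref{est.f} does not provide). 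The same derivative-placement issue appears in your $\na\ph$-error from the nonlinearity: the paper works with the rotational form $\om\times u$ and rewrites $\na\ph\times(\om\times u)$ in divergence form precisely so that only $u$ and $u\otimes u$, never $\na u$, are integrated against the kernel as in \eqref{with.naph}. These points are repairable within your scheme, but as written the derivation of \eqref{per.NS} and of \eqref{est.f} is incomplete.

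The genuine gap is your treatment of the local energy inequality. You propose to substitute $u=v+h$ into \eqref{LEI0} and to ``match'' $\langle\div(pu)\rangle$ with $\div(qv)$ up to smooth corrections. No such matching is available: $p$ is a bare distribution with no quantitative relation to $q$, and $h$, while smooth in space, has no time regularity (for the parasitic solutions \eqref{para.sol} one has $h=u=\Phi(t)\na H$ with $\Phi$ merely bounded, and the discrepancy between $p$ and any local pressure involves $\Phi'$). Consequently the cross terms $\iint\bigl(v\cdot h+|h|^2/2\bigr)\pa_t\xi$, which weakly encode $\pa_t h$, and the term $\langle\div(pu)\rangle$ cannot be absorbed as ``harmless lower-order'' contributions; rewriting them would require the evolution equation for $h$, which reintroduces $p$. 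The paper never compares $p$ with $q$: it mollifies the $u$-equation and the $v$-equation separately, tests with $(u_{\al,\ep}\xi)\ast\psi_{\al,\ep}$ and $(v_{\al,\ep}\xi)\ast\psi_{\al,\ep}$, and obtains two energy balances whose only implicit ingredients are the Duchon--Robert defect terms $\mu_\ep$ and $\eta_\ep$; since $u-v=h$ is spatially smooth, \cite[Lemma 3.7]{CLRM18} gives $\lim_{\ep\to0}(\mu_\ep-\eta_\ep)(\xi)=0$, and dissipativity of $u$ (that is, $\lim_{\ep\to0}\mu_\ep(\xi)\ge0$, which is exactly \eqref{LEI0}) then yields $\lim_{\ep\to0}\eta_\ep(\xi)\ge0$, i.e.\ the local energy inequality for $v$. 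The Duchon--Robert/CLRM18 result you invoke (mollifier-independence of \eqref{divpu}) is a different statement and does not substitute for this comparison-of-defects argument, so your energy-inequality step does not close as sketched.
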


\begin{rem}
In the inequalities \eqref{est.q}-\eqref{est.h}, we ignore the dependence of implicit constants on $\ph$. The proof of the lemma is inspired by \cite{CLRM18, VY20}. 
\end{rem}
\begin{rem}By slight modification of the proof, one can easily see that $f$ has better regularity than \eqref{est.f}; 
\begin{align*}
\norm{\na^k f}_{L^\infty((a,b)\times B_1)}
\lec_k \norm{u}_{L^\infty(a,b;L^1(B_2))}
+\norm{u}_{L^\infty(a,b;L^2(B_2))}^2, \qquad\forall k\in \N\cup\{0\}.
\end{align*}
\end{rem}

\begin{proof} 
By Remark \ref{rem:llp1}, $v$ is divergence-free, which implies that $h=u-v$ is divergence-free in $(-4,0)\times B_1$. Extend $h$ by defining $h = u\ph-v$ on $Q_2$. (Note that $\ph=1$ on $B_1$) Then, by Remark \ref{rem:llp2}, $h$ is harmonic in $(-4,0)\times B_1$ and \eqref{est.h} easily follows from \eqref{error.est}. 
\medskip

\noindent\texttt{Step 1.} $v$ as a weak solution to \eqref{per.NS} in $(-4,0)\times B_1$.

We first note that $v=u-h$ in $(-4,0)\times B_1$ and \eqref{est.h} imply that
\begin{align*}
\norm{v}_{L^\infty_tL^2_x((-4,0)\times B_1)}
+\norm{\na v}_{L^2((-4,0)\times B_1)}
\lec \norm{u}_{L^\infty_tL^2_x(Q_2)}
+\norm{\na u}_{L^2(Q_2)}<+\infty.
\end{align*}

To write \eqref{NS} in terms of $v$, we take the localized Leray projection operator $\mathbb{P}_\ph$ to \eqref{NS}. This can be done rigorously by testing \eqref{NS} with $-\curl(\ph\curl \De^{-1} \xi)$ for any $\xi\in C_c^\infty((-4,0)\times B_1)$. By Remark \ref{rem:llp2}, $\mathbb{P}_\ph \na p =0$. Also, it is easy to see that $\mathbb{P}_\ph \pa_t u= \pa_t v$. Furthermore, a simple computation gives
\begin{equation}\label{err.Deu}\begin{split}
-\curl \De^{-1}(\ph \curl  \De u ) - \De v 
&= 2\curl\De^{-1} (\na \ph \cdot \na \curl u) + \curl\De^{-1} (\De\ph \curl u)\\
&=2\curl \De^{-1} \div (\curl u\otimes \na \ph)
-\curl\De^{-1} (\De\ph \curl u).
\end{split}\end{equation}
These error terms \eqref{err.Deu} will be included in $f$. Indeed, these terms are divergence-free and can be estimated as
\begin{equation}\label{est.fd}\begin{split}
&\norm{2\curl \De^{-1} \div (\curl u\otimes \na \ph)
-\curl\De^{-1} (\De\ph \curl u)}_{L^{s}(a,b;L^\infty(B_1))} \\
&\quad\leq \norm{2\curl \De^{-1}\curl\div (u\otimes \na \ph)-2\curl \De^{-1}\pa_j(\pa_j\na\ph\times u)}_{L^{s}(a,b;L^\infty(B_1))} \\
&\qquad+\norm{\curl\De^{-1}\curl(u \De\ph)- \curl\De^{-1}(\na\De\ph\times u)}_{L^{s}(a,b;L^\infty(B_1))}\\
&\quad\lec \norm{u}_{L^s(a,b;L^1(B_2))}
\end{split}\end{equation}
for any $s\in [1,\infty]$ and $(a,b)\subset(-4,0)$. Here, the second estimate is obtained similar to \eqref{with.naph}. We remark that all equalities in \eqref{err.Deu} are valid on $(-4,0)\times B_1$ in distribution sense, which can be justified by using mollifications and \eqref{est.fd}. 

Using $(\curl a)\times b = (b\cdot \na )a - b_j \na a_j $, on the other hand, we have
\begin{align*}
-\curl \De^{-1}(\ph \curl (u\cdot \na )u )
&= -\curl \De^{-1}(\ph \curl (\om \times u) )\\
&= -\curl \curl\De^{-1} (\ph \om \times u )
+\curl \De^{-1}(\na \ph\times (\om \times u))\\
&= \ph \om \times u - \na \De^{-1} \div  (\ph \om \times u )+\curl \De^{-1}(\na \ph\times (\om \times u)).
\end{align*}
We include the second term in $\na q$ and the last term in $f$.  Since we have
\begin{align*}
\ph \om \times u
=\div\left(u\otimes u \ph -\frac{|u|^2}2\ph\I\right)-u(u\cdot\na) \ph + \frac{|u|^2}2\na\ph.
\end{align*}
by using the $L^m$-boundedness of Riesz transforms and estimating similar to \eqref{with.naph}, we get 
\begin{align*}
\norm{\De^{-1} \div  (\ph \om \times u )}_{L^{\frac r2}(-4,0;L^\frac m2(B_1))} \lec_m \norm{u}_{L^r_tL^m_x(Q_2)}^2,
\end{align*}
for any $r\in [2,\infty]$ and $m\in (2,\infty)$, provided that the right hand side is bounded. On the other hand, the last term $\curl \De^{-1}(\na \ph\times (\om \times u))$ is divergence-free and satisfies
\begin{align*}
\norm{\curl \De^{-1}(\na \ph\times (\om \times u))}_{L^{\frac s2}(a,b;L^\infty(B_1))}
\lec \norm{|u|^2}_{L^{\frac s2}(a,b;L^1(B_2))}
\lec \norm{u}_{L^s(a,b;L^2(B_2))}^2
\end{align*}
for any $s\in [2,\infty]$ and $(a,b)\subset (-4,0)$, so that we put it into $f$. 
Here, we used the following representation 
\begin{align*}
\na\ph \times (\om\times u)&=\na \ph \times \div\left(u\otimes u - \frac{|u|^2}2\I\right)\\
&= \div((\na\ph\times u)\otimes u) + \curl\left(\frac{|u|^2}2 \na\ph\right) - ((u\cdot\na)\na\ph)\times u,
\end{align*}
and estimated it similar to \eqref{with.naph}. Lastly, we consider the first term. Choose another smooth cut-off function $\ph^b=\ph^b(x)$ satisfying $B_1\Subset \{\ph^b=1\}$ and $\supp(\ph^b) \Subset \{\ph=1\}$. On $(-4,0)\times B_1$, we have $\ph = \ph^b =1$, so that 
\begin{align*}
\ph \om \times u
&= \om \times u = \curl v\times v 
+ \curl h \times v + \curl v\times h + \curl h \times h\ph^b\\
&= (v \cdot \na) v  + (v\cdot \na)h + (h\cdot \na)v
-\na\left( \frac{|v|^2}2 + v\cdot h \right)
+ \curl h \times h\ph^b.
\end{align*}
We include the fourth term in $\na q$ because
\begin{align*}
\norm{|v|^2/2 + v\cdot h}_{L^{\frac r2}(-4,0;L^\frac m2( B_1))} 
&\leq \norm{v}_{L^r(-4,0;L^m( B_1))}^2 + \norm{v}_{L^r(-4,0;L^m( B_1))}\norm{h}_{L^r(-4,0;L^m( B_1))}\\
&\lec_m \norm{u}_{L^r_tL^m_x(Q_2)}^2
\end{align*}
for any $r\in [2,\infty]$ and $m\in (2,\infty)$, provided that the right hand side is bounded, by the $L^m$-boundedness of Riesz transforms, \eqref{est.v} and \eqref{est.h}. 
As for the last term, we split it into two parts using $\I = -\curl\De^{-1}\curl + \na \De^{-1} \div$, and put the first part (which is divergence-free) into $f$ and the second part into $\na q$. Indeed, we have  
\begin{align*}
\norm{\De^{-1}\div (\curl h\times h\ph^b)}
_{L^{\frac r2}(-4,0;L^\frac m2( B_1))} 
&\lec \norm{|\na h| |h|}_{L^{\frac r2}_tL^\infty_x((-4,0)\times \supp(\ph^b))}
\lec \norm{u}_{L^r_tL^m_x( Q_2)}^2
\end{align*}
for any $r\in [2,\infty]$ and $m\in (2,\infty)$, provided that the right hand side is bounded. This is because \eqref{est.h} is still valid even on the domain $(-4,0)\times \supp(\ph^b)$ by the choice of $\ph^b$; $\dist(\supp(\ph^b), \supp(\na \ph))>0$. 

Also, for $(t,x)\in (-4,0)\times B_1$ we have
\begin{align*}
|\curl\De^{-1}\curl (\curl h\times h\ph^b)(t,x)|
&\lec
\int \frac {|\curl(\curl h\times h\ph^b)(t,y)|}{|x-y|^2}  \d y\\
&\lec \int_{|x-y|\leq 3} \frac 1{|x-y|^2} \d y 
\norm{\curl(\curl h\times h\ph^b)(t,\cdot)}_{L^\infty(\R^3)}\\
&\lec \norm{h(t,\cdot)}_{W^{2,\infty}(\supp(\ph^b))}^2
\lec \norm{u}_{L^1(B_2)}^2,
\end{align*}
which implies
\begin{align*}
\norm{\curl\De^{-1}\curl (\curl h\times h\ph^b)}_{L^\frac s2(a,b;L^\infty(B_2))}
\lec \norm{u}_{L^s(a,b;L^1(B_2))}^2
\lec \norm{u}_{L^s(a,b;L^2(B_2))}^2
\end{align*}
for any $s\in [2,\infty]$ and $(a,b)\subset (-4,0)$.

To summarize, $v$ solves \eqref{per.NS} in $(-4,0)\times B_1$ in distribution sense with
\begin{align*}
q&= -\De^{-1} \div  (\ph \om \times u ) - \frac 12 |v|^2 - v\cdot h + \De^{-1} \div (\curl h \times h\ph^b)\\
f&= 2\curl \De^{-1} \div (\curl u\otimes \na \ph)
-\curl\De^{-1} (\De\ph \curl u)\\
&\quad-\curl \De^{-1}(\na \ph\times (\om \times u))
+ \curl\De^{-1}\curl(\curl h \times h\ph^b),
\end{align*}
where $f$ is divergence-free and they satisfy the desired estimates \eqref{est.q} and \eqref{est.f}. In particular, $q\in L^\frac32((-4,0)\times B_1)$. 

\medskip

\noindent\texttt{Step 2.} The local energy estimate for $v$.  

Define $ \psi_{\al,\ep}$ as in Definition \ref{def:diss} but choose $\be$ and $\ga$ to be even functions; $\be(t)=\be(-t)$ and $\ga(x)=\ga(-x)$. Set $u_{\al, \ep} := u\ast \psi_{\al,\ep}$ and $v_{\al, \ep} := v\ast \psi_{\al,\ep}$. For any fixed function $\xi\in C_c^\infty((-4,0)\times B_1)\geq 0$, the functions $(u_{\al,\ep} \xi)\ast \psi_{\al,\ep}$ and $(v_{\al,\ep} \xi)\ast \psi_{\al,\ep}$ remain in $C_c^\infty((-4,0)\times B_1)$ for sufficiently small $\al$ and $\ep$. Apply them as a test functions to  \eqref{NS} and \eqref{per.NS}, respectively. Then, we have
\begin{align*}
&\left((\pa_t-\De)  \frac{|u_{\al,\ep}|^2}2+|\na u_{\al,\ep}|^2
+ u_{\al,\ep}\cdot (\div (u\otimes u)\ast\psi_{\al,\ep}) + \div((p\ast \psi_{\al,\ep}) u_{\al,\ep})\right)(\xi)  =0\\
&\left((\pa_t-\De)\frac{|v_{\al,\ep}|^2}2 +|\na v_{\al,\ep}|^2
+ v_{\al,\ep}\cdot (\div (v\otimes v+ v\otimes h + h\otimes v)\ast\psi_{\al,\ep})\right)(\xi) \\
&\hspace{6.1cm}+ \left(\div((q\ast \psi_{\al,\ep}) v_{\al,\ep}) - v_{\al,\ep}\cdot (f\ast\psi_{\al,\ep})\right)(\xi) =0.
\end{align*}
Sending $\al\to 0$ and then $\ep\to 0$, it follows that
\begin{align*}
&\left((\pa_t-\De)  \frac{|u|^2}2
+|\na u|^2+ \div \left(\frac{|u|^2}2u\right) 
+\langle \div (pu) \rangle 
\right)(\xi)  =-\lim_{\ep\to 0} \mu_\ep(\xi)
\\
&\left((\pa_t-\De)\frac{|v|^2}2
+|\na v|^2+ \div \left(\frac{|v|^2}2(v+h)\right) + \div (h\otimes v) \cdot v + \div(qv)- f\cdot v\right)(\xi) =-\lim_{\ep\to 0} \eta_\ep(\xi) ,
\end{align*}
where $\mu_\ep = u_\ep \cdot (\div (u\otimes u) \ast \ga_\ep)-\div (|u|^2 u/2)$ and $\eta_\ep = v_\ep \cdot (\div (v\otimes v) \ast \ga_\ep)-\div (|v|^2 v/2)$.
Since $\underset{\ep\to 0}{\lim} \, (\mu_\ep-\eta_\ep)(\xi) =0$ by \cite[Lemma 3.7]{CLRM18} and the local energy inequality for $u$ gives $\lim_{\ep\to 0} \mu_\ep(\xi) \geq 0$, we have $\underset{\ep\to 0}{\lim}\, \eta_\ep(\xi) \geq 0$, which implies the local energy inequality for $v$.
\end{proof}

\section{$\e$-regularity criterion for a perturbed Navier-Stokes equations}\label{sec:reg}
In this section, we obtain an $\e$-regularity criterion for a 
suitable weak solution to perturbed Navier-Stokes equations with some external force, obtained in the previous section:

\begin{thm}\label{thm:ep.reg} For any $r,m\in (2,\infty]$ with $2/r+3/m <2$, there exist $\e_0=\e_0(r,m)>0$ and a universal constant $C>0$ such that if a suitable weak solution $(v,q)$ to \eqref{per.NS} on $Q_1$ for some $h$ and $f$ with $\div h=\div f =0$ satisfies
\begin{align} 
\label{small.e0}
\norm{v}_{L^r_tL^m_x(Q_1)}
+ \norm{q}_{L^{\frac r2}_tL^{\frac m2}_x(Q_1)} + \norm{h}_{L^r_tW^{1,\infty}_x(Q_1)}
+ \norm{f}_{L^\frac r2_tL^\infty_x(Q_1)} \le \e_0,
\end{align} 
then we have
\begin{align*}
\norm{v}_{C^{\al}_{\text{par}}(Q_{1/2})} \leq C 
\end{align*}
for some $\al\in (0,1/2-1/r)$.
\end{thm}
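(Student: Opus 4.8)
## Proof proposal for Theorem \ref{thm:ep.reg}

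\textbf{Overall strategy.} The plan is to follow the classical Caffarelli--Kohn--Nirenberg / Lin / Ladyzhenskaya--Seregin scheme for $\e$-regularity, adapted to the perturbed equation \eqref{per.NS}. The key point is that the perturbation $h$ and the force $f$ are, by hypothesis, controlled in strong norms ($h\in L^r_tW^{1,\infty}_x$, $f\in L^{r/2}_tL^\infty_x$), so they can be absorbed as lower-order terms that do not spoil the scaling-critical estimates. Concretely, I would prove an iteration estimate for a family of scale-invariant quantities on shrinking parabolic cylinders and show that smallness of \eqref{small.e0} forces these quantities to stay small after rescaling, which by a known local regularity statement (e.g. the argument behind Theorem \ref{thm:origin}, or a Serrin-type criterion) yields boundedness and then parabolic Hölder continuity of $v$ on $Q_{1/2}$.

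\textbf{Key steps, in order.} First I would set up the natural dimensionless quantities adapted to the exponents $(r,m)$: for $\rho\in(0,1]$ let
\begin{align*}
A(\rho) &= \sup_{t}\rho^{-1}\!\!\int_{B_\rho}\! |v|^2, \quad
E(\rho) = \rho^{-1}\!\!\iint_{Q_\rho}\!|\na v|^2, \quad
C_{r,m}(\rho) = \rho^{-(2-2/r-3/m)}\|v\|_{L^r_tL^m_x(Q_\rho)}, \\
D(\rho) &= \rho^{-(2-4/r-6/m)}\|q\|_{L^{r/2}_tL^{m/2}_x(Q_\rho)},
\end{align*}
together with the analogous scale-invariant norms of $h$ and $f$, which are small by assumption and remain small under the rescaling $v_\rho(s,y)=\rho v(\rho^2 s,\rho y)$ because $2/r+3/m<2$ makes all the relevant exponents subcritical (this is exactly why the range $(r,m)\in(2,\infty]^2$ with $2/r+3/m<2$ is needed). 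Second, I would test the local energy inequality \eqref{LEI.v} with a standard cutoff adapted to $Q_\rho\subset Q_\sigma$ and estimate each term: the $\div(\tfrac12|v|^2 v)$ term and the pressure term $\div(qv)$ are handled by Hölder and the local pressure bound exactly as in the classical proof; the new terms $\div(h\otimes v)\cdot v$ and $f\cdot v$ are estimated by pulling $h$ and $f$ out in $L^\infty_x$ and using Hölder in time plus the energy norm of $v$, so they contribute only $\|h\|_{L^r_tW^{1,\infty}_x}$ and $\|f\|_{L^{r/2}_tL^\infty_x}$ times powers of $A,E$. This yields an inequality of the form $A(\rho)+E(\rho)\lesssim (\text{interpolation of }C_{r,m},D,A,E\text{ at scale }\sigma) + \e_0(\cdots)$. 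Third, I would bound the pressure at a smaller scale by the velocity: decompose $q = q_{\mathrm{loc}} + q_{\mathrm{harm}}$ on $B_\sigma$ where $q_{\mathrm{loc}}$ solves $-\Delta q_{\mathrm{loc}} = \partial_i\partial_j(v_iv_j + h_iv_j + v_ih_j)$ with zero boundary data (Calderón--Zygmund gives $\|q_{\mathrm{loc}}\|_{L^{m/2}_x}\lesssim \|v\|_{L^m_x}^2 + \|h\|_{L^\infty_x}\|v\|_{L^m_x}$) and $q_{\mathrm{harm}}$ is harmonic hence satisfies a reverse-Hölder/interior bound letting one transfer $D$ from scale $\sigma$ to scale $\rho$ with a gain $(\rho/\sigma)^{\gamma}$. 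Fourth, combine: interpolating (Gagliardo--Nirenberg / Hölder in space--time, using $\na v\in L^2$ to upgrade $L^\infty_tL^2_x$ to a subcritical space--time Lebesgue norm) I obtain a closed iteration
\[
\Phi(\rho) \;\le\; C\Big(\tfrac{\rho}{\sigma}\Big)^{\beta}\Phi(\sigma) + C\Big(\tfrac{\sigma}{\rho}\Big)^{N}\big(\Phi(\sigma)^{1+\delta} + \e_0\big),
\]
for $\Phi = A+E+C_{r,m}+D$ and some $\beta,\delta>0$; choosing the ratio $\rho/\sigma$ small and then $\e_0$ small, a standard iteration lemma gives $\Phi(\rho)\to 0$ as $\rho\to 0$, uniformly for cylinders centered in $Q_{1/2}$. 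Fifth, once $\|v\|_{L^r_tL^m_x(Q_\rho(z))}$ is as small as we like at some fixed small scale $\rho$ around every $z\in Q_{1/2}$ with $2/r+3/m<2$, a Serrin-type $\e$-regularity statement for \eqref{per.NS} (which follows by bootstrapping the heat-semigroup Duhamel formula, treating $(v\cdot\na)v$, $(v\cdot\na)h+(h\cdot\na)v$, $\na q$, $f$ as perturbations with the established bounds) gives $v\in L^\infty$ locally, and then parabolic Schauder/De Giorgi arguments give $v\in C^\alpha_{\mathrm{par}}(Q_{1/2})$ with $\alpha<1/2-1/r$ (the exponent restriction coming from the time-regularity of the force term, cf. $f\in L^{r/2}_tL^\infty_x$).

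\textbf{Main obstacle.} The hardest part is the pressure: since $q$ is only given in $L^{r/2}_tL^{m/2}_x$ (not better) and satisfies an elliptic equation with the non-local Riesz-transform structure, one must carefully split off the harmonic part on each ball and propagate its bound across scales without losing a power — the harmonic part is smooth in the interior but its size at small scales is only controlled through its value at the parent scale, so the gain factor $(\rho/\sigma)^\gamma$ must beat the loss factors $(\sigma/\rho)^N$ coming from the cutoff in the energy inequality; getting this balance, together with making sure the extra perturbation terms $h,f$ truly enter only multiplied by the small parameter $\e_0$ (and not multiplied by the unknown $\Phi(\sigma)$ in a way that breaks the iteration), is where the real care is needed. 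A secondary technical point is justifying all the manipulations (testing \eqref{LEI.v}, the pressure decomposition) at the level of distributions with the limited regularity in Definition \ref{def:suit}, which I would handle by the same mollification argument as in the proof of Lemma \ref{lem:decomp}.
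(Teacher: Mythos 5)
Your route is genuinely different from the paper's. The paper proves an oscillation (excess-decay) lemma, Lemma \ref{lem:osc}, by a compactness/blow-up argument in the spirit of Lin and Jia--\v{S}ver\'ak: a hypothetical sequence violating the decay is normalized, a limit is extracted which solves a linear Stokes-type system with spatially harmonic pressure, the limit's $C^\be_{\text{par}}$ regularity is obtained from the heat-semigroup estimates of Lemmas \ref{lem:semigp}--\ref{lem:semigp2} (this is exactly where the hypotheses $h\in L^r_tW^{1,\infty}_x$, $f\in L^{\frac r2}_tL^\infty_x$ and the restriction $\be<\min(1/2,1-2/r)$ enter), and the decay is transferred back to the sequence; Theorem \ref{thm:ep.reg} then follows by a Campanato-type iteration of this lemma. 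Your plan is instead a direct CKN-style iteration of scale-invariant quantities followed by a Serrin-type bootstrap, which is in principle viable (it is how the one-scale criteria cited in the introduction are proved), but two steps in your sketch are genuine gaps rather than routine details.

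First, the closed inequality $\Phi(\rho)\le C(\rho/\sigma)^{\be}\Phi(\sigma)+C(\sigma/\rho)^{N}(\Phi(\sigma)^{1+\delta}+\e_0)$ is asserted, not derived, and the linear decay factor $(\rho/\sigma)^{\be}$ is precisely what testing the local energy inequality and splitting the pressure cannot give: those steps yield at best persistence of smallness after interpolation and absorption (an inequality of the shape $\Phi(\theta\sigma)\le\tfrac12\Phi(\sigma)+C(\theta)\e_0^{a}$), whereas genuine geometric decay requires comparing $v$ with its mean, i.e.\ an oscillation-decay mechanism based on the regularity of the underlying linear problem --- which is the missing idea, and is the entire content of Lemma \ref{lem:osc}. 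This also matters for your conclusion: your quantities $A,E,C_{r,m},D$ carry no mean subtraction, so even if they decayed this would encode boundedness, not H\"older continuity, and you would still need an excess-decay step. Second, the endgame ``Duhamel bootstrap treating $\na q$ as a perturbation, then Schauder/De Giorgi'' does not apply as stated: $q$ is only in $L^{\frac r2}_tL^{\frac m2}_x$, so neither Schauder theory nor a semigroup estimate on $\na q$ is available. One must re-split $q$ into its Calder\'on--Zygmund part (absorbed into the divergence-form Duhamel term, where the second estimate of Lemma \ref{lem:semigp2} with $p=\infty$ uses $r>2$ and produces the exponent $\ga<1/2-1/r$) and a harmonic part whose gradient is locally bounded in space and treated as a force via the first estimate --- in other words, the same pressure decomposition and linear machinery the paper deploys inside the blow-up limit. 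With these two repairs your plan essentially becomes a de-compactified version of the paper's proof; as written, the central decay step and the passage from boundedness to $C^\al_{\text{par}}$ are not justified.
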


\begin{rem} In the case of $f=0$ and $r=m=3$ and under the smallness assumption on $h$ in $L^5(Q_1)$ or the assumption $h\in L^m(Q_1)$ for some $m>5$, $\e$-regularity criterions are obtained in \cite{JS14, KMT20, BP20}. The proof of Theorem \ref{thm:ep.reg} follows the scheme of the proofs in \cite{Lin98, JS14}.
\end{rem}

The proof of Theorem \ref{thm:ep.reg} is based on the following oscillation lemma.
\begin{lem}[Oscillation lemma]\label{lem:osc} Suppose $r,m \in (2,\infty)$ satisfy
\begin{align}\label{con.rm}
\frac 2r + \frac 3m = 2-\si \qquad \text{ for some }0<\si< \min\left(\frac16, \frac1r, \frac1m\right)  .
\end{align}
For any $\th\in (0, 1/3)$ and $\la\in (0,1]$, we can find positive constants $\e_1=\e_1(\th,r,m)$, $C=C(r,m)$, and $\be< \min(1/2, 1-2/r)$ such that  if a suitable weak solution $(v,q)$ to 
\begin{equation}\label{per.NS1}\begin{split}
\begin{cases}
\pa_t v + \la (v\cdot \na )v+ (v\cdot \na)h  +(h\cdot \na )v + \na q = \De v + f\\
\div v =0
\end{cases}
\end{split}\end{equation}
on $Q_1$ for some divergence-free functions $h$ and $f$ satisfies $|(v)_{Q_1}| \leq 1$ and
\begin{align}\label{small.e1}
\frac{\norm{v-(v)_{Q_1}}_{L^r_tL^m_x(Q_1)}}{|B_1|^{\frac 1m}} + \frac{\norm{q-(q)_{B_1}}_{L^\frac r2_tL^\frac m2_x(Q_1)}}{|B_1|^{\frac 2m}} 
+\norm{h}_{L^r_tW_x^{1,\infty}(Q_1)}
+ \norm{f}_{L^\frac r2_tL^\infty_x(Q_1)} \leq \e_1 
\end{align}
then 
\begin{align*}
&\frac{\norm{v-(v)_{Q_\th}}_{L^r_tL^m_x(Q_\th)}}{\th^{2-\si}|B_1|^{\frac 1m}} + \frac{\th\norm{q-(q)_{B_\th}}_{L^\frac r2_tL^\frac m2_x(Q_\th)}}{\th^{2(2-\si)}|B_1|^{\frac 2m}} 
\leq C \th^\be
W_{r,m}(v,q,h,f,1)
\end{align*}
where $W_{r,m}(v,q,h,f,1)$ denotes the left hand side of the inequality \eqref{small.e1}.
\end{lem}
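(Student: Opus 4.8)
The plan is to prove the oscillation lemma by a compactness–contradiction argument, which is the standard engine behind such "excess decay" estimates in the Lin/Ladyzhenskaya–Seregin tradition; the novelty here is only in carrying along the extra linear terms $(v\cdot\nabla)h+(h\cdot\nabla)v$ and the force $f$, so the work is mostly bookkeeping. Suppose the conclusion fails. Then for some fixed $\theta\in(0,1/3)$ there is a sequence of suitable weak solutions $(v_k,q_k)$ to \eqref{per.NS1} on $Q_1$ with coefficients $h_k$, $f_k$, with $|(v_k)_{Q_1}|\le 1$, with $\varepsilon_k:=W_{r,m}(v_k,q_k,h_k,f_k,1)\to 0$, but with the decayed quantity at scale $\theta$ bounded below by $C\theta^\beta \varepsilon_k$ for every $C$, i.e.\ violating the claimed inequality for any choice of $C$. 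Normalize: set $\bar v_k:=(v_k-(v_k)_{Q_1})/\varepsilon_k$, $\bar q_k:=(q_k-(q_k)_{B_1})/\varepsilon_k$, $\bar h_k:=h_k/\varepsilon_k$, $\bar f_k:=f_k/\varepsilon_k$, and write $a_k:=(v_k)_{Q_1}$, which after passing to a subsequence converges to some $a\in\R^3$ with $|a|\le 1$. By construction the rescaled quantities are bounded by $1$ in the norms appearing in \eqref{small.e1}, so after a further subsequence $\bar v_k\rightharpoonup \bar v$ weakly in $L^r_tL^m_x(Q_1)$, $\bar q_k\rightharpoonup \bar q$ weakly in $L^{r/2}_tL^{m/2}_x$, $\bar h_k\rightharpoonup 0$ and $\bar f_k\rightharpoonup 0$ in the relevant spaces (note $\|\bar h_k\|_{L^r_tW^{1,\infty}_x}\le 1$ gives weak-$*$ compactness, with limit that I will argue is harmless), and $\varepsilon_k\bar h_k=h_k\to 0$, $\varepsilon_k\bar f_k=f_k\to 0$ strongly.

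Next I would extract a strong limit equation. Plugging $v_k=a_k+\varepsilon_k\bar v_k$ into \eqref{per.NS1}, the quadratic term becomes $\lambda(v_k\cdot\nabla)v_k=\lambda\varepsilon_k(a_k\cdot\nabla)\bar v_k+\lambda\varepsilon_k^2(\bar v_k\cdot\nabla)\bar v_k$; dividing the whole equation by $\varepsilon_k$, the genuinely nonlinear term $\lambda\varepsilon_k(\bar v_k\cdot\nabla)\bar v_k$ is $O(\varepsilon_k)\to 0$ in the sense of distributions, and likewise the cross terms $(\bar v_k\cdot\nabla)\bar h_k+(\bar h_k\cdot\nabla)\bar v_k$ scaled by $\varepsilon_k$ vanish, as does $\bar f_k$ scaled by $\varepsilon_k$. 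Hence $\bar v$ solves the \emph{linear} Stokes-type system $\partial_t\bar v+\lambda(a\cdot\nabla)\bar v+\nabla\bar q=\Delta\bar v$, $\div\bar v=0$ on $Q_1$ — a constant-coefficient linear parabolic system after the Galilean change of variables $x\mapsto x-\lambda a t$. The suitable-weak-solution structure and the uniform energy bounds (from the local energy inequality \eqref{LEI.v}, which controls $\nabla v_k$ in $L^2(Q_{1/2})$ and hence $\nabla\bar v_k$, so one gets strong $L^2$ convergence of $\bar v_k$ on interior subcylinders by Aubin–Lions, using that $\partial_t\bar v_k$ is bounded in some negative-order space) upgrade the weak convergence to strong $L^2_{t,x}$ convergence on, say, $Q_{2/3}$, which is what is needed to pass the left-hand side of the desired inequality to the limit. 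Then interior regularity for the linear constant-coefficient system gives $\bar v\in C^\infty$ near the origin, so there is $\beta<\min(1/2,1-2/r)$ — indeed one can take $\beta$ arbitrarily close to $1$, limited only by the matching with the Hölder exponent $1/2-1/r$ downstream — and a universal $C_0$ with
\begin{align*}
\frac{\norm{\bar v-(\bar v)_{Q_\th}}_{L^r_tL^m_x(Q_\th)}}{\th^{2-\si}|B_1|^{1/m}}
\le C_0\,\th^{\be},\qquad
\frac{\th\norm{\bar q-(\bar q)_{B_\th}}_{L^{r/2}_tL^{m/2}_x(Q_\th)}}{\th^{2(2-\si)}|B_1|^{2/m}}\le C_0\,\th^{\be},
\end{align*}
because smooth functions have quadratic Taylor remainder and the left-hand side is a normalized oscillation that scales like $\th$ to a power $>1$. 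For the pressure one uses that $\bar q$ solves $-\Delta\bar q=\partial_i\partial_j(\text{smooth})=0$ here (the nonlinearity dropped), hence $\bar q$ is harmonic and the same decay holds. Now pass to the limit in the contradiction hypothesis: the normalized quantities at scale $\theta$ for $(v_k,q_k)$ converge to those for $(\bar v,\bar q)$ (the averages $(v_k)_{Q_\theta}$ are handled since constants drop out of oscillations, and the $h_k,f_k$ contributions to the left-hand side at scale $\theta$ are $\le\varepsilon_k\to 0$), giving that the limiting oscillation is $\le C_0\theta^\beta$, with $C_0$ universal; choosing $C>C_0$ at the outset contradicts the assumed lower bound $\ge C\theta^\beta$. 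This fixes $\varepsilon_1(\theta,r,m)$ by the usual argument (if no such $\varepsilon_1$ worked one builds exactly such a sequence).

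The main obstacle, and the step demanding the most care, is the \emph{strong compactness} of $\bar v_k$ and the rigorous passage to the limit in the local energy inequality: the hypotheses \eqref{small.e1} only give a priori control of $v_k$ in $L^r_tL^m_x$ (a subcritical but not energy-type norm) and of $q_k$ in $L^{r/2}_tL^{m/2}_x$, so one must first invoke the suitable-weak-solution structure — the local energy inequality \eqref{LEI.v}, together with the distributional equation — to bootstrap a uniform interior energy bound $\|\bar v_k\|_{L^\infty_tL^2_x(Q_{3/4})}+\|\nabla\bar v_k\|_{L^2(Q_{3/4})}\lesssim 1$, controlling the extra terms $\div(\bar h_k\otimes\bar v_k)\cdot\bar v_k$ and $\bar f_k\cdot\bar v_k$ via Hölder and the smallness/boundedness of $\bar h_k,\bar f_k$ in their norms (here the conditions $2/r+3/m<2$ and $\sigma$ small in \eqref{con.rm} are exactly what make these terms absorbable, e.g.\ by Young's inequality after interpolating $\bar v_k$ between $L^r_tL^m_x$ and the energy space). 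Only then does Aubin–Lions give strong $L^2_{t,x}(Q_{2/3})$ convergence, which one needs both to identify the limit equation (the term $(\bar v_k\cdot\nabla)\bar v_k$ converges in distributions) and to pass the left-hand side of the inequality to the limit. A secondary technical point is that one should be slightly careful that the weak limit $\bar h_\infty$ of $\bar h_k/1$ (not multiplied by $\varepsilon_k$) could be nonzero; but since it enters the limit equation only through $\varepsilon_\infty(\bar v\cdot\nabla)\bar h_\infty$-type terms with $\varepsilon_\infty=0$, and enters the left-hand side at scale $\theta$ with a prefactor $\varepsilon_k\to 0$ (because on the LHS it is $\|h_k\|=\varepsilon_k\|\bar h_k\|$, not the rescaled norm), it is genuinely harmless — this is worth a sentence in the write-up. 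Everything else (the Galilean change of variables to kill $(a\cdot\nabla)$, interior estimates for the constant-coefficient Stokes system, harmonicity of $\bar q$, the elementary Taylor-remainder decay of normalized oscillations for smooth functions) is standard.
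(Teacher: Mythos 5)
Your overall scheme -- compactness--contradiction, normalize by $\varepsilon_k$, extract a limit, show the limit is smooth enough to get power decay, contradict -- is the one the paper uses. But there is a genuine error in how you treat the perturbation terms in the limit, and this error is precisely what would make your argument break.

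You claim that after dividing \eqref{per.NS1} by $\varepsilon_k$, the terms coming from $h_k$ and $f_k$ vanish, so the limit equation is a constant-coefficient Stokes system and $\bar v$ is $C^\infty$. That is not correct. Writing $v_k=a_k+\varepsilon_k\bar v_k$, $h_k=\varepsilon_k\bar h_k$, $f_k=\varepsilon_k\bar f_k$ and dividing by $\varepsilon_k$ gives
\begin{align*}
\partial_t\bar v_k+\lambda(a_k\cdot\nabla)\bar v_k+\lambda\varepsilon_k(\bar v_k\cdot\nabla)\bar v_k
+(a_k\cdot\nabla)\bar h_k+\varepsilon_k(\bar v_k\cdot\nabla)\bar h_k+\varepsilon_k(\bar h_k\cdot\nabla)\bar v_k+\nabla\bar q_k
=\Delta\bar v_k+\bar f_k .
\end{align*}
The terms $(a_k\cdot\nabla)\bar h_k$ and $\bar f_k$ carry no factor of $\varepsilon_k$; since $\bar h_k$ and $\bar f_k$ are merely \emph{bounded} (by the normalization $W_{r,m}(\cdot,1)=\varepsilon_k$, not small after dividing by $\varepsilon_k$), they converge weak-$*$ to nonzero limits $H^{(1)}=\nabla H$ and $F$ in $L^r_tL^\infty_x$ and $L^{r/2}_tL^\infty_x$. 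So the limit system is the \emph{generalized} Stokes system
\begin{align*}
\partial_t V+\nabla P-\Delta V=-\div(V\otimes\lambda\kappa)-\kappa\,\nabla H+F,\qquad \div V=0,\quad -\Delta P=0,
\end{align*}
with inhomogeneous source terms of only $L^{r/2}_tL^\infty_x$ (resp.\ $L^r_tL^\infty_x$) regularity. Consequently $V$ is \emph{not} $C^\infty$; by heat-semigroup estimates one gets $V\in C^{\beta}_{\mathrm{par}}$ for $\beta<\min(1/2,\,1-2/r)$, and this bound on $\beta$ is a genuine constraint imposed by the regularity of the source (your remark that $\beta<1-2/r$ is merely a ``downstream matching'' is false). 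Your parenthetical about $\bar h_\infty$ possibly being nonzero but ``entering only through $\varepsilon_\infty(\bar v\cdot\nabla)\bar h_\infty$-type terms'' misses the $O(1)$ term $(a_k\cdot\nabla)\bar h_k$ and the $O(1)$ force $\bar f_k$.

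A secondary gap: you propose to pass the pressure oscillation to the limit using harmonicity of $\bar q$. But you only have weak convergence of $\bar q_k$, which does not let you pass the normalized $L^{r/2}_tL^{m/2}_x$-oscillation at scale $\theta$ to the limit directly. The paper instead decomposes $P_k=P_{k,o}+P_{k,h}$ on $Q_{3/4}$ (a Riesz-transform part and a harmonic remainder), shows $P_{k,o}\to 0$ strongly because of the extra factor $\varepsilon_k$, and estimates the harmonic part $P_{k,h}$ at scale $\theta$ by interior elliptic estimates -- a finite-$k$ argument, not a pass-to-the-limit argument. You should do the same. The remainder of your proposal (uniform interior energy bound via the local energy inequality plus interpolation, Aubin--Lions compactness to get strong $L^r_tL^m_x$ convergence of $\bar v_k$, final iteration) is in line with the paper's proof.
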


We first introduce preliminary lemmas. 
\begin{lem}\label{lem:itr}
Suppose that $A$ is a non-negative non-decreasing bounded function on $[0,1]$ and satisfies
\begin{align*}
A(\rho) \leq \th A(R) + \frac{M}{(R-\rho)^\al} \quad\text{ for any}\quad\frac 34\leq \rho\leq R< 1
\end{align*}
for some $\th \in (0,1)$, $M\in (0,\infty)$, $\al>0$. Then,  $A$ satisfies 
\begin{align*}
\sup_{\rho\in [0,3/4]}A(\rho) \leq C(\th, M, \al)
\end{align*}
for some positive constant $C(\th, M, \al)$ depending only on $\th, M, \al$. 
\end{lem}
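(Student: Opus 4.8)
Since $A$ is non-decreasing on $[0,1]$, we have $\sup_{\rho\in[0,3/4]}A(\rho)=A(3/4)$, so it suffices to bound $A(3/4)$ by a constant depending only on $\th,M,\al$. The idea is the standard hole-filling iteration of Giaquinta: build an increasing sequence of radii $\rho_i\nearrow 1$ starting at $\rho_0=3/4$, apply the hypothesis on each consecutive pair, and iterate, choosing the geometric ratio so that the resulting series converges.

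\textbf{Step 1: set up the radii.} Fix a parameter $\ta\in(0,1)$ to be chosen, and set $\rho_i := 1-\tfrac14\ta^i$ for $i\in\N\cup\{0\}$. Then $\rho_0=3/4$, each $\rho_i<1$, the sequence is increasing with $\rho_{i+1}-\rho_i=\tfrac14\ta^i(1-\ta)$, and $\rho_i\to 1$. Applying the hypothesis with $\rho=\rho_i$ and $R=\rho_{i+1}$ (both lie in $[3/4,1)$) gives
\begin{align*}
A(\rho_i)\leq \th\, A(\rho_{i+1})+\frac{4^\al M}{(1-\ta)^\al}\,\ta^{-i\al}.
\end{align*}

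\textbf{Step 2: iterate and sum.} Iterating this inequality from $i=0$ to $i=k-1$ yields
\begin{align*}
A(3/4)=A(\rho_0)\leq \th^k A(\rho_k)+\frac{4^\al M}{(1-\ta)^\al}\sum_{i=0}^{k-1}\bigl(\th\,\ta^{-\al}\bigr)^i.
\end{align*}
Here the key point is to choose $\ta$ so that $\th\,\ta^{-\al}<1$; since $\th\in(0,1)$, the choice $\ta:=\bigl(\tfrac{1+\th}{2}\bigr)^{1/\al}\in(0,1)$ works, as then $\ta^\al=\tfrac{1+\th}{2}>\th$. Consequently the partial sums are bounded by $\bigl(1-\th\ta^{-\al}\bigr)^{-1}$, uniformly in $k$. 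Moreover, because $A$ is bounded on $[0,1]$ and $\th<1$, the term $\th^k A(\rho_k)\to 0$ as $k\to\infty$. Letting $k\to\infty$ gives
\begin{align*}
A(3/4)\leq \frac{4^\al M}{(1-\ta)^\al}\cdot\frac{1}{1-\th\ta^{-\al}}=:C(\th,M,\al),
\end{align*}
and since $A$ is non-decreasing this also bounds $\sup_{\rho\in[0,3/4]}A(\rho)$, completing the proof.

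\textbf{Main obstacle.} There is no serious difficulty; the only thing requiring care is the selection of the geometric ratio $\ta$ so that $\th\,\ta^{-\al}<1$ (which is exactly where the hypothesis $\th<1$ is used), together with the observation that boundedness of $A$ is needed solely to discard the tail term $\th^k A(\rho_k)$ in the limit, and monotonicity of $A$ only to pass from the bound on $A(3/4)$ to the bound on the supremum over $[0,3/4]$.
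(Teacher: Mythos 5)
Your proof is correct and is exactly the standard hole-filling iteration argument; the paper gives no proof of its own but cites Evans's iteration lemma, whose proof proceeds in the same way (geometric sequence of radii with ratio chosen so that $\th\ta^{-\al}<1$, summing the geometric series, and using boundedness of $A$ to discard the tail term). Nothing further is needed.
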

\noindent For the proof, see \cite{Evans86}. Also, the following lemma will be used to relax the smallness assumption from the classical ones. Similar estimates appear in \cite{DoWa21}. 

\begin{lem}\label{pre.relax} Let $3/4\le \overline \rho<R<1$, and $
P_o= (-\De)^{-1}\div\div \left( ( G\otimes U + U\otimes G)\chi_{B_R}\right),
$
where $\chi_{E}$ is the characteristic function of a set $E$. 
For any $r,m$ satisfying the same assumption as in Lemma \ref{lem:osc}, we have
\begin{align}
\norm{U}_{L^{2r'}_t L^{2m'}_x(Q_R)} 
&\lec \norm{U}_{L^r_tL^m_x(Q_R)}^\upgamma
\norm{U}_{\mathcal{E}(Q_R)}^{1-\upgamma}  \label{est.Vi}\\
\norm{P_o}_{L^{{\overline r}'}(-{\overline\rho}^2,0;L^{{\overline m}'} (\R^3))}
&\lec_{r,m} \norm{G}_{L^r_tL^m_x(Q_R)}\norm{U}_{L^r_tL^m_x(Q_R)}^{2\upgamma} \norm{U}_{\mathcal{E}(Q_R)}^{1-2\upgamma}\label{est.Pio}
\end{align}
where $\norm{U}_{\mathcal{E}(Q_R)}^2:=\norm{U}_{L^\infty_tL^2_x(Q_R)}^2 + \norm{\na U}_{L^2(Q_R)}^2$,  the parameters $\upgamma, \overline r, \overline m$ are defined by
\begin{align*}
\upgamma = \frac{\si}{1-2\si}, \quad
\overline r =\frac{2r (1-3\si)}{r (1-2\si)-1}, \quad
\overline m = \frac{2m (1-3\si)}{m (1-2\si)-1},
\end{align*}
and $r'$, $m'$, $\overline r'$, $\overline m'$  are the H\"older conjugates of $r$, $m$, $\overline r$, $\overline m$,  respectively.
\end{lem}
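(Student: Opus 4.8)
The plan is to obtain both estimates by interpolation between the energy class and the given $L^r_tL^m_x$ bound, followed by Calder\'on--Zygmund theory for the pressure term. First I would establish \eqref{est.Vi}. The point is that $(2r', 2m')$ is an admissible pair in the sense that $\frac1{2m'}$ and $\frac1{2r'}$ lie between the exponents of the energy space and of $L^r_tL^m_x$. Concretely, writing $\frac{1}{2m'} = \frac{\theta}{m} + (1-\theta)\left(\frac16 + \text{something}\right)$ — one uses the Sobolev embedding $\dot H^1_x(\R^3)\hookrightarrow L^6_x$ so that $U\in L^2_tL^6_x \cap L^\infty_tL^2_x$, hence by interpolation $U\in L^p_tL^q_x$ for any pair with $\frac2p + \frac3q = \frac32$; spatial H\"older interpolation between this and $L^r_tL^m_x$ gives $L^{2r'}_tL^{2m'}_x$ with the stated exponent $\upgamma$. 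Here I would verify, using \eqref{con.rm}, that the algebra forces precisely $\upgamma = \sigma/(1-2\sigma)$; the constraint $\sigma < \min(1/6,1/r,1/m)$ is exactly what makes $\upgamma\in(0,1)$ and the intermediate pair admissible (in particular $2m' < 6$ and $2r' < \infty$, so the interpolation is legitimate).

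For \eqref{est.Pio}, the operator $(-\Delta)^{-1}\div\div$ is a zeroth-order Calder\'on--Zygmund operator on $\R^3$, so $\norm{P_o(t,\cdot)}_{L^{p}(\R^3)} \lec \norm{(G\otimes U + U\otimes G)(t,\cdot)\chi_{B_R}}_{L^p(\R^3)}$ for any $p\in(1,\infty)$. Then I apply H\"older in space: $\norm{GU\chi_{B_R}}_{L^{\overline m'}_x} \le \norm{G}_{L^m_x(B_R)}\norm{U}_{L^{q}_x(B_R)}$ where $\frac{1}{\overline m'} = \frac1m + \frac1q$, and similarly H\"older in time with exponents matching $r$ and $\overline r'$. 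Plugging in the definitions of $\overline r, \overline m$, one computes that the leftover exponent $q$ in space (and the corresponding one in time) is exactly $2m'$ (resp. $2r'$), so that the second factor is $\norm{U}_{L^{2r'}_tL^{2m'}_x(Q_R)}$, which is then bounded by \eqref{est.Vi}. Combining gives the factor $\norm{U}_{L^r_tL^m_x(Q_R)}^{2\upgamma}\norm{U}_{\mathcal E(Q_R)}^{1-2\upgamma}$ as claimed. I would double-check the admissibility condition $1-2\upgamma > 0$, i.e. $\sigma < 1/4$, which again follows from $\sigma < 1/6$.

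The main obstacle is bookkeeping: one must verify that with the specific formulas for $\upgamma, \overline r, \overline m$ the various H\"older and interpolation exponents close up exactly, and that all of them stay in the legitimate ranges $(1,\infty)$ — in particular that $\overline r' \ge 1$ and the localization to $Q_R$ with $3/4 \le \overline\rho < R < 1$ causes no loss (it does not, since cutting off by $\chi_{B_R}$ only decreases $L^p$ norms and the CZ estimate is on all of $\R^3$). No genuinely hard analytic input is needed beyond Sobolev embedding, Gagliardo--Nirenberg-type interpolation, and boundedness of Riesz-type transforms; the content is that the exponent identities in \eqref{con.rm} and the definitions of $\upgamma, \overline r, \overline m$ are engineered to make this work, and one should present the computation of $\upgamma$ carefully since it is reused in Lemma \ref{lem:osc}.
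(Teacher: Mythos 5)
Your strategy for \eqref{est.Vi} matches the paper's: interpolate between $L^r_tL^m_x$ and a pair $(\overline r,\overline m)$ on the energy scaling line $\tfrac{2}{\overline r}+\tfrac{3}{\overline m}=\tfrac32$, which is accessible from $\norm{U}_{\mathcal E(Q_R)}$ via Sobolev embedding and Gagliardo--Nirenberg. That part is fine.

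The step for \eqref{est.Pio} contains a concrete error. After Calder\'on--Zygmund and H\"older, the leftover space exponent $q$, defined by $\tfrac1{\overline m'}=\tfrac1m+\tfrac1q$, equals $n$ with $\tfrac1n = 1 - \tfrac1{\overline m} - \tfrac1m$, and this is \emph{not} $2m'$ in general. For instance, with $\sigma=1/10$, $m=4$ one finds $2m'=8/3\approx 2.67$ while $n=14/5=2.8$; the two agree only when $m=3$. So the reduction to \eqref{est.Vi} does not go through. The proposal is also internally inconsistent: even if $q=2m'$ held, plugging \eqref{est.Vi} would produce $\norm{U}_{L^r_tL^m_x}^{\upgamma}\norm{U}_{\mathcal E}^{1-\upgamma}$, and multiplying by $\norm{G}_{L^r_tL^m_x}$ would yield exponent $\upgamma$, not $2\upgamma$, on the $L^r_tL^m_x$ factor, contradicting \eqref{est.Pio}. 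What the paper does instead is interpolate the leftover norm $\norm{U}_{L^s_tL^n_x}$ (with $\tfrac1s = 1-\tfrac1r-\tfrac1{\overline r}$, $\tfrac1n = 1-\tfrac1m-\tfrac1{\overline m}$) directly between $(r,m)$ and $(\overline r,\overline m)$, and the algebra \eqref{con.rm} is rigged so that the interpolation weight is precisely $2\upgamma$ (not $\upgamma$); one checks $\tfrac1s = \tfrac{2\upgamma}{r}+\tfrac{1-2\upgamma}{\overline r}$ and $\tfrac1n = \tfrac{2\upgamma}{m}+\tfrac{1-2\upgamma}{\overline m}$. You should redo the bookkeeping with that identity rather than trying to recycle \eqref{est.Vi} as a black box.
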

\begin{proof} The proof is based on the standard interpolations. Since \eqref{con.rm} implies that
\begin{align*}
\frac 1r +\frac 2m >1, \quad \frac 1r + \frac 1m \geq \frac 23
\end{align*}
we have $\upgamma\in (0,1)$, $\overline r\in[2,\infty]$, and $\overline m \in [2,6]$. Also, we note that $(\overline r, \overline m)$ satisfies 
\begin{align*}
\frac 2{\overline r} + \frac 3{\overline m} = \frac 32, \quad
\frac 1{2r'} = \frac \upgamma r + \frac{(1-\upgamma)}{\overline r}, \quad
\frac 1{2m'} = \frac \upgamma m + \frac{(1-\upgamma)}{\overline m}.
\end{align*}
For such $(\overline r, \overline m)$, we recall that
\begin{align}\label{int.energy}
\norm{U}_{L^{\overline r}_tL^{\overline m}_x(Q_R)}
\lec \norm{U}_{\mathcal{E}(Q_R)},
\end{align}
which can be obtained by the interpolations. Then, \eqref{est.Vi} follows from the interpolations;
\begin{align*}
\norm{U}_{L^{2r'}_t L^{2m'}_x(Q_R)} 
&\lec \norm{U}_{L^r_tL^m_x(Q_R)}^\upgamma 
\norm{U}_{L^{\overline r}_tL^{\overline m}_x(Q_R)}^{1-\upgamma}
\lec \norm{U}_{L^r_tL^m_x(Q_R)}^\upgamma
\norm{U}_{\mathcal{E}(Q_R)}^{1-\upgamma}.
\end{align*}
To estimate \eqref{est.Pio}, we use the $L^p$-boundedness of Riesz-transforms and interpolations;
\begin{align*}
\norm{P_o}_{L^{{\overline r}'}(-{\overline\rho}^2,0;L^{{\overline m}'} (\R^3))}
&\lec_{\overline m} \norm{U\otimes G + G\otimes U}_{L^{{\overline r}'}_tL^{{\overline m}'}_x(Q_R)}
\lec_{\overline m} \norm{G}_{L^r_tL^m_x(Q_R)} \norm{U}_{L^s_t L^n_x(Q_R)}\\
&\lec_{\overline m} \norm{G}_{L^r_tL^m_x(Q_R)}\norm{U}_{L^r_tL^m_x(Q_R)}^{2\upgamma} \norm{U}_{\mathcal{E}(Q_R)}^{1-2\upgamma}
\end{align*}
where $1/s = 1- 1/r - 1/\overline r$ and $1/n = 1- 1/m - 1/\overline m$, and the last inequality is obtained similar to \eqref{est.Vi}.
\end{proof}

\medskip

We now give a proof of the oscillation lemma. 
\begin{proof}[Proof of Lemma \ref{lem:osc}.] 
The proof is based on the contradiction argument. Suppose that there exist a sequence $(v_i,q_i)$ of suitable weak solutions to \eqref{per.NS1} with some $\la\in (0,1]$ and $(h_i,f_i)$ such that $\div h_i=\div f_i =0$, $|(v_i)_{Q_1}|\le 1$, and
\begin{align*}
W_{r,m}(v_i, q_i, h_i, f_i,1)  = \e_i \searrow 0
\end{align*}
as $i$ goes to infinity, but there exists $\th\in (0,1/3)$ such that
\begin{align}\label{osc.est}
&\frac{\norm{v_i-(v_i)_{Q_\th}}_{L^r_tL^m_x(Q_\th)}}{\th^{2-\si}|B_1|^{\frac 1m}} + \frac{\th\norm{q_i-(q_i)_{B_\th}}_{L^\frac r2_tL^\frac m2_x(Q_\th)}}{\th^{2(2-\si)}|B_1|^{\frac 2m}} 
\geq C \th^\be\e_i, \quad i\in \mathbb{N}\cup\{0\}
\end{align}
where $\be\in (0,1)$ and $C>0$ will be chosen later.

Define $(V_i, P_i, H_i, F_i)$ by
\begin{align*}
V_i = \frac{v_i -(v_i)_{Q_1}}{\e_i}, \quad
P_i = \frac{q_i - (q_i)_{B_1}}{\e_i},\quad H_i = \frac{h_i}{\e_i}, \quad
F_i = \frac{f_i}{\e_i}.
\end{align*} 
Then, it satisfies $\div V_i =\div H_i = \div F_i=0$, 
\begin{align*}
\pa_t V_i + \div \left((\la V_i+H_i)\otimes (\e_i V_i+(v_i)_{Q_1}) + \e_i V_i\otimes H_i\right) + \na P_i = \De V_i + F_i,
\end{align*}
and 
\begin{equation}\label{LEI.V}\begin{split}
(\pa_t -\De) \left(\frac12 |V_i|^2\right)
&+|\na V_i|^2
+\div \left(\frac12|V_i|^2 (\la \e_i V_i + \la (v_i)_{Q_1} +\e_i H_i)\right)\\
& + (((\e_iV_i + (v_i)_{Q_1})\cdot\na)H_i) \cdot V_i + 
\div (P_i V_i) -  F_i V_i\le 0
\end{split}\end{equation}
on $Q_1$ in distribution senses. In particular, $P_i$ solves
\begin{align}\label{eqn.Pi}
-\De P_i =  \e_i\div\div \left( \la V_i\otimes V_i + H_i\otimes V_i + V_i\otimes H_i\right) 
\end{align}
on $Q_1$. Here, we used $\div\div(V_i \otimes \la(v_i)_{Q_1})=\div\div(H_i \otimes (v_i)_{Q_1})=0$ because $(v_i)_{Q_1}$ is a constant and both $V_i$ and $H_i$ are divergence-free. Furthermore, by the assumption, we have
\begin{align}\label{bdd.i}
\norm{V_i}_{L^r_tL^m_x(Q_1)} + \norm{P_i}_{L^{\frac r2}_tL^{\frac m2}_x(Q_1)}
+\norm{H_i}_{L^r_tW^{1,\infty}_x(Q_1)} + \norm{F_i}_{L^{\frac r2}_tL^\infty_x(Q_1)} \lec 1.
\end{align}

\medskip

\noindent\texttt{Step 1.} Passage to the limit.

Using \eqref{LEI.V} and Lemma \ref{lem:itr}-\ref{pre.relax}, we first obtain a uniform bound of $V_i$ in the energy space norm. Let $3/4\le \rho<R<1$ and $\overline \rho= (\rho+R)/2$. We decompose $P_i$ on $Q_R$ into $P_i= P_{io} + P_{ih}$ where
\begin{align*}
P_{io} = \e_i(-\De)^{-1}\div\div \left( (\la V_i\otimes V_i +  H_i\otimes V_i + V_i\otimes H_i)\chi_{B_R}\right),
\end{align*}
for the characteristic function $\chi_{E}$ of a set $E$. Then, $P_{ih}$ is harmonic on $B_R$ so that we have
\begin{align}\label{est.Pih}
\norm{P_{ih}(\cdot, t)}_{L^\infty(B_{\overline \rho})}
\lec \frac {\norm{P_{ih}(\cdot,t)}_{L^1(B_R)}}{(R-\overline \rho)^3}
\leq  \frac {\norm{P_i(\cdot,t)}_{L^1(B_R)}}{(R-\rho)^3}
+\frac {\norm{P_{io}(\cdot,t)}_{L^1(B_R)}}{(R-\rho)^3}
\end{align}
for a.e. $t\in (-{\overline \rho}^2,0)$. Then, by Lemma \ref{pre.relax}, \eqref{int.energy}, \eqref{bdd.i} and \eqref{est.Pih}, we have
\begin{align*}
\int_{Q_R} |V_i|^2 ( \la|V_i|+ |H_i| + |\na H_i|)\, \d x \d t 
&\lec (\norm{V_i}_{L^r_tL^m_x(Q_R)} + \norm{H_i}_{L^r_tW^{1,\infty}_x(Q_R)})
\norm{V_i}_{L^{2r'}_tL^{2m'}_x(Q_R)}^2\\
&\lec\norm{V_i}_{L^r_tL^m_x(Q_R)}^{2\upgamma}\norm{V_i}_{\mathcal{E}(Q_R)}^{2(1-\upgamma)}
\lec \norm{V_i}_{\mathcal{E}(Q_R)}^{2(1-\upgamma)}.
\end{align*}
and 
\begin{align*}
&\int_{Q_{\overline \rho}} |P_i||V_i| \,\d x \d t
\lec \norm{P_{io}}_{L^{\overline r'}_tL^{\overline m'}_x(Q_{\overline \rho})}\norm{V_i}_{L^{\overline r}_tL^{\overline{m}}_x(Q_R)} 
 +\norm{P_{ih}}_{L^1_tL^\infty_x(Q_{\overline \rho})}\norm{V_i}_{L^\infty_tL^1_x(Q_R)}\\
&\quad\lec_{r,m} \e_i(\norm{V_i}_{L^r_tL^m_x(Q_R)}+\norm{H_i}_{L^r_tL^\infty_x(Q_R)})\norm{V_i}_{L^r_tL^m_x(Q_R)}^{2\upgamma} \norm{V_i}_{\mathcal{E}(Q_R)}^{2-2\upgamma}\\
&\hspace{8cm} + \frac {\norm{V_i}_{\mathcal{E}(Q_R)}}{(R-\rho)^3}( \norm{P_i}_{L^1(Q_R)} +\norm{P_{io}}_{L^1(-{\overline \rho}^2,0;L^1(\R^3))})\\
&\quad\lec_{r,m}\frac {\e_i \norm{V_i}_{\mathcal{E}(Q_R)}^{2-2\upgamma}}{(R-\rho)^3}(\norm{V_i}_{L^r_tL^m_x(Q_R)}+\norm{H_i}_{L^r_tL^\infty_x(Q_R)}) \norm{V_i}_{L^r_tL^m_x(Q_R)}^{2\upgamma}+ \frac {\norm{V_i}_{\mathcal{E}(Q_R)}}{(R-\rho)^3}\norm{P_i}_{L^{\frac r2}_tL^{\frac m2}_x(Q_1)}\\
&\quad\lec_{r,m} \frac {\e_i}{(R-\rho)^3}\norm{V_i}_{\mathcal{E}(Q_R)}^{2-2\upgamma} +  \frac {\norm{V_i}_{\mathcal{E}(Q_R)}}{(R-\rho)^3}.
\end{align*}

Since $V_i(t,\cdot)$ has weak continuity in $L^2(Q_1)$ (by redefining a measure zero set in time if necessary), testing a smooth cut-off $\xi$ with $\xi=1$ on $Q_\rho$ and $\supp(\xi)\subset Q_{\overline \rho}^*$ on \eqref{LEI.V}, we have 
\begin{align*}
\norm{V_i}_{\mathcal{E}(Q_\rho)}^2
&\lec \frac{\norm{V_i}_{L^2(Q_1)}^2 }{(R-\rho)^2}
+ \frac{\e_i}{R-\rho} \int_{Q_R} |V_i|^2( \la|V_i| +|H_i| + |\na H_i|)\, \d x \d t  + \frac 1{R-\rho} \int_{Q_{\overline \rho}} |P_i||V_i| \,\d x \d t\\
 &\quad+\norm{H_i}_{L^r_tW^{1,\infty}_x(Q_1)}\norm{V_i}_{L^{r'}_tL^1_x(Q_R)} + \norm{F_i}_{L^1_tL^\infty_x(Q_1)}\norm{V_i}_{\mathcal{E}(Q_R)}\\
 &\leq \frac 12 \norm{V_i}_{\mathcal{E}(Q_R)}^{2} + \frac{\tilde M(r,m) }{(R-\rho)^{4\max(2,1/\upgamma) }}
\end{align*}
for sufficiently large $i$ to have $\e_i\leq 1$ for some positive constant $\tilde M(r,m)$ depending only on $r$ and $m$. The last inequality follows from \eqref{bdd.i} and Young's inequality. Then, it follows from Lemma \ref{lem:itr} that
\begin{align*}
\norm{V_i}_{\mathcal{E}(Q_{3/4})} \lec_{r,m} 1.
\end{align*}
for sufficiently large $i$.
Using interpolation inequality, we then have
\begin{align}\label{bdd.Vi}
\norm{V_i}_{L^{\frac{10}3}(Q_{3/4})} + \norm{V_i}_{L^{\td r}_tL^{\td m}_x(Q_{3/4})} \lec_{r,m} 1,
\end{align}
where $\td r = {2(2-\si)r}/3>r$ and $\td m= {2(2-\si)m}/3>m$.
Therefore, by the compactness argument, \eqref{bdd.i} and \eqref{bdd.Vi} imply the following convergences
\begin{align*}
V_i \to& V \quad\text{strongly in }L^3(Q_{3/4})\text{ and } L^r_tL^m_x(Q_{3/4}), \quad (v_i)_{Q_1} \to \ka \quad\text{in } \R^3\\
P_i &\rightharpoonup P \quad\text{weakly in }L^{\frac r2}_t L^{\frac m2}_x(Q_1), \quad
F_i \overset{\ast}{\rightharpoonup} F \quad\text{weakly in } L^\frac r2_tL^\infty_x(Q_1)\\
&\quad H_i \overset{\ast}{\rightharpoonup} H \quad\text{and} \quad 
\na H_i \overset{\ast}{\rightharpoonup} H^{(1)} \quad\text{weakly in }L^r_tL^\infty_x(Q_1)
\end{align*}
up to subsequence, where the limit pair $(V,P,\ka, H, H^{(1)}, F)$ satisfies
\begin{align*}
&\norm{V}_{L^r_tL^m_x(Q_{3/4})}
 +\, \norm{P}_{L^\frac r2_t L^\frac m2 _x(Q_1)}
+\norm{H}_{L^r_tL^{\infty}_x(Q_1)}+\norm{H^{(1)}}_{L^r_tL^{\infty}_x(Q_1)} + \norm{F}_{L^\frac r2_tL^\infty_x(Q_1)} \lec 1,\\
&\hspace{3cm}\norm{V}_{L^3(Q_{3/4})} \lec_{r,m} 1, \quad
|\ka |\le 1, \quad \div H=\div F =0.
\end{align*}
Here, we denote $(\na g)_{ij} = \pa_i g_j$. 
Also, the limit pair solves a generalized Stokes system on $Q_{3/4}$
\begin{equation}\label{Stokes.V}
\begin{cases}
\pa_t V +\na P -\De V 
= -\div ( V\otimes \la\ka) - \ka H^{(1)} + F\\
\div V =0
\end{cases}
\end{equation}
in distribution sense, and $P$ satisfies $-\De P=0$ on $Q_{3/4}$. Here, $(\ka H^{(1)})_m = \ka_\ell H^{(1)}_{\ell m}$. 

\bigskip

\noindent\texttt{Step 2.} Regularity of the generalized Stokes system. 

Let $1/2\leq \rho<R< 3/4$. Let $\zeta$ be a smooth cut-off such that $\zeta =1$ on $Q_{\overline \rho}$, ${\overline \rho} = (r+R)/2$, and $\supp(\zeta)\subset Q_{R}^*$. 
We then rewrite \eqref{Stokes.V} on $Q_{\overline \rho}$ as 
\begin{align*}
(\pa_t -\De)(V\zeta)
=&\, \underbrace{-\div( V\otimes \la\ka \zeta ) 
}_{= I_1}
+\underbrace{F\zeta - \ka H^{(1)} \zeta  -\zeta\na P}_{=I_2}
\end{align*}
and decompose $V$ into $V= V\zeta=V_1 + V_2 + V_3 $ on $Q_{\overline \rho}$ by defining
\begin{align*}
V_i(t,x) = \int_{-1}^t e^{(t-\tau)\De} I_i(\tau,x) \d \tau \quad \text{on }Q_{\overline \rho}, \quad \forall i =1,2.
\end{align*}
By Lemma \ref{lem:semigp}, we have	
\begin{align*}
\norm{V_1}_{L^q(Q_{\overline \rho})}
\lec_{p,q} {\norm{|\la\ka| |V|\zeta}_{L^p([-1,0]\times \R^3)}}
\lec \norm{V}_{L^p(Q_R)}
\end{align*}
for $1\le p<q<\infty$ such that $\frac 1q = \frac 1p-\frac15 $.
To estimate $V_2$, we note that the harmonicity of $P$ on $Q_{3/4}$ implies that 
\begin{align*}
\norm{\na P}_{L^\frac r2_tL^\infty_x(Q_R)}
\lec_R \norm{P}_{L^\frac r2_tL^\frac m2_x (Q_{3/4})}
\lec_R 1
\end{align*}
by elliptic estimates.
Then, by Lemma \ref{lem:semigp2}, we get
\begin{align*}
\norm{V_2}_{C^\be_{\text{par}}(Q_{\overline \rho})}
\lec \norm{F}_{L^\frac r2_tL^\infty_x(Q_R)}
+\norm{ H^{(1)}}_{L^r_tL^{\infty}_x(Q_R)} 
+ \norm{\na P}_{L^\frac r2_tL^\infty_x(Q_R)}
\lec 1
\end{align*}
for some $\be\in (0,\min(1-2/r, 1/2))$. Lastly, since $V_3$ solves $(\pa_t-\De)V_3 =0$ on $Q_{\overline \rho}$, it is smooth in $Q_r$; in particular, $\norm{V_3}_{C^1_{t,x}(Q_r)} \lec_{r,R} \norm{V_3}_{L^1(Q_{\overline \rho})}\lec 1$. 

Since $q>p$, starting with $p=3$, after a finite number of iterations, we get $\norm{V}_{L^{\frac{15}2}(Q_{5/8})} \lec_{r,m} 1$ and hence by Lemma \ref{lem:semigp2}
\begin{align*}
\norm{V}_{C^\be_\text{par}(Q_{1/2})} \lec_{r,m} 1.
\end{align*}
In particular, we have
\begin{align*}
|V(t,x) -V(s,y)| \lec_{r,m} (|x-y| + |t-s|^{\frac12})^\be
\end{align*}
for any $(t,x),(s,y)\in Q_{1/2}$ for some $\be\in (0,\min (1/2, 1-2/r))$. This and the strong convergence of $V_i$ in $L^r_tL^m_x(Q_{3/4})$ then implies
\begin{align}\label{osc.vi}
&\frac 1{\e_i}\left(\dint_{-\th^2}^0\left(\dint_{B_\th} |v_i-(v_i)_{Q_\th}|^m\d x\right)^{\frac r m} \d t \right)^\frac1r\\
&= \left(\dint_{-\th^2}^0\left(\dint_{B_\th} \left|\dint_{Q_\th} V_i(t,x) -V_i(s,y) \,\d y \d s \right |^m\d x\right)^{\frac r m} \d t \right)^\frac1r
\lec_{r,m} \th^\be 
\end{align}
for sufficiently large $i$. 

Recall that $P_i$ solves \eqref{eqn.Pi} and can be decomposed into $P_{io}$ and $P_{ih}$ on $Q_{3/4}$ where
\begin{align*}
P_{io}:= \e_i(-\De)^{-1}\div\div((\la  V_i\otimes V_i + H_i\otimes V_i +  V_i\otimes H_i)\chi_{Q_{3/4}})
\end{align*}
and hence $\De P_{ih} = 0$ on $Q_{3/4}$. Using \eqref{bdd.i}, one can easily see that $P_{io}$ strongly converges to $0$ in $L^\frac r2_tL^\frac m2_x(Q_{3/4})$ as $i$ goes to infinity. Therefore, we get
\begin{align*}
&\frac{\th}{\e_i}\left(\dint_{-\th^2}^0\left(\dint_{B_\th} |q_i-(q_i)_{B_\th}|^\frac m2 \d x\right)^\frac rm \d t 
\right)^\frac 2r\\
&\quad\lec \th^{-3 + 2\si}
\Norm{\dint_{B_\th} P_i- P_i(\cdot,y) \d y }_{L^\frac r2_t L^\frac m2_x(Q_\th)}\lec \th^{-3 + 2\si}\Norm{\dint_{B_\th} |P_{ih}- P_{ih}(\cdot,y)| \d y }_{L^\frac r2_t L^\frac m2_x(Q_\th)}+ \th^\be\\
&\quad\lec  \th^{-3 + 2\si}\norm{\norm{\na P_{ih}(t,\cdot)}_{L^\infty(B_\frac12)}\th}_{L^\frac r2_t L^\frac m2_x(Q_\th)}+ \th^\be
\lec \th^{2(1-\si)+\frac 6m}\norm{P_i}_{L^\frac r2_tL^\frac m 2_x(Q_{3/4})}+ \th^\be
\lec \th^\be.
\end{align*}
for sufficiently large $i$, by elliptic estimates. The last inequality follows from $2(1-\si) + \frac 6m >2(1-\si)>\be$. 
Combining it with \eqref{osc.vi}, we have
\begin{align*}
\frac{\norm{v_i-(v_i)_{Q_\th}}_{L^r_tL^m_x(Q_\th)}}{\th^{2-\si}|B_1|^{\frac 1m}} + \frac{\th\norm{q_i-(q_i)_{B_\th}}_{L^\frac r2_tL^\frac m2_x(Q_\th)}}{\th^{2(2-\si)}|B_1|^{\frac 2m}} 
\leq \td C \th^{\be} \e_i
\end{align*}
for sufficiently large $i$ for some $\be<\min(1/2, 1-2/r)$ and $\td C= \td C(r,m)>0$. Finally, if we choose $C> \td C$, it contradicts to \eqref{osc.est}.

\end{proof}

Lastly, we prove Theorem \ref{thm:ep.reg} by iterating Lemma \ref{lem:osc}.

\begin{proof}[Proof of Theorem \ref{thm:ep.reg}.] \quad
WLOG, we can assume $(r,m)$ satisfies the assumptions in Lemma \ref{lem:osc}. 
Otherwise, in the case of $r<\infty$, choose $\si\in (0, \min(1/6, 1/r, 1/m))$ and then $\widetilde m\in (2, m)$, so that $(r,\widetilde m)$ satisfies \eqref{con.rm}. The smallness assumption \eqref{small.e0} with the replacement of $m$ by $\widetilde m$ then follows by H\"older's inequality, adjusting $\e_0(r, m)$ if necessary. In the case of $r=\infty$, one can choose $\widetilde r \in (2,\infty)$ so that $2/{\widetilde r} + 3/m <2$, and \eqref{small.e0} with the replacement of $r$ by $\widetilde r$ is again valid, adjusting $\e_0(r,m)$. Since $\widetilde r \in (2,\infty)$, we can find $\widetilde m$ as in the case of $r<\infty$ and work with $(\widetilde r, \widetilde m)$. Note that $\al<1/2-1/{\widetilde r}< 1/2$.
\\

\noindent\texttt{Step 1.} Iteration of Lemma \ref{lem:osc}. 

For the convenience, for $z_0 = (t_0,x_0)$, we denote
\begin{align*}
U_{r,m}(v,q,\rho,z_0)
&:=\frac{\norm{v-(v)_{Q_\rho(z_0)}}_{L^r_tL^m_x(Q_\rho(z_0))}}{\rho^{2-\si}|B_1|^{\frac 1m}} 
+ \frac{\rho \norm{q-(q)_{B_\rho(x_0)}}_{L^\frac r2_tL^\frac m2_x(Q_\rho(z_0))}}{\rho^{2(2-\si)}|B_1|^\frac2m}\\
W_{r,m}(v,q,h,f,r, z_0)
&:=U_{r,m}(v,q,\rho, z_0) + \rho^{1-\frac2r}\norm{h}_{L^r_tW^{1,\infty}_x(Q_\rho(z_0))} + \rho^{2-\frac4r}\norm{f}_{L^\frac r2_tL^\infty_x(Q_\rho(z_0))}
\end{align*}
and we suppress $z_0$ when $z_0=0$. We first prove the following claim. 
\medskip

\noindent{\bf Claim: }There exist $\th=\th(r,m)\in (0,1/3)$ and $\e_2=\e_2(r,m) \in(0, \min(\e_1,1)]$, where $\e_1=\e_1(\th,r,m)$ is determined by Lemma \ref{lem:osc}, such that if a suitable weak solution $(v,q)$ to \eqref{per.NS} with some divergence-free $h$ and $f$ satisfies $|(v)_{Q_1}| \leq 1/2$ and $W_{r,m}(v,q,h,f,1)\leq \e_2$, then 
\begin{align} \label{ind.hyp}
 |(v)_{Q_{\th^k}}|\leq 1,  \quad
W_{r,m}(v,q,h,f,\th^k)   \leq \th^{k\al }\e_2, \qquad\forall k\in \mathbb{N}\cup \{0\}
\end{align}
for some $\al\in (0,1/2-1/r)$. In particular, 
\begin{align*}
\left(\dint_{Q_{\th^k}} |v-(v)_{Q_{\th^k}}|^2 \d x\d t \right)^\frac12 \leq \th^{k\al} \e_2, \qquad\forall k\in \mathbb{N}\cup \{0\}.
\end{align*} 

\medskip
\begin{proof}[Proof of Claim.]
We prove the claim by the induction argument.
One can easily check that \eqref{ind.hyp} holds true when $k=0$. 

Now, we suppose that \eqref{ind.hyp} holds for any integer $k\in [0,n]$ for some $n\in \mathbb{N}\cup \{0\}$. Define the rescaled pair
\begin{align*}
&v_R(t,x) := v(R^2 t , Rx),  \quad
q_R(t,x) := R q(R^2 t, Rx),\\
&h_R(t,x) := R h(R^2 t, Rx),\quad
f_R(t,x) := R^2 f(R^2 t, Rx)
\end{align*}
for $R>0$. Then, for each $R\in (0,1]$, the rescaled pair $(v_R, q_R, h_R, f_R)$ is a suitable weak solution to \eqref{per.NS1} on $Q_1$ for $\la = R$ and $\div h_R = \div f_R =0$. Set $R=\th^n$ for $\th\in (0,1/3)$. A simple computation gives
\begin{align*}
|(v_{\th^n})_{Q_1}|&= |(v)_{Q_{\th^n}}| \leq 1\\
W_{r,m}(v_{\th^n}, q_{\th^n}, h_{\th^n}, f_{\th^n}, 1)
&\le W_{r,m}(v,q, h, f,\th^n)
\le \th^{n\al} \e_2\le \e_1.
\end{align*}
Therefore, applying Lemma \ref{lem:osc}, we have positive constants $\be\in (0,\min(1/2, 1-2/r))$ and $C=C(r,m)$ such that
\begin{align*}
U_{r,m}(v,q,\th^{n+1})
&=U_{r,m}(v_{\th^n}, q_{\th^n}, \th)
\leq C\th^\be W_{r,m}(v_{\th^n},q_{\th^n},h_{\th^n},f_{\th^n}, 1)\\
&\leq C\th^\be  W_{r,m}(v,q,h,f,\th^n) \leq \frac12\th^{(n+1)\al}\e_2 
\end{align*}
if $\th$ is small enough to satisfy $2C\th^\be \leq \th^{\frac{\be}2}$ and $\al\leq \be/2<\min(1/2-1/r, 1/4)$. 
Since we also have
\begin{align*}
&\th^{(n+1)\left(1-\frac2r\right)}\norm{h}_{L^r_tW^{1,\infty}_x(Q_{\th^{n+1}})} 
+ \th^{2(n+1)\left(1-\frac2r\right)} \norm{f}_{L^\frac r2_tL^\infty_x(Q_{\th^{n+1}})}\\
&\hspace{2cm}\leq \th^{(n+1)\left(1-\frac2r\right)} \left(\norm{h}_{L^r_tW^{1,\infty}_x(Q_1)} + \norm{f}_{ L^\frac r2_tL^\infty_x(Q_1)}\right)
\leq  \th^{(n+1)\al} \th^{\frac12-\frac1r} \e_2
\leq \frac12\th^{(n+1)\al}\e_2
\end{align*}
if $ \th^{\frac12-\frac1r}\leq 1/2$. Therefore, we choose $\th=\th(r,m)\in (0,1/3)$ such that $2C(r,m)\th^\be \leq \th^{\frac{\be}2}$ and $ \th^{\frac12-\frac1r}\leq 1/2$, then
\begin{align*}
W_{r,m}(v,q,h,f,\th^{n+1})   \leq \th^{(n+1)\al }\e_2
\end{align*}
for some $\al\in (0,1)$. 

Lastly, $(v)_{Q_{\th^{n+1}}}$ can be estimated as
\begin{align*}
|(v)_{Q_{\th^{n+1}}}|
&\leq \sum_{k=0}^n |(v)_{Q_{\th^{k+1}}} - (v)_{Q_{\th^k}}| + |(v)_{Q_1}|\\
&\leq \sum_{k=0}^n \left(\dint_{-\th^{2(k+1)}}
^0 \left(\dint_{B_{\th^{k+1}}} |v-(v)_{Q_{\th^k}}|^m \d x\right)^{\frac rm} \d t\right)^\frac1r + \frac{1}2\\
&\leq \th^{-(2-\si)} \sum_{k=0}^n W_{r,m}(v,q,h,f,\th^k) +\frac{1}2
\leq \th^{-(2-\si)}\sum_{k=0}^n \th^{k\al} \e_2 +\frac{1}2\\
&\leq \frac{\th^{-(2-\si)}}{1-\th^\al}\e_2 + \frac 12 \leq 1
\end{align*}
for the choice of sufficiently small $\e_2\in (0,\min(\e_1,1)]$.
This completes the proof of the claim.
\end{proof}

\medskip 
 
\noindent\texttt{Step 2.} H\"older regularity of $v$. 

By the translation and scale invariances of \eqref{per.NS}, a transformed solution
\begin{align*}
\td v (t,x )  = 2^{-1} v(2^{-2}t + t_0, 2^{-1} x + x_0)\\
\td q (t,x )  = 2^{-2} q(2^{-2}t + t_0, 2^{-1} x + x_0)\\
\td h (t,x )  = 2^{-1} h(2^{-2}t + t_0, 2^{-1} x + x_0)\\
\td f (t,x )  = 2^{-3} f(2^{-2}t + t_0, 2^{-1} x + x_0)
\end{align*} 
for $z_0:=(t_0,x_0)\in Q_{1/2}$ is also a suitable weak solution to \eqref{per.NS} in $Q_1$. Now, applying the claim to the transformed solution, we obtain that if $ |(v)_{Q_{1/2}(z_0)}| \leq 1$ and
\begin{align*}
\frac 12U_{r,m}(v,q, 2^{-1}, z_0) + \norm{h}_{L^r_tL^\infty_x(Q_{1/2}(z_0))}
+\norm{f}_{L^\frac r2_tL^\infty_x(Q_{1/2}(z_0))}
\le \e_2,
\end{align*}
then
\begin{align}\label{pre.holder}
\left( 
\dint_{\frac12 Q_{\th^k}(z_0)} |v-(v)_{\frac12Q_{\th^k}(z_0)}|^2 \d x \d t
\right)^\frac12 \leq 2\th^{k\al} \e_2, \qquad \forall k\in \mathbb{N}\cup\{0\}
\end{align}
where $\th$, $\e_2$, $\al$ are chosen as in the claim. Since we have
\begin{align*}
\sup_{z_0\in Q_{1/2}}|(v)_{Q_{1/2}(z_0)}|
\leq C_1 \norm{v}_{L^r_tL^m_x(Q_1)} \leq C_1\e_0
\end{align*}
and 
\begin{align*}
\sup_{z_0\in Q_{1/2}}&\left(\frac 12U_{r,m}(v,z, 2^{-1}, z_0) + \norm{h}_{L^r_tL^\infty_x(Q_{\frac12}(z_0))}
+\norm{f}_{L^\frac r2_tL^\infty_x(Q_{\frac12}(z_0))}\right)\\
&\leq C_1\left( \norm{v}_{L^r_tL^m_x(Q_1)} + \norm{q}_{L^\frac r2_tL^\frac m2_x(Q_1)}+
\norm{h}_{L^r_tL^\infty_x(Q_1)}
+\norm{f}_{L^\frac r2_tL^\infty_x(Q_1)}\right) \leq C_1\e_0
\end{align*}
for some universal constant $C_1$, we choose a positive constant $\e_0$ such that $C_1\e_0 \leq \min(\e_2,1)$ and apply Campanato's lemma to \eqref{pre.holder} to get
\begin{align*}
\norm{v}_{C^\al_{\text{par}}(Q_{1/2})} \leq C
\end{align*}
for some universal positive constant $C$. \end{proof}

\section{Proof of Theorem \ref{main.thm}}\label{sec:proof}


\begin{proof}[Proof of Theorem \ref{main.thm}]

Applying Lemma \ref{lem:decomp}, we first have a decomposition of a dissipative solution on $Q_1$ as
\begin{align*}
u= v+h
\end{align*}
where $h$ is a harmonic function and divergence-free on $Q_1$ satisfying 
\begin{align*}
\norm{\na^k h}_{L^\infty((-t_0,0)\times B_1)}&\lec_k \norm{u}_{L^\infty(-t_0,0;L^1(B_2))}\\
\norm{h}_{L^r_tW^{1,\infty}_x(Q_1)} 
&\lec \norm{u}_{L^r_tL^m_x(Q_2)}
\end{align*}
for any $t_0\in (0,1]$.
For such $h$, $v$ is a suitable weak solution to \eqref{per.NS} in $Q_1$ with some pressure $q$, and some divergence-free external force $f$ satisfying
\begin{align*}
\norm{q}_{L^\frac r2_tL^\frac m2_x(Q_1)}
&\lec_m \norm{u}_{L^r_tL^m_x(Q_2)}^2\\
\norm{f}_{L^\frac r2_tL^\infty_x(Q_1)}
&\lec \norm{u}_{L^r_tL^m_x(Q_2)} + \norm{u}_{L^r_tL^m_x(Q_2)}^2. 
\end{align*}  
Therefore, using \eqref{est.v} we have
\begin{align*}
\norm{v}_{L^r_tL^m_x(Q_1)}
+\norm{q}_{L^\frac r2_tL^\frac m2_x(Q_1)}
+\norm{h}_{L^r_tW^{1,\infty}_x(Q_1)}
+\norm{f}_{L^\frac r2_tL^\infty_x(Q_1)}
\leq C_2 (\norm{u}_{L^r_tL^m_x(Q_2)}+\norm{u}_{L^r_tL^m_x(Q_2)}^2) 
\end{align*}
for some positive constant $C_2=C_2(m)$, and hence if we choose a positive constant $\e=\e(r,m)$ such that $C_2(m)(\e + \e^2)\leq \e_0(r,m)$, where $\e_0(r,m)$ is given by Theorem \ref{thm:ep.reg}, we have
\begin{align*}
\norm{v}_{C^{\al}_{\text{par}}(Q_{1/2})}\leq C 
\end{align*}
for some $\al\in (0,1/2-1/r)$ and some universal positive constant $C$.

\end{proof}

\section{Application: Short time local regularity of the Navier-Stokes equations} \label{sec:stlr}
In this section, we give a proof of Theorem \ref{thm:app}. To this end, we first introduce a Gr\"onwall-type inequality in \cite[Lemma 2.2]{BrTs20}.
\begin{lem}\label{Gronwall} Suppose $g\in L^\infty_\loc([0,T_0);[0,\infty))$ satisfies 
\begin{align*}
g(t) \leq a + b\int_0^t g(s) + g^m(s) \d s, \quad \forall t\in  (0,T_0)
\end{align*}
for some $a,b>0$ and $m\geq 1$. Then, we have $g(t)\leq 2a$ for $t\in (0,T)$ with 
\begin{align*}
T=  \min\left(T_0,\, \frac{\bar C}{b(1+a^{m-1})}\right) 
\end{align*}
where $\bar C$ is a universal constant.
\end{lem}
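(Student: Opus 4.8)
The plan is to run a standard continuity (bootstrap) argument, but carried out on the absolutely continuous majorant of $g$ rather than on $g$ itself, since $g$ is only assumed to lie in $L^\infty_\loc$. Set
\[
G(t) := a + b\int_0^t g(s) + g^m(s)\,\d s .
\]
Because $g\in L^\infty_\loc([0,T_0);[0,\infty))$, both $g$ and $g^m$ are integrable on every compact subinterval of $[0,T_0)$, so $G$ is finite, nonnegative, nondecreasing and locally Lipschitz (in particular continuous) on $[0,T_0)$, with $G(0)=a$. The hypothesis reads exactly $g(t)\le G(t)$ for every $t\in(0,T_0)$, so it suffices to prove $G(t)\le 2a$ on $(0,T)$.

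To this end I would set $T_* := \sup\{\tau\in(0,T_0): G(t)\le 2a \text{ for all } t\in[0,\tau]\}$. This supremum is taken over a nonempty set since $G(0)=a<2a$ and $G$ is continuous, and by continuity $G\le 2a$ in fact holds on the whole closed interval $[0,T_*]$ (when $T_*<T_0$). Suppose, for contradiction, that $T_*<T\le T_0$. On $[0,T_*]$ we have $g(t)\le G(t)\le 2a$, hence $g(t)+g^m(t)\le 2a+(2a)^m$, and inserting this into the definition of $G$ gives, for every $t\in[0,T_*]$,
\[
G(t) \le a + b\,\bigl(2a+(2a)^m\bigr)\,T_* .
\]
Since $2a+(2a)^m$ is bounded by a fixed multiple of $a\,(1+a^{m-1})$ for $m\ge1$, choosing $\bar C$ small enough that $T_*<T\le \bar C\,\bigl[b(1+a^{m-1})\bigr]^{-1}$ forces the right-hand side to be \emph{strictly} less than $2a$. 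Thus $G(T_*)<2a$, and by continuity $G$ stays below $2a$ on a slightly larger interval $[0,T_*+\varepsilon]\subset[0,T_0)$, contradicting the maximality of $T_*$. Hence $T_*\ge T$, and therefore $g(t)\le G(t)\le 2a$ for all $t\in(0,T)$, as claimed.

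The argument has no real obstacle. The two points that need care are: (i) working with the continuous majorant $G$ in place of $g$, which legitimises the open--closed dichotomy behind the continuity argument even though $g$ has no pointwise regularity; and (ii) extracting the strict inequality $G(T_*)<2a$ (rather than merely $\le 2a$), which is exactly what reopens the interval and fixes the admissible size of $\bar C$ in the definition of $T$. The only quantitative input is the elementary observation that $2a+(2a)^m$ is bounded by a fixed multiple of $a(1+a^{m-1})$ for $m\ge1$.
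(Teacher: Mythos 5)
Your argument is sound and is essentially the standard proof of this kind of statement; note that the paper does not prove the lemma at all but simply cites \cite[Lemma 2.2]{BrTs20}, so a self-contained continuity argument on the absolutely continuous majorant $G$ is exactly the right thing to supply, and the two points you isolate (working with $G$ rather than $g$, and extracting a strict inequality at $T_*$ to reopen the interval) are indeed the only places where care is needed.

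The one quantitative claim you should fix is the sentence ``$2a+(2a)^m$ is bounded by a fixed multiple of $a(1+a^{m-1})$ for $m\ge 1$.'' As stated this is false if ``fixed'' means independent of $m$: at $a=1$ it would require $2+2^m\le 2C$. What is true is $2a+(2a)^m\le 2^m a\,(1+a^{m-1})$, so your argument yields the conclusion with $\bar C\simeq 2^{-m}$, i.e.\ a constant depending on $m$. This dependence is not an artifact of your method: taking $g$ to solve $g'=b(g+g^m)$, $g(0)=a$ with $a\ge 1$, the time for $g$ to reach $2a$ is at most $\int_a^{2a}\frac{dx}{b\,x^m}\le \frac{a^{1-m}}{b(m-1)}$, which is $O\!\bigl(\tfrac{1}{m-1}\bigr)\cdot\frac{1}{b(1+a^{m-1})}$, so no $m$-independent $\bar C$ can work in the statement as written. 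Thus ``universal'' in the lemma must be read as ``depending only on $m$'' (harmless here, since the paper applies it with $m=3$), and your proof is correct once the multiple $2^m$ and the resulting $m$-dependence of $\bar C$ are made explicit.
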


The proof of Theorem \ref{thm:app} is relying on the scheme in \cite{KMTs20}. 
\begin{proof}[Proof of Theorem \ref{thm:app}.]
First, we decompose the dissipative solution on $(0,T_0)\times B_1$ into the principle part $v$ and the harmonic part $h$ as in Lemma \ref{lem:decomp}---
the lemma is still valid with the replacement of 
the time interval $(-4,0)$ by $(0,T_0)$. Since $u\in L^\infty(0,T_0;L^2(B_2))$  and $h$ satisfies 
\begin{align*}
\norm{h}_{L^\infty(0,T_0;C^k(B_1))} \lec_k \norm{u}_{L^\infty(0,T_0;L^2(B_2))}
\end{align*}
by \eqref{est.h}, it is enough to show that $v$ is regular on $(0,T)\times B_{1/2}$. Also, we note that $v$ is a suitable weak solution to \eqref{per.NS} on $(0,T_0)\times B_1$ with initial data $v_0 := -\curl\De^{-1}(\ph \curl u_0)$, where $\ph$ is the spatial cut-off defined as in Lemma \ref{lem:decomp}. Indeed, using \eqref{est.v}, one can easily see that
\begin{align}\label{est.v0}
\norm{v_0}_{L^3(B_1)}
\lec \norm{u_0}_{L^3(B_2)} \lec  \e, \quad
\lim_{t\to0^+} \norm{v(t,\cdot)- v_0}_{L^2(B_1)} 
\lec \lim_{t\to0^+} \norm{u(t,\cdot)- u_0}_{L^2(B_1)}  =0.
\end{align}
By Theorem \ref{thm:ep.reg}, it then follows that for any $Q_r(t_0,x_0)$ contained in the domain $(0,T_0)\times B_1$, if we have
\begin{equation}\begin{split}
&r^{-\frac23}\norm{v}_{L^3(Q_r(t_0,x_0))}
+ r^{-\frac43}\norm{q}_{L^{\frac 32}(Q_r(t_0,x_0))} \\
&\hspace{3cm}+ r^\frac13\norm{h}_{L^3_tW^{1,\infty}_x(Q_r(t_0,x_0))}
+ r^{\frac53}\norm{f}_{L^\frac 32_tL^\infty_x(Q_r(t_0,x_0))} \le \e_0,
\end{split}\end{equation}
then $\norm{v}_{L^\infty(Q_{r/2}(t_0,x_0))}\leq C/r$, where $\e_0:=\e_0(3,3)$ and $C$ are universal constants defined as in the theorem. Since we have
\begin{equation}\label{est.hf}\begin{split}
&r^\frac13\norm{h}_{L^3_tW^{1,\infty}_x(Q_r(t_0,x_0))}
+ r^{\frac53}\norm{f}_{L^\frac 32_tL^\infty_x(Q_r(t_0,x_0))}\\
&\quad\leq r^\frac13\norm{h}_{L^3_tW^{1,\infty}_x((0,T_0)\times B_1)}
+ r^{\frac53}\norm{f}_{L^\frac 32_tL^\infty_x((0,T_0)\times B_1)}\\
&\quad\lec  C_0 r^\frac13\norm{u}_{L^3((0,T_0)\times B_2)}
+ C_0 r^{\frac53}(\norm{u}_{L^3((0,T_0)\times B_2)} + \norm{u}_{L^3((0,T_0)\times B_2)}^2)\\
&\quad \leq C_0 r^\frac13 M
+ C_0 r^{\frac53}(M + M^2),
\end{split}\end{equation}
where  $C_0$ is a universal constant obtained from \eqref{est.f} and \eqref{est.h}, we can get the desired smallness of $h$ and $f$ for sufficiently small $r$. Therefore, we aim to prove 
\begin{align}\label{smallness.vq}
\frac 1{r^2} \iint_{Q_r(t_0,x_0)} |v|^3 \d x \d t \leq \left(\frac{\e_0}{4}\right)^3, \quad
\frac 1{r^2} \iint_{Q_r(t_0,x_0)} |q|^\frac32 \d x \d t
\le \left(\frac{\e_0}{4}\right)^\frac32
\end{align}
for any $Q_r(t_0,x_0)\subset (0,T_0)\times B_1$ with $(t_0,x_0)\in (0,T)\times B_{1/2}$. 

To this end, we test the local energy inequality for $v$ (obtained from \eqref{LEI.v} with $\la=1$ with help of the $L^2$-continuity at $t=0$ in \eqref{est.v0}) with a spatial cut-off $\ph\in C_c^\infty(B_{2r};[0,1])$ satisfying $\ph =1$ on $B_r$ for $B_{2r}\subset B_1$, and divide it by $1/r$ to get
\begin{equation}\begin{split} \label{LEI.app}
&\frac 1{2r} \int_{B_r} |v(t)|^2 \,\d x 
+ \frac 1r \int_0^t \int_{B_r } |\na v|^2 \,\d x \d s\\
&\leq \frac 1{2r} \int_{B_{2r}} |v_0|^2 \,\d x 
+ \frac{C_1}{r^3} \int_0^t \int_{B_{2r}} |v|^2  \,\d x \d s
+ \frac{C_1}{r^2} \int_0^t \int_{B_{2r}} |v|^3 + |h|^3 + |q|^\frac32  \,\d x \d s\\
&\quad+C_1r\int_0^t \int_{B_{2r}} |\na h|^3  \,\d x \d s + \frac{C_1}{r^\frac12} \int_0^t\int_{B_{2r}} |f|^\frac32  \,\d x \d s,
\end{split}\end{equation}
for any $t\in (0,T_0)$ for some universal constant $C_1$, where we used the Young's inequalities. Set 
\begin{align*}
E_{r}(t)
:= \frac 1{2r} \int_{B_r} |v(t)|^2 \,\d x 
&+ \frac 1r \int_0^t \int_{B_r } |\na v|^2 \,\d x \d s + \frac 1{r^2} \int_0^t \int_{B_r} |q|^\frac32 \,\d x \d s, \\
&\mathcal{E}_{r_1,r_2}(t)
:=\sup_{r_1\le r\le r_2} E_r(t)
\end{align*} 
and we will get the Gronwall-type inequality in Lemma \ref{Gronwall} for $\mathcal{E}_{r,\rho_0}(t)$ for some $\rho_0$. 

To estimate the right hand side of \eqref{LEI.app}, we let $\rho := c_0 r> 2r$. 
The constant $c_0>2$ will be determined below. First, we decompose $q$ into $q_o$ and $q_h$, where
$q_o = (-\De)^{-1} \div\div((v\otimes v + v\otimes h + h\otimes v)\chi_{\rho} ) $
for the indicator function $\chi_{\rho}$ of the set $B_{\rho}\subset B_1$. By the definition and the $L^p$-norm preservation of Riesz transforms, we get
\begin{align*}
\int_0^t\int_{B_\rho} |q_o|^\frac32  \, \d x \d s
\lec \int_0^t \int_{B_\rho} |v|^3 + |h|^3 \, \d x \d s.
\end{align*}
Then, since $q_h:= q-q_o$ is harmonic on $(0,T_0)\times B_\rho$, using elliptic estimates we have
\begin{align}\label{est.qh32}
\int_0^t \int_{B_{2r}} |q_h|^\frac 32 \, \d x \d s
&\lec \frac{r^3}{\rho^3} \int_0^t \int_{B_{\rho}} |q_h|^\frac32 \, \d x \d s
\lec \int_0^t \int_{B_{\rho}} |q_o|^\frac32 \, \d x \d s
+\frac{r^3}{\rho^3} \int_0^t \int_{B_{\rho}} |q|^\frac32 \, \d x \d s.
\end{align}
Combining two estimates, we have
\begin{align}\label{est.q32}
\frac {C_1}{r^2} \int_0^t \int_{B_{2r}} |q|^\frac32 \, \d x \d s
\leq \frac {C_2}{r^2}  \int_0^t \int_{B_\rho} |v|^3 + |h|^3 \, \d x \d s  + \frac{ C_2 r}{\rho}  E_{\rho}(t)
\end{align}
for some universal constant $C_2$, and choose $c_0 = 8C_2$ to have $(C_2r)/\rho = 1/8$. (If necessary, we adjust $C_2$ in \eqref{est.q32} to have $c_0>2$.) On the other hand, by the standard interpolations, we have
\begin{align}
\label{est.v3}
&\frac{(C_1+C_2)}{r^2} \int_0^t \int_{B_{\rho}} |v|^3\,\d x \d s \nonumber\\
&\hspace{2cm}\lec \frac{1}{r^2} \int_0^t \norm{v(s,\cdot)}_{L^2(B_{\rho})}^\frac32 \norm{\na v(s,\cdot)}_{L^2(B_{\rho})}^\frac32+r^{-\frac32}\norm{ v(s,\cdot)}_{L^2(B_{\rho})}^3 \,\d s\\
&\hspace{2cm}\leq  \frac 1{16\rho} \int_0^t\int_{B_\rho} |\na v|^2 \,\d x \d s 
+ c \int_0^t \frac {\rho^3}{r^8}\norm{v(s,\cdot)}_{L^2(B_{\rho})}^6+\frac {1}{r^{\frac72}} \norm{ v(s,\cdot)}_{L^2(B_{\rho})}^3 \d s \nonumber\\
&\hspace{2cm}\leq \frac 1{16} E_{\rho}(t)
+ \frac{C_3}{r^2} \int_0^t E_{\rho}(s)^3 + E_{\rho}(s)^\frac32 \,\d s.\nonumber
\end{align}
for some universal constants $c$ and $C_3$.
Using this, \eqref{est.v0}, \eqref{est.q32}, and \eqref{est.hf}, and taking supremum in $[r,r_0]$ to \eqref{LEI.app}, we first obtain
\begin{equation}\label{est.mathcalE1}\begin{split}
\mathcal{E}_{r,r_0}(t) 
&\leq C_4\e^2 
+ \frac {C_4}{r^2} \int_0^t  \mathcal{E}_{r,\rho_0}(s)+ \mathcal{E}_{r,\rho_0}(s)^3  \d s 
+\frac 14 \mathcal{E}_{r,\rho_0}(t) + C_4 \rho_0 M^3
+ C_4 \rho_0^{\frac52} (M^\frac32+ M^3)
\end{split}\end{equation}
for some universal constant $C_4$, where $\rho_0 := c_0 r_0$ and $r_0$ will be chosen below sufficiently small so as to $\rho_0<1/8$. 

We now estimate the remaining piece $\mathcal{E}_{r_0, \rho_0}(t)$. By \eqref{LEI.app}, we have
\begin{equation}\begin{split}\label{est.Er2}
E_r(t) 
&\lec \norm{v_0}_{L^3(B_{3/4})}^2
+ \frac {t}{r^3} \norm{v}_{L^\infty(0,t;L^2(B_{3/4}))}^2 
+ \frac {t^\frac 14}{r^2} \norm{v}_{L^4(0,t;L^3(B_{3/4}))}^3
+ \frac{1}{r^2} \norm{q}_{L^\frac32((0,t)\times B_{2r})}^\frac32\\
&\quad + r\norm{h}_{L^3(0,t;W^{1,\infty}(B_1))}^3 
+ r^\frac52 \norm{f}_{L^\frac32(0,t;L^\infty(B_1))}^\frac32
\end{split}\end{equation}
for $r\in [r_0, \rho_0]$.  
To estimate the energy norm of $v$ on $(0,T_0)\times B_{3/4}$, we again test the local energy inequality for $v$ with a spatial smooth cut-off $\ac\ph\in C_c^\infty(B_1;[0,1])$ with $\ac\ph=1$ on $B_{3/4}$ and use H\"older and Young's inequalities, \eqref{est.v}, and \eqref{est.q}-\eqref{est.h} to get
\begin{align}
&\norm{v}_{L^\infty(0,T_0;L^2(B_{3/4}))}^2
+\norm{\na v}_{L^2((0,T_0)\times B_{3/4})}^2\nonumber\\
&\quad\lec \norm{v_0}_{L^2(B_1)}^2+\norm{v}_{L^2((0,T_0)\times B_1)}^2
+\norm{v}_{L^3((0,T_0)\times B_1)}^3 \nonumber\\
&\qquad+\norm{h}_{L^3(0,T_0;W^{1,3}( B_1))}^3
+\norm{q}_{L^\frac32((0,T_0)\times B_1)}^\frac32
+\norm{f}_{L^\frac32((0,T_0)\times B_1)}^\frac32 \nonumber\\
&\quad\lec \e^2+ \norm{u}_{L^3((0,T_0)\times B_2)}^\frac32
+ \norm{u}_{L^3((0,T_0)\times B_2)}^3
\lec\e^2 + M^\frac32 + M^3\label{est.energy.v}.
\end{align}
Then, setting $\hat{q}_o:= (-\De)^{-1} \div\div ((v\otimes v+ v\otimes h+ h\otimes v)\chi_{3/4})$ and $\hat{q}_h := q-\hat{q}_o$, we estimate $q$ as before; for any $r\in [r_0, \rho_0]$ and $t\in [0,T_0]$,
\begin{align*}
\norm{q}_{L^\frac32((0,t)\times B_{2r})}^\frac32
&\lec
\norm{\hat {q}_o}_{L^\frac32((0,t)\times \R^3)}^\frac32
+\norm{\hat{q}_h}_{L^\frac32((0,t)\times B_{2r})}^\frac32
\lec
\norm{\hat {q}_o}_{L^\frac32((0,t)\times \R^3)}^\frac32
+r^3\norm{\hat{q}_h}_{L^\frac32((0,t)\times B_{3/4})}^\frac32 \\
&\lec \norm{\hat{q}_o}_{L^\frac32((0,t)\times \R^3)}^\frac32
+ r^3 \norm{q}_{L^\frac32((0,t)\times B_{3/4})}^\frac32\\
&\lec t^\frac 14\norm{v}_{L^4(0,t;L^3( B_{3/4}))}^3
+t^\frac12\norm{v}_{L^\infty(0,t;L^2( B_{3/4}))}^\frac32\norm{h}_{L^3(0,t;L^\infty(B_{3/4}))}^\frac32
+ r^3 \norm{q}_{L^\frac32((0,t)\times B_{3/4})}^\frac32\\
&\lec t^\frac 14(\e^3 + M^\frac94+ M^\frac92)
+ t^\frac 34\norm{h}_{L^3(0,t;L^\infty(B_{3/4}))}^3
+ r^3 \norm{q}_{L^\frac32((0,t)\times B_{3/4})}^\frac32,
\end{align*}
where 
the last line follows from the Young's inequality and \eqref{est.energy.v}. Using these, \eqref{est.hf}, and \eqref{est.q}, and taking supremum in $[r_0,\rho_0]$ to \eqref{est.Er2}, we get
\begin{align*}
\mathcal{E}_{r_0, \rho_0}(t)
&\lec \e^2
+ \frac {t} {r_0^3} (\e^2+ M^\frac32+ M^3)
+ \frac{t^\frac14}{r_0^2}(\e^3+ M^\frac94+ M^\frac92)
+  \frac{t^\frac 34}{r_0^2} M^3\\
&\quad+ \rho_0 M^3 +  \rho_0^\frac52 (M^\frac 32+M^3).
\end{align*}

Combining it with \eqref{est.mathcalE1}, we finally have
\begin{equation}\begin{split}
\mathcal{E}_{r, \rho_0}(t)
&\lec  \frac {1}{r^2} \int_0^t  \mathcal{E}_{r,\rho_0}(s)+ \mathcal{E}_{r,\rho_0}(s)^3  \d s + \e^2 + \frac {\bar T} {r_0^3} (\e^2+M^\frac32+ M^3) \\
 &\quad
+ \frac{ \bar T^\frac14}{r_0^2}(\e^3+ M^\frac94+ M^\frac92)
+  \frac{ \bar T^\frac 34}{r_0^2} M^3
+\rho_0 M^3 +  \rho_0^\frac52 (M^\frac 32+M^3)\\
&\leq \frac {C_5}{r^2} \int_0^t  \mathcal{E}_{r,\rho_0}(s)+ \mathcal{E}_{r,\rho_0}(s)^3  \d s + C_5(\e^2+\de^\frac14).
\end{split}\end{equation}
for $t\in  [0,\bar T]$, $\bar T\in (0,T_0]$, and $r\leq r_0$ and for some universal constant $C_5\geq 1$. The last line follows from the choice of $\rho_0 =\de/(1+M^3)$ (and hence $r_0 = \de/(c_0(1+M^3))$) and $\bar T=\min(T_0,{\de^9}/{(1+M^{42})})$ and the assumption $\e\in(0,1)$.  
Here, $\de\in(0,1)$ will be chosen below as a universal constant. Lastly, we use Lemma \ref{Gronwall} to have
\begin{align*}
\mathcal{E}_{r, \rho_0}(t)
\leq 2C_5 (\de^\frac{1}4 + \e^2), \quad \forall t\in (0,\min(\bar T, c_1r^2)),\,  r\in (0, r_0]
\end{align*}
where $c_1=\bar C C_5^{-3}(1+\e^2+ \de^\frac14)^{-2}$ for the universal constant $\bar C$ in Lemma \ref{Gronwall}.
(If necessary, we adjust $C_5$ to have $c_1\le 1$.) 
It then implies that similar to \eqref{est.v3}, 
\begin{align*}
\frac 1{(\sqrt{c_1}r)^2} \int_{Q_{r\sqrt{c_1}}(c_1r^2, x_0)} |v|^3 \d x\d s 
&\leq 
\frac 1{(\sqrt{c_1}r)^2} \int_0^{c_1r^2}\int_{B_r(x_0)} |v|^3 \d x\d s\\
&\lec
\frac 1{(\sqrt{c_1}r)^2} \int_0^{c_1r^2}
\norm{v}_{L^2(B_r(x_0))}^\frac32\norm{\na v}_{L^2(B_r(x_0))}^\frac32 + r^{-\frac32}\norm{v}_{L^2(B_r(x_0))}^3 \d s\\
&\lec 
(c_1^{-\frac34} + 1) \sup_{t\in [0,c_1r^2]} E_r^\frac32(t)
\lec (\de^\frac18 + \e)^3
\end{align*}
and 
\begin{align*}
\frac 1{(\sqrt{c_1}r)^2}\int_{Q_{r\sqrt{c_1}}(c_1r^2, x_0)} |q|^\frac32 \d x\d s 
\lec c_1^{-1} E_r(c_1r^2) \lec (\de^\frac18 + \e)^2.
\end{align*}
for any $x_0\in B_{1/2}$, provided that $r\leq \min(r_0, \sqrt{c_1^{-1}\bar T})$. Here, we used $c_1^{-1}\lec (1+\e^2+\de^\frac14)^2\lec 1$ and the generalization to $x_0\in B_{1/2}$ easily follows by repeating the same argument.  

Finally, we choose $\de,\e \le(0,1)$ such that \eqref{smallness.vq} holds on $Q_{r\sqrt{c_1}}(c_1r^2, x_0)$, $\varrho_0\leq \de <1/8$, and 
\begin{align*}
&(\sqrt{c_1}r)^{\frac13}\norm{h}_{L^3_tW^{1,\infty}_x(Q_{r\sqrt{c_1}}(c_1r^2, x_0))} + (\sqrt{c_1}r)^{\frac53}\norm{f}_{L^\frac32_tL^\infty_x(Q_{r\sqrt{c_1}}(c_1r^2, x_0))}\\
&\hspace{3cm}\leq C_0(\sqrt{c_1}r_0)^{\frac13} M + C_0 (\sqrt{c_1}r_0)^{\frac53}(M+M^2)
\lec \de^\frac13 \leq \frac {\e_0}2
\end{align*}
for $r\leq r_0$. Then, for almost every $(t_0,x_0)\in (0, \min(c_1r_0^2, \bar T))\times B_{1/2}$, we have
\begin{align*}
|v(t_0,x_0)|\leq \frac{ C}{\sqrt{t_0}}. 
\end{align*}
We complete the proof setting $T :=\min(T_0, C_*(1+M^{42})^{-1}) \leq \min(c_1r_0^2, \bar T)$ for some universal constant $C^*>0$. 

\end{proof}

\section{Local suitable weak solutions and dissipative solutions}\label{sec:appenB}
In this section, we prove Theorem \ref{lem:equi}. We first remark that since $u$ is divergence-free, $p_{h, B_R}$ is harmonic and satisfies
\begin{align*}
\norm{\na p_{h, B_R}(t,\cdot)}_{L^2(B_R)} 
\lec \norm{u(t,\cdot)}_{L^2(B_R)}, \quad \forall \text{a.e. }t\in (a,b).
\end{align*}

\begin{proof}[Proof of Theorem \ref{lem:equi}.] 
For the convenience, we drop $B_R$ in the index of $p_{h,B_R}$ and $p_{o,B_R}$. Let $u$ be a local suitable weak solution. By \cite[Lemma 2.4]{Wolf15}, any weak solutions to the Navier-Stokes equations solves
\begin{align}\label{eqn.u0}
\pa_t u  + \div (u\otimes u ) + \na (p_o + \pa_t p_h) = \De u, \quad\text{ in }\mathcal{D'},
\end{align}
which is equivalent to, setting $v:= u+\na p_h$,  
\begin{align}\label{eqn.v}
\pa_t v  + \div (v\otimes u ) - \div( \na p_h\otimes u) + \na p_o = \De v
\quad\text{ in }\mathcal{D'}.
\end{align}
We now show that $(u,p)$ for $p=p_o + \pa_t p_h$ is a dissipative solution. Similar to \texttt{Step 2} in the proof of Lemma \ref{lem:decomp}, we fix a non-negative test function $\xi \in C_c^\infty((0,T)\times B_R)$ and test \eqref{eqn.u0} and \eqref{eqn.v} with $u_{\al,\ep}\xi \ast \psi_{\al,\ep}$ and $v_{\al,\ep}\xi \ast \psi_{\al,\ep}$ to get
\begin{align*}
&\left((\pa_t-\De)  \frac{|u_{\al,\ep}|^2}2+|\na u_{\al,\ep}|^2
+ u_{\al,\ep}\cdot (\div (u\otimes u)\ast\psi_{\al,\ep}) + \div((p\ast \psi_{\al,\ep}) u_{\al,\ep})\right)(\xi)  =0\\
&\left((\pa_t-\De)  \frac{|v_{\al,\ep}|^2}2+|\na v_{\al,\ep}|^2
+ v_{\al,\ep}\cdot (\div (v \otimes u)\ast\psi_{\al,\ep})\right)(\xi)\\
&\hspace{3.7cm}+\left(- v_{\al,\ep}\cdot (\div (\na p_h \otimes u)\ast\psi_{\al,\ep}) + \div((p_o\ast \psi_{\al,\ep}) v_{\al,\ep})\right)(\xi)  =0,
\end{align*}
and send $\al\to 0$ and $\ep\to 0$ to get
\begin{align*}
&\left((\pa_t-\De)  \frac{|u|^2}2+|\na u|^2
+ \div \left(\frac{|u|^2u}2\right) + \lim_{\ep\to 0} \lim_{\al\to 0}\div((p\ast \psi_{\al,\ep}) u_{\al,\ep})\right)(\xi)  = -\lim_{\ep\to 0} \mu_\ep (\xi)\\
&\left((\pa_t-\De)  \frac{|v|^2}2+|\na v|^2
+ \div \left(\frac{|v|^2u}2\right)- v\cdot \div (\na p_h \otimes u) + \div(p_o v)\right)(\xi)  = - \lim_{\ep \to 0} \td \eta_\ep (\xi).
\end{align*}
Here, $\psi_{\al,\ep}$ is defined as in Definition \ref{def:diss} for sufficiently small $\al$ and $\ep$ and $u_{\al,\ep}, v_{\al,\ep}, u_\ep, v_\ep$ are defined as in the proof of Lemma \ref{lem:decomp}. Also, we write $\mu_\ep := u_{\ep}\cdot (\div (u\otimes u)\ast\ga_{\ep}) - \div (|u|^2 u/2)$, $\td \eta_\ep := v_{\ep}\cdot (\div (v \otimes u)\ast\ga_{\ep}) - \div (|v|^2u/2)$. 
Note that we have $\underset{\ep\to 0}{\lim} (\mu_\ep - \td \eta_\ep + T_\ep(u,v,v) - T_\ep(u,u,u)) =0$, where
\begin{align*}
T_\ep(U,V,W)
: =& -\frac12 \int \tau_y[U](t,x)\cdot\na\ga_\ep(y) (\tau_y[V](t,x)\cdot \tau_y[W](t,x)) \d y \\
&+ \int \tau_y[U](t,x) \cdot\na \ga_\ep(y) (\tau_y[V](t,x)\cdot (W\ast \ga_\ep-W)(t,x) \d y
\end{align*}
and $\tau_z(Y)(t,x):=Y(t,x-z)-Y(t,x)$ (see \cite[Lemma 3.7]{CLRM18} for the justification), Then, we have $\underset{\ep\to 0}{\lim} (\mu_\ep - \td\eta_\ep)(\xi) =\underset{\ep\to 0}{\lim}(T_\ep(u,u,u) -T_\ep(u,v,v))(\xi)  =0$ in $ \mathcal{D}'$ because of \cite[Lemma 3.8]{CLRM18} and {$\na p_h \in L^\infty_t\text{Lip}_x(\supp(\xi))$.} As a result,  $\langle \div(pv)\rangle(\xi)$ is well-defined independent of the mollification. Since $u$ is a local suitable weak solution, $\underset{\ep\to 0}{\lim}\,\td \eta_\ep(\xi)$ exists and is non-negative, and so does $\underset{\ep\to 0}{\lim}\,\mu_\ep(\xi)$.  Therefore, $(u,p)$ is a dissipative solution.

On the other hand, if $(u,p)$ is a dissipative solution, again by \cite[Lemma 2.4]{Wolf15}, we have the representation $\na p = \na(p_o + \pa_t p_h)$ in $\mathcal{D}'$ and $(u,p)$ satisfies \eqref{eqn.u0} and \eqref{eqn.v}. Since we now have $\underset{\ep\to 0}{\lim} \underset{\al\to 0}{\lim}\div((p\ast \psi_{\al,\ep}) u_{\al,\ep})(\xi)=\langle \div(pv)\rangle(\xi)$ and $\underset{\ep\to 0}{\lim}\,\mu_\ep(\xi)$ is well-defined and non-negative, it follows that $\underset{\ep\to 0}{\lim}\,\td \eta_\ep(\xi)$ exists and non-negative. Therefore, $u$ is a local suitable weak solution. 
\end{proof}

\appendix
\section{Heat semigroup estimates}
In this section, we recall several heat semigroup estimates. For the proof, see \cite[Appendix D]{JJW16} and \cite[Proposition 11]{BP20}. 
\begin{lem}\label{lem:semigp}		
For any $1\le p<q<\infty$ with $\frac1q +\frac{1}{5} = \frac1p$, we have
\begin{align*}
\Norm{\int_{-1}^t e^{(t-\tau)\De}\div g(\tau)\d \tau}_{L^q([-1,0]\times \R^3)} \lec_{p,q} \norm{g}_{L^p([-1,0]\times \R^3)}.
\end{align*}

\end{lem}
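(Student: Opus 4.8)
The plan is to exhibit the operator as a parabolic convolution against the gradient of the Gaussian heat kernel, reduce it to a one-dimensional fractional integral in time by Young's inequality in the space variable, and then close with the Hardy--Littlewood--Sobolev inequality. Write $\Phi_s(x) = (4\pi s)^{-3/2} e^{-|x|^2/(4s)}$ for the heat kernel. Since $\na$ commutes with $e^{s\De}$, for $g\in L^p(\R^3)$ and $s>0$ one has $e^{s\De}\div g = (\na\Phi_s)\ast g$ (spatial convolution), so
\begin{align*}
\int_{-1}^t e^{(t-\tau)\De}\div g(\tau,\cdot)\,\d\tau = \int_{-1}^t (\na\Phi_{t-\tau})\ast g(\tau,\cdot)\,\d\tau .
\end{align*}
Thus it suffices to bound, on the slab $[-1,0]\times\R^3$, the parabolic convolution of $g$ with the kernel $K(s,z):=\mathbf 1_{\{s>0\}}\na\Phi_s(z)$.

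First I would record the kernel estimate: the scaling $\Phi_s(x) = s^{-3/2}\Phi_1(x/\sqrt s)$ gives $\na\Phi_s(x) = s^{-2}(\na\Phi_1)(x/\sqrt s)$, whence $\norm{\na\Phi_s}_{L^a(\R^3)}\lec_a s^{-2+\frac{3}{2a}}$ for $a\in[1,\infty]$. Now fix $1<p<q<\infty$ with $\tfrac1q+\tfrac15=\tfrac1p$ and set $\tfrac1a:=1+\tfrac1q-\tfrac1p$, so that $a=5/4$ and $-2+\tfrac{3}{2a}=-\tfrac45$. By Minkowski's integral inequality followed by Young's convolution inequality in $\R^3$,
\begin{align*}
\Norm{\int_{-1}^t (\na\Phi_{t-\tau})\ast g(\tau,\cdot)\,\d\tau}_{L^q_x}
\le \int_{-1}^t \norm{\na\Phi_{t-\tau}}_{L^{5/4}_x}\norm{g(\tau,\cdot)}_{L^p_x}\,\d\tau
\lec \int_{-1}^t (t-\tau)^{-\frac45} F(\tau)\,\d\tau,
\end{align*}
where $F(\tau):=\norm{g(\tau,\cdot)}_{L^p_x}$ satisfies $\norm{F}_{L^p([-1,0])}=\norm{g}_{L^p([-1,0]\times\R^3)}$. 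The right-hand side is a constant times the Riemann--Liouville fractional integral $I_{1/5}F$ of order $1/5$ of $F$ (extended by zero to $\R$). Taking the $L^q$-norm in $t$ and applying the one-dimensional Hardy--Littlewood--Sobolev inequality $\norm{I_{1/5}F}_{L^q(\R)}\lec_{p,q}\norm{F}_{L^p(\R)}$, valid precisely for $1<p<q<\infty$ with $\tfrac1q=\tfrac1p-\tfrac15$ (which forces $q<\infty$), gives
\begin{align*}
\Norm{\int_{-1}^t e^{(t-\tau)\De}\div g(\tau)\,\d\tau}_{L^q([-1,0]\times\R^3)}\lec_{p,q}\norm{g}_{L^p([-1,0]\times\R^3)} .
\end{align*}
Equivalently, from $|K(s,z)|\lec (s^{1/2}+|z|)^{-4}$ one sees $K$ lies in the parabolic weak-Lebesgue space $L^{5/4,\infty}$, and the parabolic weak-type Young inequality yields the same bound for $1<p<q<\infty$.

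This is a standard parabolic smoothing (Sobolev-type) estimate, and the substance is really just bookkeeping the exponents through Young's inequality and HLS; there is no serious obstacle. The only delicate point is the endpoint $p=1$, where $q=5/4$ and $K$ belongs only to weak-$L^{5/4}$, so the argument above produces merely $L^{5/4,\infty}$; the strong bound there is the content of the cited references \cite{JJW16, BP20}, and in any case in this paper the lemma is only ever invoked with $p\ge 3$.
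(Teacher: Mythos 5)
Your proof is correct for $1<p<q<\infty$ and follows the standard route for this parabolic smoothing estimate. The paper itself gives no argument --- it defers to \cite[Appendix D]{JJW16} and \cite[Proposition 11]{BP20} --- but the scheme you use (the kernel bound $\|\na\Phi_s\|_{L^{5/4}_x}\lec s^{-4/5}$, Minkowski and Young in the spatial variable to reduce to a one-dimensional fractional integral of order $1/5$, then Hardy--Littlewood--Sobolev in time) is exactly what those references do, and your exponent bookkeeping is right. The alternative remark via the parabolic weak-type Young inequality with $K\in L^{5/4,\infty}$ is an equally valid route.

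Your caveat at $p=1$ is in fact sharper than you state: the strong $L^1\to L^{5/4}$ bound is genuinely false, not merely beyond the reach of HLS. Taking $g$ to approximate a unit point mass at $(\tau,x)=(-1,0)$ gives $\int_{-1}^{t}e^{(t-\tau)\De}\div g(\tau)\,\d\tau\to\na\Phi_{t+1}(x)$, and
\[
\int_{-1}^{0}\Norm{\na\Phi_{t+1}}_{L^{5/4}(\R^3)}^{5/4}\,\d t=\int_0^1\Norm{\na\Phi_s}_{L^{5/4}(\R^3)}^{5/4}\,\d s\sim\int_0^1 s^{-1}\,\d s=\infty,
\]
so no strong bound can hold there. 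The lemma's hypothesis $1\le p$ should therefore read $1<p$; as you correctly observe, this is immaterial for the paper, which invokes the lemma only with $p\ge 3$.
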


\begin{lem}\label{lem:semigp2} For any $r, p\in [1,\infty]$ satisfying $\frac2r + \frac3p<2$, we have
\begin{align*}
\Norm{\int_{-1}^t e^{(t-\tau)\De} f(\tau)\d \tau}_{C^\ga_{\text{par}}([-1,0]\times \R^3)} 
\lec_{r,p} \norm{f}_{L^r([-1,0];L^p( \R^3))}
\end{align*}
for all $\ga \in (0, \min(1-\frac 1r-\frac 3{2p}, \frac12))$. Also, for any $r, p\in [1,\infty]$ satisfying $\frac2r + \frac3p<1$, we have
\begin{align*}
\Norm{\int_{-1}^t e^{(t-\tau)\De} \div g(\tau)\d \tau}_{C^\ga_{\text{par}}([-1,0]\times \R^3)} 
\lec_{r,p} \norm{g}_{L^r([-1,0];L^p( \R^3))}
\end{align*}
for all $\ga\in (0, \frac12-\frac1r-\frac3{2p})$ .
\end{lem}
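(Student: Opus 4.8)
The plan is to prove both inequalities by hand from Gaussian kernel bounds, using parabolic scaling to split the Duhamel integral in time at the length scale set by the two points under comparison. I would write $e^{s\De}$ as convolution with the heat kernel $G_s(x)=(4\pi s)^{-3/2}e^{-|x|^2/4s}$ and record the scaling bounds $\norm{\na^k G_s}_{L^{p'}(\R^3)}\lesssim_{k,p}s^{-k/2-\frac{3}{2p}}$ for every integer $k\ge0$, which via Young's inequality give $\norm{\na^k e^{s\De}g}_{L^\infty}\lesssim s^{-k/2-\frac{3}{2p}}\norm{g}_{L^p}$ and, since $\pa_t e^{s\De}=\De e^{s\De}$, $\norm{\pa_t e^{s\De}g}_{L^\infty}\lesssim s^{-1-\frac{3}{2p}}\norm{g}_{L^p}$. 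For the divergence form one just moves a derivative onto the kernel, $e^{s\De}\div g=(\na G_s)\ast g$, so that every bound below is repeated with $\frac{3}{2p}$ replaced by $\frac12+\frac{3}{2p}$; that single shift is exactly what contracts the admissible region from $\frac2r+\frac3p<2$ to $\frac2r+\frac3p<1$. So I would carry out the first estimate in detail and only indicate the substitution for the second.

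Set $w(t,x)=\int_{-1}^t e^{(t-\tau)\De}f(\tau,x)\,\d\tau$. The uniform bound is immediate from H\"older in $\tau$: $\norm{w(t)}_{L^\infty}\lesssim\bigl(\int_0^{1+t}\sigma^{-\frac{3}{2p}r'}\,\d\sigma\bigr)^{1/r'}\norm{f}_{L^r_tL^p_x}$, the $\sigma$-integral being finite exactly when $\frac{3}{2p}r'<1$, i.e. $\frac2r+\frac3p<2$; this also records the scaling $\norm{w(t)}_{L^\infty}\lesssim(1+t)^{1-\frac1r-\frac{3}{2p}}\norm{f}$. For the spatial oscillation, at a fixed time $t$ and points $x,y$ with $\rho:=|x-y|\le1$ (the case $\rho>1$ being handled by the uniform bound), I would split $w(t)=w_{\mathrm{near}}(t)+w_{\mathrm{far}}(t)$ at time $t-\rho^2$ (if $t-\rho^2<-1$ the whole integral has length $<\rho^2$ and is treated as the near part), bound the near piece by twice its $L^\infty$-norm and the far piece by $\rho\,\norm{\na w_{\mathrm{far}}(t)}_{L^\infty}$, and apply H\"older in $\tau$ to each; summing gives $|w(t,x)-w(t,y)|\lesssim\rho^{\min(1,\,2-\frac2r-\frac3p)}\norm{f}\lesssim\rho^\gamma\norm{f}$ for every $\gamma$ in the stated range. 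For the temporal oscillation, at a fixed $x$ and times $s<t$, $h:=t-s\le1$, I would write $w(t,x)-w(s,x)=\int_s^t e^{(t-\tau)\De}f\,\d\tau+\int_{-1}^s\!\bigl(e^{(t-\tau)\De}-e^{(s-\tau)\De}\bigr)f\,\d\tau$, estimate the first term like the near piece above, and for the second use $e^{(t-\tau)\De}-e^{(s-\tau)\De}=\int_{s-\tau}^{t-\tau}\De e^{\sigma\De}\,\d\sigma$, splitting the $\tau$-range at $s-h$ and using the crude bound $\norm{(e^{(t-\tau)\De}-e^{(s-\tau)\De})f(\tau)}_{L^\infty}\lesssim(s-\tau)^{-\frac{3}{2p}}\norm{f(\tau)}_{L^p}$ near $\tau=s$ together with $\norm{\De e^{\sigma\De}f(\tau)}_{L^\infty}\lesssim\sigma^{-1-\frac{3}{2p}}\norm{f(\tau)}_{L^p}$ away from it; both contributions come out as $h^{1-\frac1r-\frac{3}{2p}}\norm{f}=(h^{1/2})^{2-\frac2r-\frac3p}\norm{f}\lesssim(h^{1/2})^\gamma\norm{f}$ in the stated range. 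The triangle inequality then gives $|w(t,x)-w(s,y)|\lesssim(|x-y|+|t-s|^{1/2})^\gamma\norm{f}$, i.e. the first inequality; the second follows after replacing $\frac{3}{2p}$ by $\frac12+\frac{3}{2p}$ throughout, so that the uniform bound now needs $\frac2r+\frac3p<1$ and the H\"older range becomes $\gamma<\frac12-\frac1r-\frac{3}{2p}$.

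I do not expect a serious obstacle: this is a standard heat-semigroup estimate, as the references \cite[Appendix D]{JJW16} and \cite[Proposition 11]{BP20} already indicate. The only points calling for a little care are (i) keeping all constants uniform up to the parabolic corner $t=-1$, which is automatic since $w(-1,\cdot)\equiv0$ and every bound above carries the constant $\norm{f}_{L^r_tL^p_x([-1,0]\times\R^3)}$ independent of $t$; and (ii) the borderline subcases where one of the $\sigma$-integrals has exponent exactly $-1$ and produces a logarithm, which is harmlessly absorbed by shrinking $\gamma$ infinitesimally since the claimed range is open. The genuinely load-bearing observation is that the modulus of continuity at scale $\rho$ (respectively $h$) is controlled entirely by the short-time part of the Duhamel integral, of length $\rho^2$ (respectively $h$), where the kernel is singular, while the long-time part is smooth and contributes only a lower-order power; this is also the reason the two parts of the lemma carry the two different ranges of $(r,p)$.
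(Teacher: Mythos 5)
Your proof is correct. Note that the paper itself does not prove this lemma -- it simply cites \cite[Appendix D]{JJW16} and \cite[Proposition 11]{BP20} -- so there is no in-paper argument to compare against; your self-contained derivation (Gaussian kernel bounds $\norm{\na^k G_s}_{L^{p'}}\lesssim s^{-k/2-3/(2p)}$, H\"older in time, and splitting the Duhamel integral at the parabolic scale $\rho^2$ resp.\ $h$, with the near part absorbed in $L^\infty$ and the far part controlled through $\na w$ resp.\ $\De e^{\sigma\De}$) is exactly the standard route those references take. The exponent bookkeeping checks out: the spatial increment comes out as $\rho^{\min(1,\,2-2/r-3/p)}$ and the temporal one as $(h^{1/2})^{2-2/r-3/p}$, both of which dominate $\rho^\ga$, $(h^{1/2})^\ga$ for every $\ga$ in the stated (open) ranges, with the borderline logarithms harmless as you say; the shift $\tfrac{3}{2p}\mapsto\tfrac12+\tfrac{3}{2p}$ for the $\div g$ case reproduces the stronger hypothesis $\tfrac2r+\tfrac3p<1$ and the range $\ga<\tfrac12-\tfrac1r-\tfrac3{2p}$. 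In fact your argument gives a slightly wider H\"older range than the lemma asserts, so nothing is lost.
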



\end{document}